\documentclass[12pt]{amsart}
\usepackage[margin=3cm]{geometry}

\usepackage{graphicx} 
\usepackage{amsmath}
\numberwithin{equation}{section}
\usepackage{amssymb}
\usepackage{amsfonts}
\usepackage{mathtools}
\usepackage{tikz}
\usepackage{pgfplots}
\usepackage{subcaption}

\let\mc=\mathcal
\let\mr=\mathrm
\let\bs=\boldsymbol

\def\ed{\mathrm{d}}
\usepackage{xcolor}
\usepackage[
  colorlinks=true,
  linkcolor=blue,
  citecolor=blue,
  urlcolor=blue
]{hyperref}
\usepackage{amsthm}
\theoremstyle{plain}
\newtheorem{theorem}{Theorem}[section]
\newtheorem{remark}[theorem]{Remark}
\newtheorem{lemma}[theorem]{Lemma}

\newtheorem{corollary}[theorem]{Corollary}
\newtheorem{proposition}[theorem]{Proposition}

\theoremstyle{definition}

\newtheorem{example}[theorem]{Example}

\usepackage{mdframed}[framemethod=TikZ]

\usepackage{array,multirow,makecell}
\setcellgapes{1pt}
\makegapedcells
\newcolumntype{R}[1]{>{\raggedleft\arraybackslash }b{#1}}
\newcolumntype{L}[1]{>{\raggedright\arraybackslash }b{#1}}
\newcolumntype{C}[1]{>{\centering\arraybackslash }b{#1}}

\graphicspath{{}}

\title[Duality and particle approximations for signed Wasserstein barycenters]{Toland duality and particle approximations for signed Wasserstein barycenters}

\author[T.\ O. Gallou\"et]{Thomas O.\ Gallou\"et}
\address{Thomas O. Gallou\"et, Université Paris-Saclay, CNRS, Inria, Laboratoire de mathématiques d’Orsay, ParMA, 91405, Orsay, France}
\email{thomas.gallouet@inria.fr}

\author[A. Natale]{Andrea Natale}
\address{Andrea Natale, Université Paris-Saclay, CNRS, Inria, Laboratoire de mathématiques d’Orsay, ParMA, 91405, Orsay, France}
\email{andrea.natale@inria.fr}

\author[G. Todeschi]{Gabriele Todeschi}
\address{Gabriele Todeschi, Université Claude Bernard Lyon 1, ICJ UMR5208, CNRS, Ecole Centrale de Lyon, INSA Lyon, Université
Jean Monnet, 69622 Villeurbanne, France}
\email{todeschi@math.univ-lyon1.fr}
\begin{document}

\maketitle

\begin{abstract}
We study the problem of minimizing a weighted sum of squared Wasserstein distances with signed coefficients. In the case of a single positive coefficient, we derive two convex dual formulations: one expressed in terms of Brenier potentials, and one as a projection problem in convex order. Such dual formulations arise via Toland duality by exploiting different notions of convexity within the problem. They generalize those recently established for the metric extrapolation problem, which corresponds to signed barycenters between only two measures. For general signed barycenters, we identify necessary optimality conditions that reduce the problem to the case with one positive coefficient. On the numerical side, we propose a grid-free, particle-based algorithm built on entropic regularization and Sinkhorn iterations to compute signed barycenters with arbitrary coefficients.
\end{abstract}

\section{Introduction}
Computing interpolations and extrapolations of measure-valued data is a common task in statistics and numerical analysis. One of the most successful frameworks in this context is that of Wasserstein barycenters, introduced by Agueh and Carlier in \cite{agueh2011barycenters}.  The Wasserstein barycenter of a family of probability measures 
 with finite second moments $\nu_1,\ldots, \nu_N\in\mc{P}_2(\mathbb{R}^d)$ is defined as any minimizer of
\begin{equation}\label{eq:signed}
\inf_\mu \sum_{i=1}^N \lambda_i \frac{W^2_2(\mu,\nu_i)}2 \,,
\end{equation}
where $\lambda_i>0$ are positive weights summing to one. The popularity of this notion mostly lies in its ability to generate interpolations that respect the geometry of the underlying space, while retaining a convex variational structure amenable to computation.

One fundamental limitation of this classical definition is that since the weights are assumed to be positive one cannot use it to define extrapolations. To overcome this restriction, one is naturally led to consider
signed Wasserstein barycenters, i.e., solutions of the same variational problem with weights $\lambda_i\in \mathbb{R}\setminus\{0\}$ 
satisfying only $\sum_i \lambda_i >0$. These arise naturally in many contexts including regression models \cite{petersen2019frechet,dubey2020functional,schotz2021frechet}, reduced order modeling \cite{dalery2023nonlinear}, or the design of time discretizations for gradient flows \cite{Matthes2019bdf2,gallouet2024geodesic,han2023high}. In the latter application, for example, the variational structure of the problem is particularly beneficial as it allows one to derive convergence results that are not accessible via linearization-based approaches \cite{gallouet2024geodesic}.
However, the non-convexity introduced by negative weights means that standard existence, uniqueness, and duality arguments for classical barycenters break down entirely. Consequently, numerical solutions can be reliably computed only in dimension one or, due to recent developments, in some other specific settings \cite{nguyen2026fr,gallouet2025metric}.

The aim of this work is to address these gaps by providing a theoretical framework to characterize the solutions of problem \eqref{eq:signed} with signed weights, and practical numerical tools to approximate them.  

\subsection{Metric extrapolation and convex order} Our starting point is the notion of metric extrapolation introduced in \cite{gallouet2025metric}. Given two measures with finite second moments $\nu_0,\nu_1 \in \mc{P}_2(\mathbb{R}^d)$, this is the unique solution to the following problem:
\begin{equation}\label{eq:extrapolation}
\inf_\mu \left \{ t \frac{W^2_2(\mu,\nu_1)}{2} - (t-1)\frac{W^2_2(\mu,\nu_0)}{2}  \right\}.
\end{equation}
To explain why this is an extrapolation, recall that a (constant-speed, minimizing) Wasserstein geodesic is a curve $\nu :s\in[0,t] \rightarrow \nu(s)$ satisfying
\begin{equation}\label{eq:geodesicd}
W_2(\nu(s_0),\nu(s_1)) = \frac{|s_1-s_0|}{t} W_2(\nu(0),\nu(t))\,, 
 \end{equation}   
 for all $s_0,s_1\in [0,t]$. 
 Using elementary inequalities, it is easy to check that if there exists a globally minimizing geodesic such that $\nu(0)=\nu_0$ and $\nu(1)=\nu_1$, then $\nu(t)$  is  a minimizer of \eqref{eq:extrapolation}; see \cite{gallouet2025metric}. When no such geodesic exist (which is often the case for arbitrary data $\nu_0,\nu_1$) problem \eqref{eq:extrapolation} still has a unique solution, which therefore provides a well-defined notion of geodesic extrapolation.

Remarkably, while problem \eqref{eq:extrapolation} is non convex with respect to the linear structure on probability measures, it admits a convex dual formulation, which can be written as follows:
\begin{equation}\label{eq:dualBrenier} 
\inf \left \{ \int u^* \mathrm{d} \nu_1  + \int u \mathrm{d} \nu_0 ~;~ u - \frac{t-1}{t} \frac{|\cdot|^2}{2} \text{ is proper, convex {and l.s.c.}} \right\},
\end{equation}
where $u^*$ denotes the Fenchel-Legendre transform of $u$.
In particular, given a solution $u$ to this problem, define $\overline{\nu}_0 \coloneqq \nabla u^*_\# \nu_1$ (where the subscript $\#$ denotes the push-forward of measures). Then, $\overline{\nu}_0$ is dominated in convex order by $\nu_0$, denoted $\overline{\nu}_0 \preceq_C \nu_0$, i.e.,
\[
\int \varphi \ed \overline{\nu}_0 \leq \int \varphi \ed \nu_0 \,, \quad \forall \, \varphi:\mathbb{R}^d \rightarrow \mathbb{R} ~~~ \text{convex}\,.
\]
Moreover there exists a unique $W_2$-geodesic $\overline{\nu}:[0,t] \rightarrow \mc{P}_2(\mathbb{R}^d)$ such that $\overline{\nu}(0) = \overline{\nu}_0$,  $\overline{\nu}(1) = \nu_1$, and  $\overline{\nu}(t)$ solves \eqref{eq:extrapolation}. Finally, the measure $\overline{\nu}_0$ can be characterized explicitly as a $W_2$-projection in convex order \cite{kim2026stability}, i.e., it is the unique solution of
\begin{equation}\label{eq:dualprojection}
\inf \left\{ W^2_2 (D^\theta_\# \nu_1, \overline{\nu}_0) ~;~  \overline{\nu}_0 \preceq_C \nu_0 \right\}\,,
\end{equation}
where $D^\theta(x) \coloneqq  \theta x$ and $\theta = t/(t-1)$.

\subsection{Related works and contributions} In this work we 
generalize the results of \cite{gallouet2025metric} to signed barycenters with more than two measures. Specifically, we show that if one single coefficient $\lambda_i$ is positive and the rest are negative, problem \eqref{eq:signed} admits a convex dual formulation in terms of Brenier potentials analogous to \eqref{eq:dualBrenier} and a convex dual formulation as projection in convex order analogous to \eqref{eq:dualprojection}. Moreover, the latter can also be expressed as a weak Optimal Transport problem \cite{gozlan2017kantorovich,gozlan2020mixture} (a  generalization of Optimal Transport in which the cost depends nonlinearly on the plan $\gamma$) involving multiple plans, which makes the problem tractable numerically.

While the general case with multiple positive coefficients lacks the same underlying convex structure, we identify necessary conditions that characterize the solutions in terms of barycenters with one positive coefficient. These allow us to construct a counterexample to both uniqueness and convexity of the minimizers of the problem which involves measures that are all absolutely continuous. This addresses a question from \cite{jacobs2026signed}, where the authors proved uniqueness for absolutely continuous data, upon assuming that the solution of the problem can be constructed from a saddle point of a certain dual formulation, which therefore does not seem to hold in general. 

Note that other recent works have addressed the signed barycenter problem \cite{tornabene2024generalized,jacobs2026signed}. However, they do not develop the same duality theory as we do here, and do not identify the link between signed barycenters and convex order. Furthermore, the proofs provided in this work to establish the dual formulations of \eqref{eq:barycenter} are more direct than those we provided in \cite{gallouet2025metric} for the case $N=2$, and clarify the structure of the problem for this case as well. 

Here, our approach relies on reformulating the problem in terms of random variables (a classical approach in Optimal Transport that is sometimes referred to as Lions's lift), adopting the perspective already used in \cite{ekeland2014optimal,bertucci2024approximation,beiglbock2025brenier,pinzi2025totally} for related problems. The main observation we will use is that minimizing a difference of 2-Wasserstein distances squared is a equivalent to computing a Legendre transform after the lift; see Lemma \ref{lem:Fstar} below. This fact is a particular instance of a more general result due to Beiglb\"ock, Pammer and Schrott \cite{beiglbock2025brenier} stating that $c$-convexity with respect to the maximum covariance function (defined in equation \eqref{eq:mcov}) is effectively equivalent to standard convexity after the lift.  The equivalence of the metric extrapolation problem with a convex problem after the lift was also noticed in \cite{bertucci2024approximation}, and \cite[Remark 5.3]{bourne2026semidiscrete} in the semi-discrete setting. 

We use this point of view to derive dual formulations of the signed barycenter problem, using 
Toland duality \cite{toland1978duality}, a general principle for differences of convex functions, as well as classical minimax arguments. In fact, we show that the different dual formulations we derive correspond to different way of applying Toland duality, using either the linear structure on probability measures or the linear structure on random variables obtained after the lift; see Section \ref{sec:toland}. We note that there is a strong analogy between our dual formulations and those corresponding to the so-called Bass problem  \cite{backhoff2024gradient,BackhoffVeraguas2025Bass}, which in one of its reformulations is also equivalent to the minimization of a difference of squared Wasserstein distances, but where the unknown $\mu$ is convolved with a standard Gaussian in the positive term.

From a numerical perspective, we generalize the numerical approach developed in \cite{gallouet2025metric} to the case of multiple negative coefficients. While this relies on entropic regularization, it is a grid-free method in the sense that the support of the solution is given by a set of particles whose positions are the main unknowns. Building on this scheme, we propose a numerical algorithm for  computing approximate solutions to general signed barycenters. Finally,  we show how Toland duality connects our framework to optimal quantization and the computation of particle solutions of barycenters with positive coefficients. 

\subsection{Structure of the article} The rest of the article is structured as follows. In Section \ref{sec:mcov} we review the problem of minimizing the difference of $W_2^2$ costs, highlighting its link with convex order. This is used in Section \ref{sec:kto1} to study the signed barycenter problem with one positive coefficient, and derive different convex dual formulations. In Section \ref{sec:linearized} we connect this problem to linearized OT barycenters. In Section \ref{sec:general}, we study signed barycenters with arbitrary coefficients, deriving necessary optimality conditions and a saddle point formulation of the problem in terms of convex order conditions.  We discuss the computation of signed barycenters in Section \ref{sec:numerical}.  

\section{Maximum covariance and convex order}
\label{sec:mcov}
In this section we discuss the minimization of a difference of squared 2-Wasserstein distances. Specifically, given $\nu_0,\nu_1\in\mc{P}_2(\mathbb{R}^d)$ we study the problem
\begin{equation}\label{eq:diff}
\inf_\mu \left \{ \frac{W^2_2(\mu,\nu_1)}{2} - \frac{W^2_2(\mu,\nu_0)}{2}  \right\}\,,
\end{equation}
where 
 \[
 W^2_2(\mu,\nu) \coloneqq \inf \left\{ \int |x-y|^2 \ed \gamma(x,y) ~ ; ~\gamma \in \Gamma(\mu,\nu) \right\}\,,
 \]
and $\Gamma(\mu,\nu)$ denotes the set of couplings with fixed marginals equal to $\mu$ and $\nu$.
This can be regarded as a degenerate signed Wasserstein barycenter since the sum of the coefficients is exactly zero, but it will also be an essential first step to understand problem \eqref{eq:signed}. Problem \eqref{eq:diff} was originally studied by \cite{carlier2008Toland} (although on the set of probability measures on a compact subset of $\mathbb{R}^d$) and it is intimately connected with the notion of convex order. The main objective of this section is to explain such a connection, collecting results from \cite{carlier2008Toland,ekeland2014optimal,bertucci2024approximation,beiglbock2025brenier}, and describe the main tools and notation used in the rest of this work.

Throughout the rest of the paper, we will always work with a fixed atomless, separable probability space $(\Omega, \mathcal{F}, P)$, ensuring that $ L^2(\Omega, \mc{F}, P; \mathbb{R}^d)$ is a separable Hilbert space and that any measure  $\mu \in \mc{P}_2(\mathbb{R}^d)$ can be represented as the law of $X \in L^2(\Omega, \mc{F}, P; \mathbb{R}^d)$. We will denote this either by $X\sim \mu$ or $X_\# P = \mu$, where the subscript $\#$  denotes the pushforward of measures.

Let us start by recalling the definition of the maximum covariance function:
\begin{equation}\label{eq:mcov}
\mr{MCov}(\mu,\nu) = \sup\left\{ \mathbb{E}[\langle X,Y \rangle] ~ ; ~ X \sim \mu \,, ~Y \sim \nu \right\}\,.
\end{equation}
This is linked to $W_2$ by the following relation
\[
\frac{W^2_2(\mu,\nu)}{2} = \frac{M_2(\mu)}{2} + \frac{M_2(\nu)}{2} - \mr{MCov}(\mu,\nu)\,,
\]
where $M_2(\mu) \coloneqq \int |x|^2 \ed \mu(x)$ denotes the second moment of $\mu$.
Clearly, then,  minimizing \eqref{eq:diff} is equivalent to maximizing a difference of maximum covariance functions:
\begin{equation}\label{eq:diffmcov}
\inf_\mu \{\mr{MCov}(\mu,\nu_0) - \mr{MCov}(\mu,\nu_1)\}\,.
\end{equation}

The next step is to observe that while the definition of $\mr{MCov}$ involves a maximization over two random variables, one can always fix one of the two,  e.g., $X \sim \mu$, and perform the maximization only over all $Y \sim  \nu$; see for instance Lemma 2.1 in \cite{bertucci2024approximation}.
(Note that this is generally true for any optimal transport problem with an upper semi-continuous cost function  $c:\mathbb{R}^d\times \mathbb{R}^d \rightarrow \mathbb{R}$). 
Because of this, we can set
\begin{equation}\label{eq:Fnu}
F_\nu(X) \coloneqq \mathrm{MCov}(X_\# P, \nu)= \sup\left\{ \mathbb{E}[\langle X,Y \rangle] ~ ; ~ Y \sim \nu \right\}\,.
\end{equation}
We observe that by construction $F_\nu$ is convex and weakly l.s.c.\ as the supremum of linear functions. It is moreover Lipschitz continuous for the strong $L^2$ topology since $\nu\in\mc{P}_2(\mathbb{R}^d)$. 

\begin{remark}
In the language of \cite{pinzi2025totally}, equation \eqref{eq:Fnu} states that the functional $\mu \mapsto \operatorname{MCov}(\mu,\nu)$ is totally convex.
\end{remark}

Lemma \ref{lem:Fstar} below says that \eqref{eq:diff} amounts to computing the Legendre transform of $F_\nu$. This statement is a direct consequence of Proposition 1.9 in \cite{beiglbock2025brenier} (see also Exemple 2.10). We give here a proof that relies on the so-called semi-dual formulation of the maximum covariance function (see also \cite{bourne2026semidiscrete} for a different argument using Strassen's Theorem \cite{strassen1965existence}): 
\begin{multline}\label{eq:mcovsd}
\mr{MCov}(\mu,\nu) = \inf\left\{ \int u \ed \mu + \int u^* \ed \nu ~ ;~ \right. \\ \left. u:\mathbb{R}^d\rightarrow \mathbb{R}\cup\{+\infty\}  \text{ proper, convex and l.s.c.} \right \}\,.
\end{multline}
Note that for any $\mu,\nu \in \mc{P}_2(\mathbb{R}^d)$, problem \eqref{eq:mcovsd} always admits a solution \cite{villani2009optimal}, which we refer to as Brenier potential from $\mu$ to $\nu$. 

\begin{lemma}\label{lem:Fstar} For all $Y \in L^2$,
\begin{equation}\label{eq:Fstar}
F_{\nu_0}^*(Y) \coloneqq \sup_X  \{ \langle X,Y\rangle_{L^2} - F_{\nu_0}(X)\} = \sup_\mu \left\{\mr{MCov}(\mu,\nu_1) -\mr{MCov}(\mu,\nu_0) \right\}\,,
\end{equation}
where $\nu_1 = Y_\# P$. Moreover,
\begin{equation}\label{eq:Fnu0star}
F^*_{\nu_0}(Y) = \left\{ \begin{array}{ll} 0 & \text{if } Y_\# P \preceq_C \nu_0\,,\\
+\infty & \text{otherwise.} 
\end{array}\right.
\end{equation}
\end{lemma}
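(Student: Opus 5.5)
The plan has two parts. First I prove the identity \eqref{eq:Fstar} directly from the definitions, using that $F_{\nu_0}(X)$ depends only on the law of $X$. Then I prove \eqref{eq:Fnu0star} from the semi-dual formulation \eqref{eq:mcovsd}.

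\emph{First identity.} Fix $Y\sim\nu_1$ and group the supremum over $X$ by the law $\mu=X_\#P$:
\[
F^*_{\nu_0}(Y)=\sup_\mu \Big\{\sup_{X\sim\mu}\mathbb{E}[\langle X,Y\rangle] - \mr{MCov}(\mu,\nu_0)\Big\}.
\]
Since $Y$ is now fixed, the inner supremum equals $\mr{MCov}(\mu,\nu_1)$. This is exactly the ``fix one variable'' property recalled before \eqref{eq:Fnu} (Lemma 2.1 in \cite{bertucci2024approximation}), which needs the atomless probability space. This proves \eqref{eq:Fstar}.

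\emph{Upper bound when $\nu_1\preceq_C\nu_0$.} By \eqref{eq:mcovsd}, for every $\mu$ and every proper convex l.s.c.\ $u$,
\[
\mr{MCov}(\mu,\nu_1)\le \int u\,\ed\mu+\int u^*\,\ed\nu_1 .
\]
Choose $u$ to be a Brenier potential from $\mu$ to $\nu_0$. Then $u^*$ is convex, so convex order gives $\int u^*\ed\nu_1\le\int u^*\ed\nu_0$. Hence
\[
\mr{MCov}(\mu,\nu_1)\le \int u\,\ed\mu+\int u^*\,\ed\nu_0=\mr{MCov}(\mu,\nu_0).
\]
A technical point: $u^*$ may take the value $+\infty$ or fail to be $\nu_0$-integrable. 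It is finite $\nu_0$-a.e.\ with integrable positive part, and convex order extends to such functions by approximating $u^*$ from below with finite convex functions (suprema of finitely many affine minorants) and using monotone convergence. So the supremum in \eqref{eq:Fstar} is $\le 0$. Taking $\mu=\delta_0$ gives $0-0=0$, so $F^*_{\nu_0}(Y)=0$.

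\emph{Value $+\infty$ when $\nu_1\not\preceq_C\nu_0$.} This is the step I expect to be the main obstacle, namely building a witness from a failing convex test function. There is a convex $\varphi$ with $\int\varphi\,\ed\nu_1>\int\varphi\,\ed\nu_0$. Since both measures have finite second moments, I can reduce to $\varphi$ finite, convex and Lipschitz (a supremum of finitely many affine functions), by the same monotone approximation: if the inequality fails for all such $\varphi$, it fails in the limit. Set $u=\varphi^*$, which is proper, convex and l.s.c.\ with compact effective domain, and choose $\mu=\nabla\varphi_\#\nu_1$. This is compactly supported, hence in $\mc{P}_2$; because $\varphi$ is only convex, $\nabla\varphi$ is defined $\nu_1$-a.e.\ only after a measurable selection of $\partial\varphi$. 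Equality in Fenchel--Young on the support of the coupling $(\mathrm{id},\nabla\varphi)_\#\nu_1$ gives
\[
\mr{MCov}(\mu,\nu_1)\ge\int \langle y,\nabla\varphi(y)\rangle\,\ed\nu_1(y)=\int u\,\ed\mu+\int\varphi\,\ed\nu_1 .
\]
On the other hand, \eqref{eq:mcovsd} with the same $u$ gives $\mr{MCov}(\mu,\nu_0)\le\int u\,\ed\mu+\int\varphi\,\ed\nu_0$. Subtracting, the difference is at least $\int\varphi\,\ed(\nu_1-\nu_0)=:c>0$.

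Finally, the functional is positively 1-homogeneous: replacing $\varphi$ by $s\varphi$ for $s>0$ (equivalently, rescaling $\mu$ by $s$) multiplies $\mr{MCov}(\cdot,\nu_1)-\mr{MCov}(\cdot,\nu_0)$ by $s$. Letting $s\to\infty$ gives $F^*_{\nu_0}(Y)=+\infty$, which completes \eqref{eq:Fnu0star}.
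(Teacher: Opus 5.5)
Your proof is correct. The first identity is obtained exactly as in the paper, but for the characterization of $F^*_{\nu_0}$ you take a different route. The paper does not split into cases. It inserts the semi-dual formula for $F_{\nu_0}(X)$, exchanges the supremum over $X$ with the supremum over potentials $u$, and moves the supremum over $X$ inside the integral by measurable selection. Using $u^{**}=u$ it then gets
\[
F^*_{\nu_0}(Y)=\sup_u\Big\{\int u\,\ed\nu_1-\int u\,\ed\nu_0\Big\},
\]
the support function of the cone of convex functions, which is $0$ or $+\infty$ according to whether $\nu_1\preceq_C\nu_0$.

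You instead prove the two bounds separately with explicit witnesses:
\begin{enumerate}
\item For the upper bound, weak duality with the Brenier potential of $(\mu,\nu_0)$, used as a competitor for $(\mu,\nu_1)$, gives $\mr{MCov}(\mu,\nu_1)\le\mr{MCov}(\mu,\nu_0)$ under convex order.
\item For the value $+\infty$, you take $\mu=\nabla\varphi_\#\nu_1$ for a polyhedral violating $\varphi$ and rescale.
\end{enumerate}
Your $\mu$ is exactly what the paper's measurable selection produces, since the pointwise maximizer of $x\mapsto\langle x,Y(\omega)\rangle-\varphi^*(x)$ lies in $\partial\varphi(Y(\omega))$. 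So your lower-bound construction is the explicit form of the paper's selection step.

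Your version is more elementary. It also treats explicitly points the paper passes over quickly:
\begin{itemize}
\item the restriction to $u\in\mc{L}^1(\nu_0)$;
\item the extension of convex order to extended-valued l.s.c.\ convex functions;
\item the final scaling step giving the $0/+\infty$ dichotomy.
\end{itemize}
For your monotone approximation you need $u^*$ to be a \emph{countable} supremum of affine functions; this is standard on $\mathbb{R}^d$ but worth stating.

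The paper's version is shorter. It also displays the computation as an instance of Toland duality on the linear structure of $\mc{P}_2(\mathbb{R}^d)$, which is the viewpoint of Section~\ref{sec:toland} and the pattern reused to derive \eqref{eq:dualBreniermm}.
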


\begin{proof} Equation \eqref{eq:Fstar} is a direct consequence of the considerations above. In fact,
\[
\begin{aligned}
\sup_\mu \left\{\mr{MCov}(\mu,\nu_1) -\mr{MCov}(\mu,\nu_0) \right\} &= \sup_\mu  \sup_{X \sim \mu} \left\{\langle X,Y\rangle_{L^2}  -\mr{MCov}(\mu,\nu_0) \right\} \\
&= \sup_X \left\{ \langle X,Y\rangle_{L^2}  -\mr{MCov}(X_\# P,\nu_0) \right\}\,.
\end{aligned}
\]
For the second point, replacing $F_{\nu_0}(X)$ with the semi-dual formulation we obtain
\begin{equation}\label{eq:supsupF}
F^*_{\nu_0}(Y) = \sup_X \sup_u \left\{ \langle X,Y\rangle_{L^2} - \int u^*(X) \ed P - \int u \ed \nu_0 \right\}
\end{equation}
with $u$ convex, proper and l.s.c. At this point we can exchange the two suprema (upon assuming $ u \in \mc{L}^1(\nu_0)$), and using the measurable selection theorem \cite[Theorem 6.9.13]{bogachev2007measure}
we can pass the maximation over $X$ inside the integral (see \cite[Lemma B.2]{aubin2026debiasing}), leading to
\[
F^*_{\nu_0}(Y) =  \sup_u \left\{ \int \sup_x\{ \langle x,Y(\omega)\rangle - u^*(x)\}  \ed P(\omega)- \int u \ed \nu_0 \right\}\,.
\]
Since $u^{**} = u$, this gives
\[
F^*_{\nu_0}(Y) = \sup_u \left\{ \int u(Y) \ed P - \int u \ed \nu_0 ~;~ u \text{ proper, convex and l.s.c., } u \in \mc{L}^1(\nu_0)\right\}\,.
\]
\end{proof}

Since $F_\nu$ is convex and continuous, $F^{**}_\nu = F_\nu$. In particular, from Lemma \ref{lem:Fstar} we immediately obtain the following alternative defition of $\mr{MCov}$ (and therefore also of $W_2^2$, up to adding back the second moments):

\begin{lemma}\label{lem:mcov} For all $X\in L^2$ and $\nu \in \mc{P}_2(\mathbb{R}^d)$,
\[
F_\nu(X)=\mr{MCov}(X_\#P,\nu) = \sup_Y\{\mathbb{E}[\langle X,Y\rangle]~,~Y_\#P \preceq_C \nu\}\,,
\]
or equivalenlty for all $\mu,\nu \in \mc{P}_2(\mathbb{R}^d)$,
\[
\mr{MCov}(\mu,\nu) = \sup\left\{ \int\langle x,y\rangle\ed \gamma(x,y) ~ ; ~\gamma \in \Gamma(\mu,\bar{\nu}) \,,~ \bar{\nu} \preceq_C \nu \right\}\,.
\]
\end{lemma}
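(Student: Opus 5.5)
The plan is to read the first identity off the Fenchel–Moreau theorem in the Hilbert space $L^2 = L^2(\Omega,\mc{F},P;\mathbb{R}^d)$, using the explicit form of $F_\nu^*$ from Lemma \ref{lem:Fstar}. Then I will translate the random-variable statement into one about couplings.

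First I will check that Fenchel–Moreau applies. $F_\nu$ is finite on all of $L^2$, since $|\mathbb{E}\langle X,Y\rangle|\le \|X\|_{L^2}M_2(\nu)^{1/2}$. It is convex and strongly continuous (indeed Lipschitz), as noted after \eqref{eq:Fnu}. Hence $F_\nu = F_\nu^{**}$, and so
\[
F_\nu(X) = \sup_{Y\in L^2}\{\langle X,Y\rangle_{L^2} - F_\nu^*(Y)\}.
\]
By \eqref{eq:Fnu0star} (with $\nu_0=\nu$), $F_\nu^*$ is the indicator of the set $\{Y : Y_\#P\preceq_C \nu\}$. Substituting gives exactly $F_\nu(X)=\sup\{\mathbb{E}[\langle X,Y\rangle] ; Y_\#P\preceq_C\nu\}$. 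The identity $F_\nu(X)=\mr{MCov}(X_\#P,\nu)$ is just the definition \eqref{eq:Fnu}, which relies on the fact recalled before it that one variable may be fixed.

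For the coupling formulation, fix $\mu,\nu$ and pick any $X\sim\mu$, which is possible because $\Omega$ is atomless. Each admissible $Y$ gives $\gamma=(X,Y)_\#P\in\Gamma(\mu,\bar\nu)$ with $\bar\nu=Y_\#P\preceq_C\nu$, and $\int\langle x,y\rangle\ed\gamma=\mathbb{E}\langle X,Y\rangle$. So the random-variable supremum is at most the coupling supremum.

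For the reverse inequality, take $\gamma\in\Gamma(\mu,\bar\nu)$ with $\bar\nu\preceq_C\nu$. I need $(X',Y')\sim\gamma$ on $\Omega$. Then $\int\langle x,y\rangle \ed\gamma = \mathbb{E}\langle X',Y'\rangle \le F_\nu(X') = \mr{MCov}(\mu,\nu)$, where the last equality holds because $F_\nu(X')$ depends only on the law of $X'$ (it equals $\mr{MCov}(X'_\#P,\nu)$). The joint realization exists because $(\Omega,\mc F,P)$ is atomless and standard, so any law on $\mathbb{R}^{2d}$ is the law of some random vector.

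The main obstacle is bookkeeping rather than analysis: making sure Lemma \ref{lem:Fstar}'s characterization of $F^*$ holds for every $Y\in L^2$. That lemma is stated for all $Y$, so it can be invoked directly.
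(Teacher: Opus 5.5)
Your proposal is correct and follows the paper's route: Fenchel--Moreau for the convex, continuous $F_\nu$ on $L^2$, combined with the indicator form of $F_\nu^*$ from Lemma \ref{lem:Fstar}. You also write out the coupling reformulation that the paper leaves as an implicit ``equivalently''; for that step, realizing $\gamma$ on $\Omega$ needs only that $(\Omega,\mc{F},P)$ is atomless (this already provides a uniform random variable), so you do not need to assume the space is standard.
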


\subsection{Connection with Toland duality}\label{sec:toland} Given a Hilbert space $H$ and two proper, convex and l.s.c.\ functions $f,g: H \rightarrow (-\infty,+\infty]$, consider the minimization problem 
\begin{equation}\label{eq:fg}
\inf_{x\in \mathrm{dom}(f)} \{f(x) - g(x)\}\,. 
\end{equation}
Since $g = g^{**}$, we have that this is equal to 
\begin{equation}\label{eq:infinf}
\inf_{x\in \mathrm{dom}(f)} \inf_{y\in \mr{dom}(g^*)} \{f(x) - \langle x, y\rangle + g^*(y)\} \,,
\end{equation}
and swapping the two infima we get
\begin{equation}\label{eq:gsfs}
 \inf_{y\in \mr{dom}(g^*)} \{ g^*(y) - f^*(y)\}\,.
\end{equation}
This dual formulation is due to Toland \cite{toland1978duality}, and we refer to it as Toland dual of problem \eqref{eq:fg}.  
Note that the argument above does not require $f$ to be convex l.s.c.; if $f$ is also convex l.s.c., however, \eqref{eq:fg} is also the Toland dual of \eqref{eq:gsfs}. 

The idea of using Toland duality for problem \eqref{eq:diff}, or equivalently \eqref{eq:diffmcov}, was already put forward in \cite{carlier2008Toland}, where $f$ and $g$ are replaced by the functions $\mu \mapsto -\mr{MCov}(\mu,\nu_i)$, $i=0,1$, which are convex with respect to the linear structure on $\mc{P}_2(\mathbb{R}^d)$. Adopting this point of view, the equivalent of \eqref{eq:infinf} can be obtained by replacing $\mr{MCov}(\mu,\nu_0)$ in problem \eqref{eq:diffmcov} with the semi-dual formulation \eqref{eq:mcovsd}. This corresponds precisely to the calculations in the proof of Lemma \ref{lem:Fstar}, and in particular equation \eqref{eq:supsupF}.

There is however a second way to use Toland duality for problem \eqref{eq:diffmcov}. In fact, using equation \eqref{eq:Fnu}, we can also write
\[
\inf_\mu \{\mr{MCov}(\mu,\nu_0) - \mr{MCov}(\mu,\nu_1) \} = \inf_X \{ F_{\nu_0}(X) - F_{\nu_1}(X)\} \,.
\]
which is still a difference of convex functions but on $L^2$.
Then, by Toland duality,
\[
\inf_\mu \{\mr{MCov}(\mu,\nu_0) - \mr{MCov}(\mu,\nu_1) \} = \inf_Y \{ F^*_{\nu_1}(Y)-F^*_{\nu_0}(Y)~;~ Y \in \mr{dom}(F^*_{\nu_1})\}\,.
\]
Combining this with equation \eqref{eq:Fnu0star} directly implies \eqref{eq:Fstar}. As shown in the following sections, while for problem \eqref{eq:diff} these two ways of applying Toland duality give the same result, in the case of signed barycenters with different coefficients they will yield different dual formulations.

\section{Signed barycenters with one positive coefficient} \label{sec:kto1}

In this section, we consider the signed barycenter problem with one positive coefficient. For given $\nu_1 \in \mc{P}_2(\mathbb{R}^d)$ and $\nu_0^1, \ldots, \nu_0^K \in \mc{P}_2(\mathbb{R}^d)$, we consider the following problem:
\begin{equation}\label{eq:Kto1} \tag{$\mc{P}$}
\inf_\mu \left\{ t \frac{W^2_2(\mu,\nu_1)}{2} - (t-1) \sum_{k=1}^K \lambda_0^k \frac{W^2_2(\mu,\nu_0^k)}{2} \right\},
\end{equation}
where $t>1$ and $\lambda^k_0>0$ are such that $\sum_k \lambda^k_0 = 1$. Any signed barycenter with one positive coefficient can be written in this way. We write the coefficients in this form in analogy with the extrapolation problem, with which this problem shares many features. For this reason we will also refer to \eqref{eq:Kto1} as $K$-to-1 extrapolation.

\subsection{Existence and uniqueness}
The first step to study \eqref{eq:Kto1} is to rewrite as a minimization over random variables rather than measures. This gives the following existence and uniqueness result:

\begin{lemma}\label{lem:Kto1existence} Problem \eqref{eq:Kto1} admits a unique solution $\mu \in \mc{P}_2(\mathbb{R}^d)$. Moreover, $\mu = X_\#P$ where $X$ is the unique solution of 
\begin{equation}\label{eq:Kto1X}
\inf_{X} \left\{ \frac{1}{2} \|X\|^2_{L^2} -t \langle X,Y\rangle_{L^2}  + (t-1) \sum_{k}\lambda_0^k F_{k}(X) \right\}\,,
\end{equation}
where $Y$ is any random variable such that $ Y \sim \nu_1$, and $F_k \coloneqq F_{\nu_0^k}$.
\end{lemma}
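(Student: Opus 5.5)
Expanding each $W_2^2$ via the maximum covariance identity turns \eqref{eq:Kto1} into an inf over $\mu$ of
\[
\frac{t-(t-1)}{2}M_2(\mu) - t\,\mr{MCov}(\mu,\nu_1) + (t-1)\sum_k\lambda_0^k\,\mr{MCov}(\mu,\nu_0^k) + C,
\]
where $C$ collects the second moments of the data. Since $\sum_k\lambda_0^k=1$, the coefficient of $M_2(\mu)$ is exactly $\tfrac12$. The plan is to lift this to $L^2$, so that the minimization becomes that of a strongly convex functional on a Hilbert space.

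Fix $Y\sim\nu_1$. For $X\sim\mu$ we have $\mr{MCov}(\mu,\nu_1)\ge \langle X,Y\rangle_{L^2}$, with equality for a suitable $X\sim\mu$: as recalled before \eqref{eq:Fnu}, we may fix one variable and optimize over the other, here fixing $Y$ and optimizing over $X\sim\mu$. Moreover $\mr{MCov}(\mu,\nu_0^k)=F_k(X)$ for every $X\sim\mu$, and $M_2(\mu)=\|X\|^2_{L^2}$. Hence the value of \eqref{eq:Kto1} at $\mu$ equals, up to $C$, the infimum over $X\sim\mu$ of the objective $J(X)$ of \eqref{eq:Kto1X}. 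Taking the infimum over $\mu$, the two problems have the same value. Furthermore, $\mu$ is optimal for \eqref{eq:Kto1} exactly when some $X\sim\mu$ minimizes \eqref{eq:Kto1X}, and in that case the pair $(X,Y)$ is an optimal coupling.

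The functional $J$ is $\tfrac12\|X\|^2$ plus a continuous linear term plus a nonnegative combination of the convex, strongly continuous functions $F_k$. It is therefore $1$-strongly convex, continuous and coercive on $L^2$, so it has a unique minimizer $X$ (by weak l.s.c.\ and coercivity for existence, and strong convexity for uniqueness). Then $\mu=X_\#P$ solves \eqref{eq:Kto1}, which gives existence.

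\textbf{Main obstacle: uniqueness of $\mu$.} Suppose $\mu'$ is another minimizer of \eqref{eq:Kto1}. By the lifting step above, some $X'\sim\mu'$ minimizes $J$ for the same fixed $Y$. This requires the existence of an optimal coupling between $\mu'$ and $\nu_1$ realized with $Y$ fixed. For that I would invoke the cited fact that the supremum in $\mr{MCov}$ is attained with one variable fixed, on an atomless space; the step needs care when $\nu_1$ is not atomless, but Lemma 2.1 of \cite{bertucci2024approximation} is stated for general laws. Uniqueness of the minimizer of $J$ then gives $X'=X$, hence $\mu'=\mu$. The same argument shows that the result does not depend on the choice of $Y\sim\nu_1$: every choice produces a minimizer whose law solves \eqref{eq:Kto1}, and that law is unique.
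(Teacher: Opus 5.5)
Your route is the paper's: lift to $L^2$ with $Y\sim\nu_1$ fixed, observe that the objective $J$ of \eqref{eq:Kto1X} is $1$-strongly convex, continuous and coercive, and apply the direct method. Two parts of your argument use only the \emph{value} of the supremum with one variable fixed, and they are fine. The first is the identity between $\inf_{X\sim\mu}J(X)$ and the objective of \eqref{eq:Kto1} at $\mu$ (up to $C$). The second is the implication ``$X$ minimizes $J$ $\Rightarrow$ $X_\#P$ solves \eqref{eq:Kto1}''. The gap is at the step you flagged. For uniqueness of $\mu$ you need, for the prescribed $Y$, some $X'\sim\mu'$ with $(X',Y)_\#P$ optimal. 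You justify this by claiming that the supremum in $\mr{MCov}$ is attained with one variable fixed. What is recalled before \eqref{eq:Fnu} is only an equality of values, and attainment is false in general. The reason is not atoms of $\nu_1$; atoms are harmless, since $P$ restricted to $\{Y=y\}$ is atomless and can carry any conditional law. Take $\Omega=[0,1]$ with Lebesgue measure, $d=2$, $Y(\omega)=(\omega,0)$ and $\mu'$ uniform on $[0,1]^2$. Every $X\in L^2$ is a.s.\ a Borel function of $Y$, while every optimal coupling of $\nu_1$ and $\mu'$ forces $X_1=Y_1$. So $X_2$ would be a function of $X_1$, the law of $X$ would be concentrated on a Lebesgue-null graph, and $X\sim\mu'$ is impossible. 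For the actual minimizers attainment does hold, since by Theorem \ref{th:duality} they are images of $\nu_1$ under the Lipschitz map $\nabla\bar{u}$. But that result comes later and builds on this lemma.

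The repair is to choose $Y$ after the two candidates rather than before. Suppose $\mu$ and $\mu'$ both solve \eqref{eq:Kto1}. Glue some $\gamma\in\Gamma^{\mr{opt}}(\nu_1,\mu)$ and $\gamma'\in\Gamma^{\mr{opt}}(\nu_1,\mu')$ along their common marginal $\nu_1$. Then realize the glued law on $(\mathbb{R}^d)^3$ as the law of a triple $(Y,X,X')$ on $\Omega$, which is possible because $\Omega$ is atomless. With $J$ built on this $Y$, both couplings are optimal, so $J(X)=J(X')=\inf J$. Strict convexity then gives $X=X'$ a.s.\ and $\mu=\mu'$. Combined with the implication you already have, this shows that for \emph{every} $Y\sim\nu_1$ the unique minimizer of \eqref{eq:Kto1X} has law $\mu$. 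It also settles, a posteriori, the ``only if'' half of your claim that $\mu$ is optimal exactly when some $X\sim\mu$ minimizes $J$. The paper's two-line proof passes over this step from uniqueness of $X$ to uniqueness of $\mu$ in silence. With the gluing argument, your write-up becomes a complete version of it.
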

\begin{proof} Equation \eqref{eq:Kto1X} can be derived by the same arguments as in Lemma \ref{lem:Fstar}. Existence and uniqueness for \eqref{eq:Kto1X} follows from the direct method since the objective is strongly convex and weakly l.s.c., as discussed in Section \ref{sec:mcov}.
\end{proof}
\begin{remark} Note that  the solution $X$ of problem  \eqref{eq:Kto1X} may be different for any fixed $Y \sim \nu_1$. However the law of $X$ is always the unique solution of problem \eqref{eq:Kto1}. Moreover, the plan $(X,Y)_\# P$ is always optimal for the transport from $\mu$ to $\nu_1$.
\end{remark}

\subsection{Dual formulation with Brenier potentials} Just as in Lemma \ref{lem:Fstar} we can use the semi-dual formulation of $\mr{MCov}$ to derive a dual formulation for problem \eqref{eq:Kto1} in terms of Brenier potentials. We note that following \cite{carlier2008Toland,gallouet2025metric} this could also be derived as an instance of Toland duality by viewing \eqref{eq:Kto1} as a difference of convex functions with respect to the linear structure on $\mc{P}_2(\mathbb{R}^d)$ (see Section \ref{sec:toland}).

Replacing all the $F_k$ in \eqref{eq:Kto1X} with the corresponding semi-dual formulation, we get (up to an additive constant)
\begin{equation}\label{eq:xw}
\inf_X \inf_{w_k} \left\{ \frac{1}{2} \|X\|^2_{L^2} - t \langle X,Y\rangle_{L^2}  +  (t-1)\sum_k \lambda^k_0\left( \int w_k^*(X)\ed P + \sum_{k} \int w_k \ed \nu_0^k  \right)\right\}\,.
\end{equation}

In order to derive an equivalent of \eqref{eq:dualBrenier}, let us introduce
\begin{equation}\label{eq:udef}
u(z^1, \ldots, z^K) \coloneqq  \frac{t-1}{2t} \Big|\sum_k \lambda^k_0 {z^k} \Big|^2 + \sum_k \frac{\lambda^k_0}{t} w_k(z^k)\,.
\end{equation}
and the map $D^\lambda: \mathbb{R}^d \rightarrow \mathbb{R}^{dK}$, defined by
\begin{equation}\label{eq:Ddef}
D^\lambda(x) = \left(\lambda^1 x, \ldots, \lambda^K x \right) \,.
\end{equation}

By direct computation, one can check that
\begin{equation}\label{eq:ustarD}
(u^*\circ D^\lambda )(y) - \frac{t}{t-1}\frac{|y|^2}{2}= \frac{1}{t(t-1)} \inf_x \left\{ \frac{|x|^2}{2} - t \langle x,y \rangle + (t-1) \sum_k \lambda_0^k w_k^*(x) \right\}\,.
\end{equation}

The right-hand side is, up to a multiplicative factor, the same minimization problem we obtain after exchanging the infima in \eqref{eq:xw}.
Hence, applying the measurable selection theorem (see \cite[Lemma B.2]{aubin2026debiasing}), we obtain the following equivalent problem
\begin{equation}\label{eq:dualBreniermm} \tag{${\mc{P}}^*$}
\inf \left \{ \int u^* \circ D^\lambda \ed \nu_1  + \int u \, \ed \otimes_k \nu_0^k ~:~ w_k \text{ is proper, convex and l.s.c.} \right\},
\end{equation}
where $u$ and $D^\lambda$ are defined in \eqref{eq:udef} and \eqref{eq:Ddef}. 

\begin{remark} Observe that when $K=1$, problem \eqref{eq:dualBreniermm} coincides with the dual formulation \eqref{eq:dualBrenier} of the metric extrapolation problem.
\end{remark}

The precise strong duality result between problems \eqref{eq:Kto1} and \eqref{eq:dualBreniermm} is stated in the following theorem:

\begin{theorem}\label{th:duality}
The following holds:
\begin{equation}\label{eq:PPstar}
\frac{1}{t(t-1)} \eqref{eq:Kto1} =  \eqref{eq:dualBreniermm}  - C
\end{equation}
where
\[
C = \frac{t-1}{2t}   \int \Big|\sum_k  \lambda_0^k {z^k} \Big|^2 \ed \otimes_k\nu_0^k(z^k) +
\frac{1}{2t}\sum_k \lambda_0^k M_2(\nu_0^k) + \frac{1}{2}  M_2(\nu_1)\,. \]
Moreover, 
\begin{enumerate}
\item let $\mu$ solve \eqref{eq:Kto1}, and let $w_k$ be optimal Brenier potentials for the transport from $\nu^k_0$ to $\mu$ for all $k = 1, \ldots,K$. Then $(w_k)_k$ solve \eqref{eq:dualBreniermm} and
\begin{equation}\label{eq:optimpot}
\bar{u} \coloneqq t\frac{|\cdot|^2}{2} - (t-1) (u^*\circ D^\lambda)
\end{equation}
is an optimal Brenier potential for the transport from $\nu_1$ to $\mu$;
\item conversely, let $(w_k)_k$ solve \eqref{eq:dualBreniermm}. Then, the unique solution of problem \eqref{eq:Kto1} is given by 
\begin{equation}\label{eq:Tfromu}
\mu = (T + t(\mathrm{Id} - T))_\# \nu_1, \quad \text{where}\quad T \coloneqq \nabla (u^*\circ D^\lambda )\,,
\end{equation}
and $w_k$ are optimal Brenier potentials for the transport from $\nu^k_0$ to $\mu$.
\end{enumerate}
\end{theorem}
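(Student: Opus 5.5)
The plan is to make the formal derivation preceding the statement rigorous. The proof has three parts: the value identity \eqref{eq:PPstar}, a first-order characterization of the minimizer $X$ of \eqref{eq:Kto1X}, and the two directions (1) and (2).

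\textbf{Step 1: value identity.} By Lemma \ref{lem:Kto1existence}, \eqref{eq:Kto1} equals \eqref{eq:Kto1X} plus the constant $\frac t2 M_2(\nu_1)-\frac{t-1}{2}\sum_k\lambda_0^k M_2(\nu_0^k)$. This constant comes from expanding $W_2^2$ via MCov, using $\mathbb{E}\langle X,Y\rangle$ for the $\nu_1$-term. The replacement of the positive MCov by $-t\langle X,Y\rangle$ is justified by fixing $Y$ and maximizing over $X$.
\begin{itemize}
\item Insert the semi-dual \eqref{eq:mcovsd} for each $F_k$. This turns \eqref{eq:Kto1X} into the double infimum \eqref{eq:xw}.
\item Exchange the two infima; this is free for infima.
\item For fixed $(w_k)$, minimize over $X$ pointwise in $\omega$ using the measurable selection lemma \cite[Lemma B.2]{aubin2026debiasing}. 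Identity \eqref{eq:ustarD} then gives $t(t-1)\int (u^*\circ D^\lambda)\,\ed\nu_1-\frac{t^2}{2}M_2(\nu_1)$.
\item Rewrite $(t-1)\sum_k\lambda_0^k\int w_k\,\ed\nu_0^k$ as $t(t-1)\int u\,\ed\otimes_k\nu_0^k$ minus the quadratic term in $C$.
\end{itemize}
Collecting constants and dividing by $t(t-1)$ yields \eqref{eq:PPstar}. One has to check the integrability conventions ($w_k\in\mathcal{L}^1(\nu_0^k)$). One also has to verify \eqref{eq:ustarD}. For this, compute $u^*(D^\lambda y)$ as a sup over $(z^k)$, dualize the quadratic term $\frac{t-1}{2t}|\sum\lambda_0^k z^k|^2$ with a multiplier $x$, and perform the inner sups, which give the $w_k^*(x)$. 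This last sup–inf exchange is the one step I would check carefully, but it is finite-dimensional and concave–convex, so a standard minimax argument applies.

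\textbf{Step 2: optimality conditions.} Let $X$ be the minimizer of \eqref{eq:Kto1X} and $\mu=X_\#P$.
\begin{itemize}
\item For (1), take $w_k$ to be Brenier potentials from $\nu_0^k$ to $\mu$. Then $\int w_k^*(X)\,\ed P+\int w_k\,\ed\nu_0^k=F_k(X)$, so the pair $(X,(w_k))$ attains the value of \eqref{eq:xw}. Since the infima commute, $(w_k)$ is optimal for \eqref{eq:dualBreniermm}.
\item Moreover $X$ minimizes the inner problem for fixed $(w_k)$, so $X(\omega)$ minimizes $x\mapsto\frac{|x|^2}{2}-t\langle x,Y\rangle+(t-1)\sum_k\lambda_0^k w_k^*(x)$ almost surely.
\item This objective is strongly convex in $x$. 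Its minimizer depends on $y$ through the gradient of the marginal function. By the envelope theorem applied to \eqref{eq:ustarD}, $-tx=t(t-1)\nabla(u^*\circ D^\lambda)(y)-t^2y$, that is, $X=tY-(t-1)T(Y)=(T+t(\mathrm{Id}-T))(Y)$.
\item Hence $X=\nabla\bar u(Y)$ with $\bar u$ as in \eqref{eq:optimpot}. The function $\bar u$ is convex because the marginal function $m(y)=\inf_x\{\cdot\}$ is concave in $y$ (an infimum of affine functions of $y$ plus a constant). Thus $\bar u=-\frac{1}{t}m$ up to constants, and $\bar u$ is convex and differentiable.
\end{itemize}
By Brenier/Knott–Smith optimality of gradients of convex functions, $\bar u$ is an optimal potential from $\nu_1$ to $\mu$.

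\textbf{Step 3: converse.} Conversely, if $(w_k)$ solves \eqref{eq:dualBreniermm}, define $X:=(T+t(\mathrm{Id}-T))(Y)$, which is the pointwise inner minimizer. The pair $(X,(w_k))$ then attains the infimum of \eqref{eq:xw}, since the value equals the dual optimum equals the primal optimum. Dropping the $w_k$ gives $\frac12\|X\|^2-t\langle X,Y\rangle+(t-1)\sum_k\lambda_0^kF_k(X)\le$ value. So $X$ is optimal for \eqref{eq:Kto1X}, and it is the unique solution.

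Equality must then hold termwise in $F_k(X)\le\int w_k^*(X)+\int w_k\,\ed\nu_0^k$, because each term is an upper bound and the sum is tight. So each $w_k$ is a Brenier potential from $\nu_0^k$ to $\mu$.

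\textbf{Main obstacle.} The delicate points are the differentiability and envelope argument yielding the explicit map $T$, which is fine since the inner problem is $1$-strongly convex so the minimizer is unique and Lipschitz in $y$, and the integrability conventions when exchanging infima.
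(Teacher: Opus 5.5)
Your proof is correct. Step~1 is the paper's computation. Your converse is the paper's part~(2) written on the lifted problem: the paper uses $\mu:=\nabla\bar u_\#\nu_1$ as a competitor in \eqref{eq:Kto1} and the $w_k$ as competitors in the semi-duals, which is your ``drop the $w_k$'' step integrated against $P$.

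The genuine difference is in part~(1). The paper never identifies the inner minimizer pointwise. It inserts $\bar u$ as a competitor in the semi-dual for $\mr{MCov}(\mu,\nu_1)$, and the attaining $w_k$ in the semi-duals for $\mr{MCov}(\mu,\nu_0^k)$. It then invokes the conjugacy identity \eqref{eq:ubarss}, namely $\bar u^*=\frac{|\cdot|^2}{2t}+\frac{t-1}{t}\sum_k\lambda_0^k w_k^*$. Under this identity every $\mu$-dependent term in \eqref{eq:Kto1est} cancels, and the right-hand side becomes the objective of \eqref{eq:dualBreniermm} at $(w_k)_k$ minus constants. Then \eqref{eq:PPstar} closes the chain and forces equality, which gives at once that $(w_k)_k$ is optimal and that $\bar u$ attains the semi-dual. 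Convexity and smoothness of $\bar u$ come from $u^*\circ D^\lambda=v^*$ with $v$ strongly convex.

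You instead show that $X(\omega)$ solves the strongly convex inner problem a.s., read off $X=\nabla\bar u(Y)$ from the envelope theorem, get convexity from $\bar u=-m/t$, and finish with Knott--Smith. Both arguments rest on the same fact. Since $\bar u^*=\frac1t\bigl(\frac{|\cdot|^2}{2}+(t-1)\sum_k\lambda_0^k w_k^*\bigr)$, your pointwise equality ``inner objective at $X(\omega)$ equals $m(Y(\omega))$'' is exactly Fenchel's equality $\bar u(Y)+\bar u^*(X)=\langle X,Y\rangle$.

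What each route buys:
\begin{itemize}
\item The paper needs only the integrated form of this equality. A single reversible chain of inequalities serves both directions, with no a.s.\ argument and no differentiability of the marginal function.
\item Your route gives a sharper, more explicit conclusion: for every choice of $Y\sim\nu_1$, the minimizer of \eqref{eq:Kto1X} is $\nabla\bar u(Y)$. This makes concrete the claim, in the remark following Lemma~\ref{lem:Kto1existence}, that $(X,Y)_\#P$ is an optimal plan.
\end{itemize}
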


Before providing the proof, we make the following preliminary observation. Define
\begin{equation}\label{eq:defv}
v(x) \coloneqq \frac{t-1}{t} \frac{|x|^2}{2} + \inf_{z^k}\left\{ \sum_k \frac{\lambda_0^k}t w_k(z^k) ~;~ \sum_k \lambda_0^k z^k = x\right\}\,.
\end{equation}
By direct computation, one can verify that $u^* \circ D^\lambda = v^*$. Since $v$ is a strongly convex function, by classical convex analysis arguments we can deduce that $T= \nabla v^*$ is Lipschitz, and the potential in \eqref{eq:optimpot} is a smooth convex function. In particular, equation \eqref{eq:Tfromu} is well-defined even if $\nu_1$ is not absolutely continuous.

\begin{proof} Equation \eqref{eq:PPstar} is a direct consequence of the computations above. The same holds for the fact that the functions $w_k$ are optimal Brenier potentials from $\nu_0^k$ to $\mu$.
Now, since $\bar{u}$ is a smooth convex function, it is a competitor for the semi-dual problem for the transport from $\nu_1$ to $\mu$. We have
\begin{multline}\label{eq:Kto1est}
\frac{1}{t(t-1)}\eqref{eq:Kto1} - \frac{1}{2(t-1)} M_2(\nu_1) - \sum_k \frac{\lambda_0^k}{2t}M_2(\nu_0^k) \\\geq \frac{1}{2t(t-1)}  M_2(\mu) -\frac{1}{t-1}\left(\int \bar{u}^*\ed \mu + \int \bar{u} \ed \nu_1\right) + \sum_k \frac{\lambda_0^k}{t}\left( \int w_k^*\ed \mu + \int w_k \ed \nu_0^k \right)\,.
\end{multline}
Now, multiplying both sides of equation \eqref{eq:ustarD} by 
$-(t-1)$, we obtain
\begin{equation}\label{eq:ubarss}
\bar{u} = \left[ \frac{|\cdot|^2}{2t} + \frac{t-1}{t} \sum_k \lambda_0^k w^*_k  \right]^*.
\end{equation}
Taking the Legendre transform of both sides, we obtain that the right-hand side of \eqref{eq:Kto1est}  is bounded from below by
\[
\eqref{eq:dualBreniermm} - \frac{t}{2(t-1)}M_2(\nu_1) +\frac{t-1}{2t}   \int \Big|\sum_k  \lambda_0^k {z^k} \Big|^2 \ed \otimes_k\nu_0^k(z^k)\,.
\]
But then from the equality \eqref{eq:PPstar}, $\bar{u}$ is necessarily optimal.

The second point can be obtained in a similar way. First, we observe that $\bar{u}$ is an optimal Brenier potential for the transport from $\nu_1$ to $\mu$, as
\[
T + t(\mathrm{Id} -T) = \nabla \bar{u}
\]
is the gradient of the convex function $\bar{u}$. On the other hand, we can use $\mu$ as a competitor in problem \eqref{eq:Kto1}  and $w_k$ as a competitor for the semi-dual problems defining the transport from $\nu_0^k$ to $\mu$. This gives again \eqref{eq:Kto1est} but with $\geq$ replaced with $\leq$. Using again \eqref{eq:ubarss}, this time we obtain
\begin{multline}\label{eq:Kto1est2}
\frac{1}{t(t-1)}\eqref{eq:Kto1} - \frac{1}{2(t-1)} M_2(\nu_1) - \sum_k \frac{\lambda_0^k}{2t}M_2(\nu_0^k) \\\leq \frac{1}{2t(t-1)}  M_2(\mu) -\frac{1}{t-1}\left(\int \bar{u}^*\ed \mu + \int \bar{u} \ed \nu_1\right) + \sum_k \frac{\lambda_0^k}{t}\left( \int w_k^*\ed \mu + \int w_k \ed \nu_0^k \right) \\
= \eqref{eq:dualBreniermm} - \frac{t}{2(t-1)}M_2(\nu_1) +\frac{t-1}{2t}   \int \Big|\sum_k  \lambda_0^k {z^k} \Big|^2 \ed \otimes_k\nu_0^k(z^k)\,.
\end{multline}
But then from the equality \eqref{eq:PPstar}, $\mu$ is necessarily optimal for \eqref{eq:Kto1}  and $w_k$ are optimal Brenier potentials for the transport from $\nu^k_0$ to $\mu$.

\end{proof}

Since by Lemma \ref{lem:Kto1existence} \eqref{eq:Kto1} admits a solution, then as a consequence of the first point of Theorem \ref{th:duality}, and dual attainment for the optimal transport problem with the $L^2$ cost (see, e.g., \cite[Theorem 5.10]{villani2009optimal}), we immediately obtain the following:

\begin{corollary}[Dual attainement] Problem \eqref{eq:dualBreniermm} admits at least a solution $(w_k)_k$. Moreover, for all $k$, if $\nu_0^k$ is absolutely continuous, $\nabla w_k$ is unique $\nu_0^k$-a.e..
\end{corollary}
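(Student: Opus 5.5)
Existence follows by combining Lemma \ref{lem:Kto1existence} with the first point of Theorem \ref{th:duality}. By Lemma \ref{lem:Kto1existence}, problem \eqref{eq:Kto1} has a unique minimizer $\mu \in \mc{P}_2(\mathbb{R}^d)$. For each $k$, dual attainment for the quadratic optimal transport problem (e.g.\ \cite[Theorem 5.10]{villani2009optimal}) gives a proper, convex, l.s.c.\ function $w_k$ that attains the infimum in the semi-dual formulation \eqref{eq:mcovsd} of $\mr{MCov}(\nu_0^k,\mu)$. This $w_k$ is a Brenier potential from $\nu_0^k$ to $\mu$, and it is integrable with respect to $\nu_0^k$ because both measures have finite second moments. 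The first point of Theorem \ref{th:duality} then states that $(w_k)_k$ solves \eqref{eq:dualBreniermm}. This proves existence.

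For uniqueness of gradients, I would reverse the argument using the second point of Theorem \ref{th:duality}. Let $(w_k)_k$ be an arbitrary solution of \eqref{eq:dualBreniermm}. By the second point, each $w_k$ is an optimal Brenier potential for the transport from $\nu_0^k$ to the solution of \eqref{eq:Kto1}. That solution is unique by Lemma \ref{lem:Kto1existence}, so it is the same measure $\mu$ for every dual solution. The question therefore reduces to the following: for fixed absolutely continuous $\nu_0^k$ and fixed $\mu$, are all optimal Brenier potentials from $\nu_0^k$ to $\mu$ equal in gradient, $\nu_0^k$-a.e.?

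Brenier's theorem answers this. Take any optimal potential $w$. Optimality together with the Fenchel--Young inequality $w(x)+w^*(y)\ge \langle x,y\rangle$ forces every optimal plan to be concentrated on the graph of $\partial w$. Since $\nu_0^k$ is absolutely continuous, $w$ is differentiable $\nu_0^k$-a.e. (Rademacher plus Alexandrov, or convexity on the interior of its domain). Hence $(\mathrm{Id},\nabla w)_\#\nu_0^k$ is an optimal plan. The optimal plan from an absolutely continuous measure is unique, so any two optimal potentials $w,w'$ satisfy $\nabla w=\nabla w'$ $\nu_0^k$-a.e.

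The main obstacle is a domain technicality: one must ensure that $\nu_0^k$ gives full mass to the interior of $\mathrm{dom}(w_k)$, so that $\nabla w_k$ is defined $\nu_0^k$-a.e. To handle it, I would use that the boundary of a convex set is Lebesgue-negligible. I would also use that the support of $\nu_0^k$ must lie in $\overline{\mathrm{dom}(w_k)}$, since $w_k\in \mc{L}^1(\nu_0^k)$.
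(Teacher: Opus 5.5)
Your proposal is correct and follows the paper's route. Existence comes from Lemma~\ref{lem:Kto1existence}, dual attainment for the quadratic cost, and the first point of Theorem~\ref{th:duality}. Uniqueness of $\nabla w_k$ comes from the second point of that theorem: every dual solution consists of Brenier potentials towards the same unique $\mu$, and Brenier's theorem then applies. Your handling of the domain issue (integrability forces $\nu_0^k(\mathrm{dom}\, w_k)=1$, and the boundary of a convex set is Lebesgue-negligible) fills in a detail the paper leaves implicit.
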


\begin{remark}[Uniqueness] \label{rem:uniqueness} In the setting of the second point of Theorem \ref{th:duality}, one can verify that the curve
\[
s\in[0,t] \mapsto (T +s(\mr{Id}-T))_\# \nu_1 \in\mc{P}_2(\mathbb{R}^d)
\]
is a minimizing geodesic with respect to $W_2$. This implies that, without further assumptions of absolute continuity, the maps $T \in L^2(\nu_1)$ and the measure $\overline{\nu}_0 \coloneqq T_\#\nu_1$ are always uniquely defined. In other words, problem \eqref{eq:dualBreniermm} yields a uniquely defined measure $\overline{\nu}_0$ such that the Wasserstein geodesic from $\overline{\nu}_0$ to $\nu_1$ stays minimizing up to time $t$. The arrival point at time $t$ is precisely the solution $\mu$ of problem \eqref{eq:Kto1}.
\end{remark}

\subsection{Dual formulation as projection in convex order}
We now deduce another dual formulation of \eqref{eq:Kto1}, which clarifies the structure of its solutions in terms of convex order relations. In particular, it will provide  an explicit variational characterization of the measure $\overline{\nu}_0$ in Remark \ref{rem:uniqueness}.

The starting point is again equation \eqref{eq:Kto1X}. Using Lemma \ref{lem:mcov}, we write this as follows:
\begin{equation}\label{eq:saddle}
\inf_{X} \sup_{Z^k} \left\{ \frac{1}{2} \|X\|^2_{L^2} -t \langle X,Y\rangle_{L^2}  + (t-1)\sum_{k} \lambda_0^k \left\langle Z^k,X \right \rangle_{L^2} ~;~ Z^k_\# P \preceq \nu_0^k \right\}\,.
\end{equation}

For a given $\nu\in\mc{P}_2(\mathbb{R}^d)$, consider the set 
\[
\{ Z \in L^2 ~;~ Z_\# P \preceq_C \nu\} .
\]
One can check that this is a convex, bounded and weakly closed subset of $L^2$. In fact, by Lemma \ref{lem:mcov}, this set is necessarily convex and strongly closed, and hence weakly closed by \cite[Theorem 3.7]{brezis2011functional}, for example; whereas boundedness comes from the fact that $\nu \in \mc{P}_2(\mathbb{R}^d)$. Moreover, the objective function in \eqref{eq:saddle} 
is convex and weakly l.s.c.\ in $X$, and linear in $Z^k$. As a consequence we can apply a standard minimax duality result (see, e.g., Theorem 3.1 in \cite{simons2006minimax}) to exchange the infimum and supremum in \eqref{eq:saddle}. This leads us to the following formulation (after minimization over $X$ and a sign change):
\begin{equation} \tag{$\mc{Q}$} \label{eq:Kto1proj}
    \inf_{Z^k} \left\{ \frac{1}{2} \Big\|tY - (t-1) \sum_k \lambda^k_0 Z^k\Big\|^2_{L^2} ~ ; ~ Z^k_\# P \preceq_C \nu_0^k  \right\}\,.
\end{equation}

\begin{remark}
Note that one would get this same formulation by applying Toland duality to problem \eqref{eq:Kto1}, but viewed as a difference of the convex functions $X \mapsto \|X\|^2/2 + (t-1) \sum_k \lambda_0^k F_{\nu_0^k}(X)$ and $X \mapsto t F_{\nu_1}(X)$; see Section \ref{sec:toland}. 
 \end{remark}

The computations above give us directly the following duality result:

\begin{theorem} \label{th:kto1proj} There holds:
\[
\eqref{eq:Kto1} = - \eqref{eq:Kto1proj} + t \frac{M_2(\nu_1)}{2} -(t-1) \sum_k \lambda_0^k \frac{M_2(\nu_0^k)}{2}.
\]
Moreover $X$ solves \eqref{eq:Kto1} if and only if 
\begin{equation}\label{eq:XYZ}
X = tY - (t-1) \sum_k \lambda^k_0 Z^k
\end{equation}
where $Z^k$ satisfies $Z^k_\# P \preceq_C \nu_0^k$ for all $k=1,\ldots,K$, and $(Z^k)_k$ solve \eqref{eq:Kto1proj}.
\end{theorem}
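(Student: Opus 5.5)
The argument runs through the lifted problem \eqref{eq:Kto1X} and the saddle formulation \eqref{eq:saddle}. Write $\Phi(X,Z) = \frac12\|X\|^2 - t\langle X,Y\rangle + (t-1)\sum_k\lambda_0^k\langle Z^k,X\rangle$ on $L^2\times \prod_k C_k$, where $C_k = \{Z\,;\, Z_\#P\preceq_C\nu_0^k\}$.

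\textbf{Step 1 (value identity).} Expanding the squared Wasserstein distances via the maximum covariance relation and using Lemma \ref{lem:Kto1existence}, the value of \eqref{eq:Kto1} equals
\[
t\frac{M_2(\nu_1)}{2} - (t-1)\sum_k\lambda_0^k\frac{M_2(\nu_0^k)}{2} + \inf_X\sup_{Z}\Phi(X,Z),
\]
where Lemma \ref{lem:mcov} replaces each $F_k$ by a supremum over $C_k$. The sets $C_k$ are convex, bounded and weakly closed (hence weakly compact), and $\Phi$ is convex and weakly l.s.c.\ in $X$ and affine in $Z$. The minimax theorem therefore gives $\inf_X\sup_Z\Phi=\sup_Z\inf_X\Phi$. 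For fixed $Z$, minimizing over $X$ is explicit: the minimizer is $X=tY-(t-1)\sum_k\lambda_0^kZ^k$, with value $-\frac12\|tY-(t-1)\sum_k\lambda_0^kZ^k\|^2$. Hence $\sup_Z\inf_X\Phi = -\eqref{eq:Kto1proj}$, which is the claimed identity. The supremum is attained because $Z\mapsto \inf_X\Phi(X,Z)$ is concave and weakly u.s.c.\ (an infimum of affine continuous functionals) on the weakly compact set $\prod_k C_k$.

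\textbf{Step 2 (characterization of minimizers).} The equivalence comes from the saddle-point structure. Since the minimax equality holds and both the infimum (Lemma \ref{lem:Kto1existence}: strongly convex objective) and the supremum are attained, standard theory gives that $(\bar X,\bar Z)$ is a saddle point exactly when $\bar X$ is optimal for the $\inf\sup$ and $\bar Z$ is optimal for the $\sup\inf$. Here "$X$ solves \eqref{eq:Kto1}" is read as "$X$ solves \eqref{eq:Kto1X}", so that $X_\#P$ is the minimizer of \eqref{eq:Kto1}.

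For the first direction, let $\bar X$ be the unique minimizer and $\bar Z$ any solution of \eqref{eq:Kto1proj}. Then $(\bar X,\bar Z)$ is a saddle point, so $\bar X$ minimizes the strongly convex map $\Phi(\cdot,\bar Z)$. This forces $\bar X=tY-(t-1)\sum_k\lambda_0^k\bar Z^k$, which is \eqref{eq:XYZ}.

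For the converse, suppose $X$ is given by \eqref{eq:XYZ} with $Z$ optimal for \eqref{eq:Kto1proj}. Then $X$ is the minimizer of $\Phi(\cdot,Z)$, so $X$ coincides with the $X$-component of the saddle point and hence equals the unique solution.

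\textbf{Main obstacle.} The delicate step is the rigorous minimax exchange together with the weak compactness of the sets $C_k$. The paper has already sketched this: Lemma \ref{lem:mcov} gives convexity and strong closedness, and the second moment bound gives boundedness. With that in hand, the remaining steps are direct computations.
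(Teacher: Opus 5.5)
Your proposal is correct and follows the paper's route. You lift to \eqref{eq:Kto1X} and use Lemma \ref{lem:mcov} to turn each $F_k$ into a supremum over the convex, bounded, weakly closed sets $\{Z^k_\#P\preceq_C\nu_0^k\}$; you then exchange infimum and supremum by the minimax theorem and minimize explicitly in $X$. Your saddle-point argument (attainment of both problems, then strong convexity of $\Phi(\cdot,\bar Z)$) correctly makes explicit the ``if and only if'' that the paper says follows ``directly'' from these computations.
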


\begin{remark} Comparing equation \eqref{eq:XYZ} with \eqref{eq:Tfromu} we can deduce that given $Z^k$ solving \eqref{eq:Kto1proj} and $T=\nabla (u^* \circ D)$ constructed as in Theorem \ref{th:duality} form any solution of \eqref{eq:dualBreniermm}, we necessarily have
\begin{equation}\label{eq:zbar}
\overline{Z}\coloneqq \sum_k \lambda^k_0 Z^k = T \circ Y\,.
\end{equation}
This confirms the fact that $T$ must be unique, since problem \eqref{eq:Kto1proj} is strongly convex in $\overline{Z}$. It also gives a more direct characterization of the measure $\overline{\nu}_0 = T_\# \nu_1$ (see Remark \ref{rem:uniqueness}) as the law of $\overline{Z}$, with $Z^k$ solving \eqref{eq:Kto1proj}.
\end{remark}

\begin{remark}[Wasserstein projection in convex order] Let $D^\theta(x) \coloneqq  \theta x$ and $\theta = t/(t-1)$. For 
$K=1$, let us set $\nu_0 = \nu_0^1$ and $Z = Z^1$. Then we obtain that $\overline{\nu}_0 = Z_\# P$ is the unique solution of
\[
\inf \left\{ W^2_2 (D^\theta_\# \nu_1, \overline{\nu}_0) ~;~  \overline{\nu}_0 \preceq_C \nu_0 \right\}\,.
\]
When $K>1$, we do not have a similar $W_2$ projection formulation (e.g., of the type studied in \cite{bolbotowski2025bi}). This is because the constraint in \eqref{eq:Kto1proj} cannot be reduced to a constraint on the law of $\overline{Z}$ defined in \eqref{eq:zbar}.
\end{remark}

\begin{remark}[Stability]  We remark that from problem \eqref{eq:dualprojection} one can easily derive a $W_2$-Lipschitz stability bound on the solution with respect to $\nu_1$, following \cite{kim2026stability,alfonsi2026wasserstein,di2026stability}. On the other hand, the strong convexity of problem \eqref{eq:Kto1X} (or equivalently the strong total convexity of \eqref{eq:Kto1}) implies local 1/2-Hölder stability in $\nu_0^k$, see \cite{gallouet2025metric,di2026stability}.
\end{remark}

\subsection{Multi-plan Weak OT formulation} We finally derive a weak Optimal Transport formulation of problem \eqref{eq:Kto1proj}. This relies on Strassen's Theorem \cite{strassen1965existence}, which states that a convex order relation between two measures is equivalent to the existence of a martingale coupling between them. This can be written in the following form:
\begin{lemma}\label{lem:strassen}
\[
Z_\# P \preceq_C \nu \iff \exists \gamma \in \Gamma(P,\nu) \text{ such that } Z(\omega) = \int y \ed \gamma_\omega(y) \quad \text{ for $P$-a.e. $\omega$} \,.
\]
\end{lemma}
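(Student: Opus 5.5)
The plan is to prove the two implications separately, reducing both to the classical measure-level Strassen theorem: $\mu \preceq_C \nu$ iff there is a martingale coupling $\pi\in\Gamma(\mu,\nu)$, i.e.\ one with $\int y\,\ed\pi_x(y) = x$ for $\mu$-a.e.\ $x$. The lifted statement should then follow by composing kernels with $Z$, and by disintegrating $\gamma$ along $\omega$.

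\emph{($\Leftarrow$).} Suppose $\gamma\in\Gamma(P,\nu)$ satisfies $Z(\omega)=\int y\,\ed\gamma_\omega(y)$. Fix a convex $\varphi:\mathbb{R}^d\to\mathbb{R}$. Jensen's inequality applied to the probability measure $\gamma_\omega$ gives $\varphi(Z(\omega))\le \int\varphi(y)\,\ed\gamma_\omega(y)$ for $P$-a.e.\ $\omega$. Integrating in $P$ and using that the second marginal of $\gamma$ is $\nu$ yields $\int\varphi\,\ed Z_\#P\le\int\varphi\,\ed\nu$.

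This step needs integrability to make sense. A convex $\varphi$ is bounded below by an affine function, so the negative parts are integrable because both measures have first moments. In the paper's convention the inequality is allowed to read $+\infty\le+\infty$, or one may restrict to convex $\varphi$ of at most quadratic growth. Either way this is routine.

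\emph{($\Rightarrow$).} Suppose $\mu\coloneqq Z_\#P\preceq_C\nu$. By Strassen's theorem \cite{strassen1965existence} there is $\pi\in\Gamma(\mu,\nu)$ with disintegration $(\pi_x)_x$ satisfying $\int y\,\ed\pi_x(y)=x$ for $\mu$-a.e.\ $x$. Define $\gamma\in\mc{P}(\Omega\times\mathbb{R}^d)$ by $\ed\gamma(\omega,y)=\ed\pi_{Z(\omega)}(y)\,\ed P(\omega)$, so that $\gamma_\omega=\pi_{Z(\omega)}$.

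Measurability of $\omega\mapsto\pi_{Z(\omega)}$ holds because $x\mapsto\pi_x$ is a measurable kernel and $Z$ is measurable. We then check the two required properties:
\begin{itemize}
\item The first marginal of $\gamma$ is $P$ by construction.
\item For the second marginal, take any bounded Borel $f$. Then $\int f\,\ed\gamma=\int\!\int f\,\ed\pi_{Z(\omega)}\,\ed P=\int\!\int f\,\ed\pi_x\,\ed\mu(x)=\int f\,\ed\nu$.
\end{itemize}
Finally, the barycenter condition transfers: the set $N=\{x:\int y\,\ed\pi_x\neq x\}$ is $\mu$-null, so $Z^{-1}(N)$ is $P$-null. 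Hence $\int y\,\ed\gamma_\omega(y)=Z(\omega)$ for $P$-a.e.\ $\omega$.

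The only substantive ingredient is Strassen's theorem itself, which we take as known. The main obstacle, if any, is bookkeeping: the disintegration of $\gamma$ with respect to $P$ must be well-defined on the abstract probability space $(\Omega,\mc F,P)$, and the kernel composition must be measurable. Since $\mathbb{R}^d$ is Polish, kernels into it exist regardless of the structure of $\Omega$, so we will construct $\gamma$ directly from the kernel $\omega\mapsto\pi_{Z(\omega)}$ rather than disintegrating an arbitrary measure on $\Omega\times\mathbb{R}^d$. This avoids any regularity assumption on $\Omega$.
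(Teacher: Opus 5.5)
Your proposal is correct and matches the paper, which presents this lemma directly as a restatement of Strassen's theorem for the lifted variable $Z$ and gives no further argument. Your kernel composition $\gamma_\omega = \pi_{Z(\omega)}$ for the forward direction, and conditional Jensen for the converse, supply exactly the details that restatement leaves implicit.
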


Using this characterization, we can replace the minimization over $Z^k$ in \eqref{eq:Kto1proj} with a minimization over couplings, which yields the following formulation

\[
\inf \left\{ \frac{1}{2} \int \Big | t Y(\omega) - (t-1) \sum_k \lambda^k \int z \ed \gamma_\omega^k (z)\Big |^2 \ed P(\omega) ~ ; ~ \gamma^k \in \Gamma(P,\nu_0^k)  \right\}\,.
\]

Given any $\pi^k\in \Gamma(\nu_1,\nu_0^k)$ we can construct an admissible  $\gamma^k$ by setting \[ \ed \gamma^k(\omega,z) = \ed \pi^k_{Y(\omega)}(z) \otimes \ed P(\omega). \] This means that the problem above is bounded from above by
\begin{equation}\label{eq:wotmm} \tag{$\mc{Q}'$}
\inf \left\{ \frac{1}{2} \int \Big | t y - (t-1) \sum_k \lambda^k \int z \ed \pi_y^k (z)\Big |^2 \ed \nu_1(y) ~ ; ~ \pi^k \in \Gamma(\nu_1,\nu_0^k)  \right\}\,.
\end{equation}

On the other hand, fix arbitrary $\gamma^k \in \Gamma(P,\nu_0^k)$, for all $k$. Denoting $\eta = (\mr{Id}, Y)_\# P$, we can write $\ed P(\omega) = \int \ed \eta_y(\omega) \ed \nu_1(y)$ and hence by Jensen's inequality
\begin{multline}\label{eq:jensenwot}
 \frac{1}{2} \int \Big | t Y(\omega) - (t-1) \sum_k \lambda^k \int z \gamma_\omega^k (z)\Big |^2 \ed P(\omega) \\\geq  \frac{1}{2} \int \Big | t \int Y(\omega) \ed \eta_y(\omega) - (t-1) \sum_k \lambda^k \iint z \ed \gamma_\omega^k (z) \ed \eta_y(\omega) \Big |^2 \ed \nu_1 (y)\,.
 \end{multline}
 By construction $\int Y(\omega) \ed \eta_y(\omega) = y$ for $\nu_1$-a.e.\ $y$. Moreover, for all $k$, we can define a coupling $\pi^k \in \Gamma(\nu_1,\nu_0^k)$ as follows
 \begin{equation}\label{eq:pikwotmm}
 \int f(y,z) \ed \pi^k(y,z) = \iiint f(y,z) \ed \gamma^k_\omega(z) \ed \eta_y(\omega)  \ed \nu_1(y)\,, \quad \forall f\in C_b(\mathbb{R}^d\times \mathbb{R}^d)\,.
 \end{equation}
 Then the right-hand side of \eqref{eq:jensenwot} is bounded from below by \eqref{eq:wotmm}.
This means that \eqref{eq:Kto1proj} is equivalent to \eqref{eq:wotmm}. More precisely, we have proved the following:
\begin{proposition}\label{prop:projwot}
There holds: $\eqref{eq:Kto1proj} = \eqref{eq:wotmm}$.  Moreover, if $(\pi^k)_k$ solves \eqref{eq:wotmm} then the maps $Z^k \in L^2$ defined for all $k$ by
\[
Z^k(\omega) \coloneqq \int_y \ed \pi^k_{Y(\omega)}(y) \,, \quad \text{for } P\text{-a.e.}\, \omega \in \Omega\,,
\]
solve \eqref{eq:Kto1proj}. Conversely, for any solution $(Z^k)_k$ of \eqref{eq:Kto1proj}, the couplings $\pi^k$ defined via equation \eqref{eq:pikwotmm} solve \eqref{eq:wotmm}.
\end{proposition}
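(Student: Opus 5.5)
The plan is to prove the two inequalities $\eqref{eq:Kto1proj}\le\eqref{eq:wotmm}$ and $\eqref{eq:Kto1proj}\ge\eqref{eq:wotmm}$ via the two constructions sketched just above the statement. The attainment statements then follow because each construction maps competitors of one problem to competitors of the other without increasing the cost.

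First, we rewrite \eqref{eq:Kto1proj} as the problem over $\gamma^k\in\Gamma(P,\nu_0^k)$ using Lemma \ref{lem:strassen}. Given $Z^k$ with $Z^k_\#P\preceq_C\nu_0^k$, Strassen's Theorem yields $\gamma^k$ with barycenter $Z^k(\omega)$. Conversely, any $\gamma^k\in\Gamma(P,\nu_0^k)$ defines $Z^k(\omega)=\int z\,\ed\gamma^k_\omega(z)$. For this $Z^k$, the convex order relation $Z^k_\#P\preceq_C\nu_0^k$ follows from Jensen's inequality applied to convex $\varphi$ and integrated against $P$. So the two problems have the same value, and minimizers correspond.

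For the upper bound, take any $\pi^k\in\Gamma(\nu_1,\nu_0^k)$ and set $\ed\gamma^k=\ed\pi^k_{Y(\omega)}\otimes\ed P$. Since $Y_\#P=\nu_1$, the second marginal is $\nu_0^k$. The barycenter of $\gamma^k_\omega$ is $Z^k(\omega)=\int z\,\ed\pi^k_{Y(\omega)}(z)$. The cost becomes $\frac12\int|ty-(t-1)\sum_k\lambda^k_0\int z\,\ed\pi^k_y|^2\ed\nu_1(y)$ by a change of variables through $Y$. This gives $\eqref{eq:Kto1proj}\le\eqref{eq:wotmm}$. It also shows that if $(\pi^k)$ is optimal for \eqref{eq:wotmm}, the associated $Z^k$ attain the value of \eqref{eq:wotmm}, hence are optimal once equality is established.

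For the lower bound, fix $\gamma^k$ and disintegrate $P$ with respect to $Y$ via $\eta=(\mr{Id},Y)_\#P$. Then $\eta_y$ is concentrated on $\{Y=y\}$, so $\int Y\,\ed\eta_y=y$. Jensen's inequality (convexity of $|\cdot|^2$) gives \eqref{eq:jensenwot}. We define $\pi^k$ by \eqref{eq:pikwotmm}; its first marginal is $\nu_1$, and its second is $\int\gamma^k_\omega\,\ed P=\nu_0^k$. The disintegration of $\pi^k$ with respect to $y$ is $\pi^k_y=\int\gamma^k_\omega\,\ed\eta_y(\omega)$, so $\int z\,\ed\pi^k_y=\iint z\,\ed\gamma^k_\omega\,\ed\eta_y$. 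Hence the right-hand side of \eqref{eq:jensenwot} is exactly the cost of $(\pi^k)$ in \eqref{eq:wotmm}, giving $\ge$. Applied to optimal $Z^k$ (via their Strassen couplings), this shows the induced $\pi^k$ have cost at most $\eqref{eq:Kto1proj}=\eqref{eq:wotmm}$, hence are optimal.

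The main technical point is measurability and well-definedness. Specifically, $\omega\mapsto\pi^k_{Y(\omega)}$ must be a measurable kernel, obtained by composing the $\nu_1$-a.e.\ defined disintegration with $Y$; the null-set issue is harmless since $Y_\#P=\nu_1$. In addition, all barycenters must be in $L^2$. This holds because $\int|\int z\,\ed\pi_y|^2\ed\nu_1\le M_2(\nu_0^k)$ by Jensen's inequality. Existence of a solution of \eqref{eq:Kto1proj}, which is needed for the converse statement, follows from the weak compactness and convexity of the constraint sets noted earlier. If the converse is meant with respect to the particular Strassen coupling, any choice works by the argument above.
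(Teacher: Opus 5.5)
Your proposal is correct and follows the paper's argument essentially step by step: the Strassen reformulation over $\gamma^k\in\Gamma(P,\nu_0^k)$, the lift $\ed\gamma^k=\ed\pi^k_{Y(\omega)}\otimes\ed P$ giving $\eqref{eq:Kto1proj}\le\eqref{eq:wotmm}$, and Jensen's inequality along the disintegration $\eta=(\mr{Id},Y)_\#P$ combined with the couplings \eqref{eq:pikwotmm} giving the reverse bound, with the correspondence of minimizers read off from these cost-nonincreasing constructions. One minor point: existence of a solution of \eqref{eq:Kto1proj} is not actually needed, since the converse statement assumes a solution is given.
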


\begin{remark} The advantage of \eqref{eq:wotmm} compared with the equivalent problems \eqref{eq:Kto1}, \eqref{eq:dualBreniermm} or \eqref{eq:Kto1proj}, is that \eqref{eq:wotmm} is just a quadratic program with linear constraints. One can therefore use off-the-shelf algorithms to solve it numerically. In \cite{gallouet2025metric}, we studied the case $K=1$, and we proposed a numerical scheme based on an entropy-regularized version of this problem. The scheme proposed below in Section \ref{sec:Kto1numerical} can also be obtained in this way (see Remark \ref{rem:convexity}), although we will provide a different derivation based on the primal problem \eqref{eq:Kto1}.
\end{remark}

\section{A selection principle for linearized barycenters} \label{sec:linearized}
In this section we discuss the link between signed barycenters with one positive coefficient and linearized barycenters \cite{merigot2020quantitative,wang2013linear}. Given an absolutely continuous measure $\rho \in \mc{P}^{ac}_2(\mathbb{R}^d)$ and a family of probability measures $(\nu_0^k)_k  \subset \mc{P}_2(\mathbb{R}^d)$, their linearized OT barycenter with base $\rho$ and coefficients $(\lambda^k)_k \subset \mathbb{R}_{>0}$ is the unique measure defined by
\[
\overline{\nu}_\rho = \Big(\sum_k \lambda^k T^k\Big)_\# \rho
\]
where $T^k$ are the unique optimal transport map from $\rho$ to $\nu_0^k$. When $\rho$ is not absolutely continuous, a common setting in applications \cite{wang2013linear,nenna2022transport,tanguy2024computing}, this definition may be generalized using plans. In this case, however, we may not have uniqueness and one needs to adopt a selection rule. Here we show that the limit $t \downarrow 1$ of problem \eqref{eq:Kto1} yields a possible selection. Conversely, this implies that linearized barycenters can be used to approximate the solutions to \eqref{eq:Kto1} when $t$ is close to one. 

We will mostly rely on the Weak OT formulation \eqref{eq:wotmm} of problem \eqref{eq:Kto1}. We start by observing that rearranging the square in \eqref{eq:wotmm} and dividing the objective by $t(t-1)$, we get that problem \eqref{eq:wotmm} is equivalent (up to constant terms) to
\begin{equation}\label{eq:Bproblem}
\inf_{\pi^k\in \Gamma(\nu_1,\nu_0^k)} \left\{ B + \frac{t-1}{2t} M\right\}\,, 
\end{equation}
where
\[
B(\pi^1,\ldots,\pi^K) \coloneqq \sum_k \lambda^k \int |x_1-x_0|^2\ed \pi^k(x_0,x_1)\,,
\]
and 
\begin{equation}\label{eq:M}
M(\pi^1,\ldots,\pi^K) \coloneqq \int  \Big|\sum_k \lambda^k \mathrm{bary}(\pi^k_{x_1})\Big|^2 \ed \nu_1(x_1) \,.
\end{equation}
In particular, denoting by $(\pi^k_t)_k$ any minimizer of \eqref{eq:wotmm} for any $t>1$, we have that 
\begin{equation}\label{eq:bbb}
 B(\pi_t^k) + \frac{t-1}{2t} M(\pi^k_t) \leq  B(\pi^k) + \frac{t-1}{2t} M(\pi^k)\,,
\end{equation}
for any $\pi^k \in \Gamma(\nu_1,\nu^k_0)$. Since for all $k$ and $t>1$, $\pi^k_t \in \Gamma(\nu_1,\nu_0^k)$, there exists a subsequence $t_n \downarrow 1$ and couplings $\pi_*^k \in \Gamma(\nu_1,\nu_0^k)$ such that 
\begin{equation}\label{eq:pilim}
\pi^k_{t_n} \rightharpoonup \pi_*^k  ~\text{ as } n\rightarrow \infty.
\end{equation}
By lower semicontinuity of $B$ with respect to weak convergence, equation \eqref{eq:bbb} implies that for all $k$,  \[
\pi_*^k \in \Gamma^{\mr{opt}}(\nu^k_0,\nu_1)\,,\]
where $\Gamma^{\mr{opt}}(\mu,\nu)$ is the set of optimal transport coupling for the transport from $\mu$ to $\nu$. 

Let us now consider the measure
\begin{equation}\label{eq:nu0t}
\overline{\nu}_0(t) \coloneqq \Big(\sum_k \lambda ^k T_{\pi^k_t} \Big)_\# \nu_1,
\end{equation}
where for any coupling $\pi^k \in \Gamma(\nu_1,\nu_0^k)$,
\begin{equation}\label{eq:Tkdef} \quad T_{\pi^k}(x_1) \coloneqq \mr{bary}(\pi^k_{x_1}) ~\text{ for $\nu_1$-a.e. }x_1.
\end{equation}
By Proposition \ref{prop:projwot} and Theorem \ref{th:kto1proj} (see also Remark \ref{rem:uniqueness}), it is easy to check that $\overline{\nu}_0(t)$ is uniquely defined for all $t>1$. Moreover, the discussion above and a standard selection argument imply the following result:

\begin{proposition} 
Let us define
\begin{equation}\label{eq:nubar}
\overline{\nu} \coloneqq  \arg\min \Big\{ \int |x|^2 \ed \overline{\nu}_0(x) \, ;\;  \overline{\nu}_0 = \Big(\sum_k \lambda^k T_{\pi^k}\Big)_\# \nu_1\,, \,
\pi^k \in \Gamma^{\mr{opt}}(\nu^k_0,\nu_1) \Big\}.
\end{equation}
where $T_{\pi^k}$ is defined as in \eqref{eq:Tkdef}. Then, $\overline{\nu}_0(t) \rightharpoonup \overline{\nu}$ as $t\downarrow 1$. In particular, if 
$\nu_1$ is absolutely continuous, then $\overline{\nu}_0(t)$ converges  to the unique linearized barycenter of $\nu_0^k$ associated with the weights $\lambda^k$ and with base measure $\nu_1$.
\end{proposition}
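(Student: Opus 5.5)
The argument is a $\Gamma$-convergence/selection argument for the perturbed problem \eqref{eq:Bproblem}, run at the level of the barycenter maps $\overline{Z}=\sum_k\lambda^k T_{\pi^k}$ rather than the couplings. Fix a subsequence $t_n\downarrow 1$ with $\pi^k_{t_n}\rightharpoonup\pi^k_*$, as in \eqref{eq:pilim}; we already know $\pi^k_*\in\Gamma^{\mr{opt}}(\nu_0^k,\nu_1)$. It then suffices to show that every such limit produces $\overline{\nu}$, and that $\overline{\nu}_0(t_n)\rightharpoonup\overline{\nu}$ along the subsequence. Since every sequence has such a subsequence, the whole family then converges.

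\textbf{Step 1: first-order comparison.} Take any competitor $(\pi^k)_k$ with $\pi^k\in\Gamma^{\mr{opt}}$. Then $B(\pi^k)=\min B\le B(\pi^k_t)$, and \eqref{eq:bbb} gives $M(\pi^k_t)\le M(\pi^k)$ for all $t>1$. Because $\int|x|^2\ed(\sum_k\lambda^kT_{\pi^k})_\#\nu_1=M(\pi^k)$, this already says
\[
\int|x|^2\ed\overline{\nu}_0(t)\le \inf\{\text{value in }\eqref{eq:nubar}\}\,.
\]

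\textbf{Step 2: passing to the limit in the barycenter maps.} I would view $\overline{Z}_t:=\sum_k\lambda^kT_{\pi^k_t}$ as elements of $L^2(\nu_1;\mathbb{R}^d)$. They are bounded there, since $\|T_{\pi^k}\|_{L^2(\nu_1)}^2\le M_2(\nu_0^k)$ by Jensen, so along a further subsequence $\overline{Z}_{t_n}\rightharpoonup \overline{Z}_*$ weakly in $L^2(\nu_1)$. Weak convergence of the plans gives $\int\langle \phi(x_1), T_{\pi^k_t}(x_1)\rangle\ed\nu_1\to\int\langle\phi,T_{\pi^k_*}\rangle\ed\nu_1$ for bounded continuous $\phi$, and the uniform second moment bound handles the test function $\langle\phi(x_1),z\rangle$, which grows linearly in $z$. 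Hence $\overline{Z}_*=\sum_k\lambda^kT_{\pi^k_*}$. Weak lower semicontinuity of the norm and Step 1 then give
\[
M(\pi^k_*)\le\liminf_n M(\pi^k_{t_n})\le\limsup_n M(\pi^k_{t_n})\le \inf\eqref{eq:nubar}\,.
\]
Since $(\pi^k_*)_k$ is itself admissible, equality holds throughout. So $(\pi^k_*)$ is a minimizer, and $\|\overline{Z}_{t_n}\|\to\|\overline{Z}_*\|$, which upgrades weak to strong convergence in $L^2(\nu_1)$. Strong $L^2$ convergence of maps implies $W_2$ convergence, hence weak convergence, of the pushforwards: $\overline{\nu}_0(t_n)\to(\overline{Z}_*)_\#\nu_1$.

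\textbf{Step 3: uniqueness of the limit.} This is the main obstacle: the argmin in \eqref{eq:nubar} is over measures, and we must show it is a single measure. The set of vectors $(\pi^k)_k$ with $\pi^k\in\Gamma^{\mr{opt}}$ is convex, since optimal plans form a convex face. The map $(\pi^k)\mapsto\overline{Z}=\sum_k\lambda^kT_{\pi^k}\in L^2(\nu_1)$ is linear, because the barycenter of a disintegration is linear in the plan. Hence the set of admissible $\overline{Z}$ is convex, and $\|\overline{Z}\|^2_{L^2(\nu_1)}$ is strictly convex on it. The minimizing map $\overline{Z}_*$ is therefore unique, so $\overline{\nu}$ is well defined and every subsequential limit equals it. I would also need to check that this minimum is attained, which follows from compactness of $\Gamma^{\mr{opt}}$ and the same lower semicontinuity argument as in Step 2.

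\textbf{Step 4: the absolutely continuous case.} If $\nu_1$ is absolutely continuous, each $\Gamma^{\mr{opt}}(\nu_0^k,\nu_1)$ is the singleton induced by the Brenier map $T^k$, and $T_{\pi^k}=T^k$. Then $\overline{\nu}=(\sum_k\lambda^kT^k)_\#\nu_1$, which is the linearized barycenter.
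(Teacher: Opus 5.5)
Your proof is correct and takes essentially the same route as the paper: you compare with optimal competitors to get $M(\pi^k_t)\le M(\pi^k)$, use lower semicontinuity to show that $\sum_k\lambda^k T_{\pi^k_*}$ is a minimizer and that norm convergence upgrades weak to strong convergence in $L^2(\nu_1)$, and get uniqueness from strict convexity in the variable $\sum_k\lambda^k T_{\pi^k}$, which is affine in the plans. The only cosmetic difference is that you prove the lower semicontinuity of $M$ directly, from the weak $L^2(\nu_1)$ convergence of the barycentric maps (justified by uniform integrability of $|z|$ under the fixed marginal $\nu_0^k$), whereas the paper cites Gozlan et al.\ for it.
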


\begin{proof} 
Keeping the same notation as above, let us further introduce
\[
T^n \coloneqq \sum_k \lambda^k T_{\pi^k_{t_n}} \,, \quad T^* \coloneqq \sum_k \lambda^k T_{\pi^k_*}. 
\]
By construction,  $T^n \rightharpoonup T^*$ weakly in $L^2(\nu_1)$ as $n \rightarrow \infty$. We start by showing that $T^*_\# \nu_1$ is a minimizer of \eqref{eq:nubar} and that $T^n \rightarrow T^*$ strongly in $L^2(\nu_1)$.
Using the fact that $B(\pi^k_t) \geq B(\pi^k_*)$ for all $t>1$, equation \eqref{eq:bbb} implies that
\[
B(\pi^k_*) + \frac{t-1}{2t} M(\pi^k_t)  \leq B(\pi^k) +  \frac{t-1}{2t} M(\pi^k).
\]
For any $(\pi^k)_k$ such that $\pi^k \in \Gamma^{\mr{opt}}(\nu_1,\nu_0^k)$, $B(\pi^k_*) = B(\pi^k)$. Hence, for any such $\pi^k$,
\[
M(\pi^k_t) \leq  M(\pi^k)\,.
\]
This implies that
\begin{equation}\label{eq:liminf}
\limsup_{n \rightarrow \infty} \| T^n\|_{L^2(\nu_1)}^2 \leq  \| T^*\|_{L^2(\nu_1)}^2
\end{equation}
Furthermore, since $M$ is lower-semicontinuous (this is a consequence of Proposition 9.4 in \cite{gozlan2017kantorovich}) we also have that
\begin{equation}\label{eq:limsup}
 \| T^*\|_{L^2(\nu_1)}^2 \leq \liminf_{n \rightarrow \infty}   \| T^n\|_{L^2(\nu_1)}^2  \leq  M(\pi^k)
\end{equation}
for any $\pi^k \in \Gamma^{\mr{opt}}(\nu_1,\nu_0^k)$.  Hence, $T^*_\# \nu_1$ is a minimizer of \eqref{eq:nubar}. Equations \eqref{eq:liminf} and \eqref{eq:limsup} also imply that $T^n \rightarrow T^*$ strongly in $L^2(\nu_1)$. Hence, $\overline{\nu}_0(t_n) = T^n_\# \nu_1 \rightharpoonup T^*_\# \nu_1$ as $n \rightarrow \infty$.

We conclude by observing that problem \eqref{eq:nubar} admits a unique minimizer by strong convexity with respect to the variable $T = \sum_k\lambda^k T_{\pi^k}$, which is linear in $\pi^k$. This fact implies that $\overline{\nu}_0(t)$ converges without the need of extracting a subsequence.
\end{proof}

\section{General signed barycenters}\label{sec:general}

In this section, we consider the general signed barycenter problem \eqref{eq:signed}. For given $\nu_1^1,\ldots,\nu_1^L \in \mc{P}_2(\mathbb{R}^d)$ and $\nu_0^1, \ldots, \nu_0^K \in \mc{P}_2(\mathbb{R}^d)$, we consider the following problem
\begin{equation}\label{eq:KtoL}
\inf_\mu \left\{ t \sum_{l=1}^L\lambda_1^l\frac{W^2_2(\mu,\nu_1^l)}{2} - (t-1) \sum_{k=1}^K \lambda_0^k \frac{W^2_2(\mu,\nu_0^k)}{2} \right\},
\end{equation}
where $t>1$, $\lambda_0^k,\lambda_1^l>0$ and $\sum_k \lambda_0^l=\sum_l \lambda_1^l =1$. As in the previous section, the choice of the coefficients is not restrictive and will help us draw some analogies with the extrapolation problem. 

\subsection{Existence and non-uniqueness} We start by discussing existence and uniqueness of solutions to \eqref{eq:KtoL}, leveraging the results in \cite{tornabene2024generalized}, and using the same 
point of view of the previous sections. 

\begin{lemma} \label{lem:KtoLexistence} Problem \eqref{eq:KtoL} admits at least one solution. Moreover, $\mu$ is a solution if and only if $\mu = X_\# P$ where $X$ solves
\begin{equation}\label{eq:KtoLX}
\inf_{X} \left\{ \frac{1}{2} \|X\|^2_{L^2} -t \sum_l \lambda_1^l F_{\nu_1^l}(X)  + (t-1) \sum_{k}\lambda_0^k F_{\nu_0^k}(X) \right\}\,.
\end{equation}
\end{lemma}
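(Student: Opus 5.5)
The plan is to prove the two claims separately: first the reformulation over random variables, which is essentially bookkeeping, and then existence, which needs a compactness and continuity argument because the objective is no longer convex in $X$.

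\textbf{Reformulation.} I would follow the proof of Lemma \ref{lem:Kto1existence}. By the relation between $W_2^2$ and $\mr{MCov}$, the objective of \eqref{eq:KtoL} at $\mu$ equals
\[
J(\mu)\coloneqq \frac{1}{2}M_2(\mu) - t\sum_l\lambda_1^l\,\mr{MCov}(\mu,\nu_1^l) + (t-1)\sum_k\lambda_0^k\,\mr{MCov}(\mu,\nu_0^k) + C .
\]
This uses $t\sum_l\lambda_1^l-(t-1)\sum_k\lambda_0^k=1$, and the constant $C$ depends only on the second moments of the data. By definition \eqref{eq:Fnu}, $F_{\nu}(X)=\mr{MCov}(X_\#P,\nu)$, and moreover $\|X\|^2_{L^2}=M_2(X_\#P)$. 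Hence the objective $G(X)$ of \eqref{eq:KtoLX} satisfies $G(X)=J(X_\#P)-C$, so it depends on $X$ only through its law. Since every $\mu\in\mc{P}_2(\mathbb{R}^d)$ is the law of some $X\in L^2$, the two infima differ by $C$. Therefore $\mu$ minimizes \eqref{eq:KtoL} iff some, equivalently every, $X\sim\mu$ minimizes \eqref{eq:KtoLX}. In contrast with Lemma \ref{lem:Kto1existence}, no exchange of suprema is needed here, because each $F$ already fixes $X$.

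\textbf{Existence: coercivity.} I would work on the measure side with $J$. The Cauchy--Schwarz bound $\mr{MCov}(\mu,\nu)\le M_2(\mu)^{1/2}M_2(\nu)^{1/2}$ gives
\[
J(\mu)\ge \tfrac12 M_2(\mu)-c\,M_2(\mu)^{1/2}-c'
\]
for constants $c,c'$ depending on the data; the $\nu_0^k$ terms are $\ge -c\,M_2(\mu)^{1/2}$ as well, since $|\mr{MCov}|$ obeys the same bound. So $J$ is bounded below. Any minimizing sequence $(\mu_n)$ then has bounded second moments, hence is tight, and up to a subsequence $\mu_n\rightharpoonup\mu$ narrowly, with $M_2(\mu)\le\liminf_n M_2(\mu_n)$ by lower semicontinuity.

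\textbf{Existence: continuity of $\mr{MCov}$ (main obstacle).} The key step, and the main obstacle, is to show that for fixed $\nu\in\mc{P}_2(\mathbb{R}^d)$ the map $\mu\mapsto\mr{MCov}(\mu,\nu)$ is continuous along narrowly convergent sequences with bounded second moments. This is needed in both directions, because the $\nu_1^l$ terms enter with a negative sign, so lower semicontinuity of the objective is not automatic. I would argue as follows.
\begin{itemize}
\item Take optimal plans $\gamma_n\in\Gamma(\mu_n,\nu)$. The family is tight, so a subsequence converges to some $\gamma\in\Gamma(\mu,\nu)$.
\item Split $\langle x,y\rangle=\langle x,y\rangle\mathbf{1}_{|y|\le R}+\langle x,y\rangle\mathbf{1}_{|y|>R}$. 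By Cauchy--Schwarz, the second part is bounded by $\sup_n M_2(\mu_n)^{1/2}\big(\int_{|y|>R}|y|^2\ed\nu\big)^{1/2}$, which is small uniformly in $n$ because $\nu$ is fixed.
\item In the first part the integrand is bounded by $R|x|$, which is uniformly integrable because the second moments of $\mu_n$ are bounded. So after smoothing the cutoff, its integral passes to the limit.
\item This gives $\limsup_n\mr{MCov}(\mu_n,\nu)\le\int\langle x,y\rangle\ed\gamma\le\mr{MCov}(\mu,\nu)$.
\item For the reverse inequality, take an optimal plan for $(\mu,\nu)$, glue it with plans $\sigma_n\in\Gamma(\mu,\mu_n)$ that converge narrowly to the identity coupling $(\mathrm{Id},\mathrm{Id})_\#\mu$, and apply the same uniform-integrability estimates.
\end{itemize}

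\textbf{Conclusion.} With this continuity, $J(\mu)\le\liminf_n J(\mu_n)$, so the narrow limit $\mu$ is a minimizer of \eqref{eq:KtoL}. By the reformulation step, any $X\sim\mu$ solves \eqref{eq:KtoLX}.
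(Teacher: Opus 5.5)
Your reformulation step is the paper's argument (expand the squares, use $t\sum_l\lambda_1^l-(t-1)\sum_k\lambda_0^k=1$ and $F_\nu(X)=\mathrm{MCov}(X_\#P,\nu)$). For existence, however, the paper gives no proof of its own. It invokes the existence result of Tornabene et al.\ (Theorem 1.1 there), and only remarks that a direct-method argument on \eqref{eq:KtoLX} is not easy, since the objective is not weakly l.s.c.\ in $L^2$.

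Your proof is correct and avoids exactly that obstruction by changing topology. Weak convergence $X_n\rightharpoonup X$ in $L^2$ does not control the laws. Along it the convex functions $F_{\nu_1^l}$ are only l.s.c., so the terms $-tF_{\nu_1^l}$ destroy lower semicontinuity. For narrow convergence of laws with bounded second moments, by contrast, $\mu\mapsto\mathrm{MCov}(\mu,\nu)$ is continuous for fixed $\nu\in\mathcal{P}_2(\mathbb{R}^d)$. Your split proves this correctly: the tails in $y$ are controlled by the fixed $\nu$, and the truncated part is dominated by $R|x|$. The only non-continuous piece, $\tfrac12 M_2(\mu)$, carries the positive total coefficient and is narrowly l.s.c.\ and coercive.

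The citation buys brevity. Your argument is self-contained, and it explains why the lift to $L^2$ is the wrong setting for existence once several coefficients are positive, even though in the $K$-to-$1$ case it gave existence and uniqueness at once.

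Two small points should be made explicit. In the upper bound, first pass to a subsequence realizing the $\limsup$, and only then extract a convergent subsequence of $(\gamma_n)$. Second, you need to justify that couplings $\sigma_n\in\Gamma(\mu,\mu_n)$ converging to $(\mathrm{Id},\mathrm{Id})_\#\mu$ exist. One way: take optimal plans for the bounded cost $\min(|x-x'|,1)$; its optimal value metrizes narrow convergence, so every limit point is concentrated on the diagonal.
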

\begin{proof} The equivalence of problem \eqref{eq:KtoL} with \eqref{eq:KtoLX} follows from expanding the square in the definition of $W_2^2$ and using the definition of $F_\nu$, as in the previous section. 
Existence of solutions for problem \eqref{eq:KtoL} has been established in \cite[Theorem 1.1]{tornabene2024generalized}, and directly implies existence of solutions to \eqref{eq:KtoLX}. We remark that it is not easy to deduce existence for \eqref{eq:KtoLX} directly as in Lemma \ref{lem:Kto1existence}, as the objective is not weakly l.s.c.\ due to the terms associated with the measures $\nu_1^l$.
\end{proof}

The main difference between \eqref{eq:KtoLX} and \eqref{eq:Kto1X} is that the function minimized in \eqref{eq:KtoLX} is not convex. This is because we cannot replace all the $F_{\nu_1^l}$ by inner products $\langle Y^l,\cdot\rangle $ with $Y^l\sim \nu_1^l$ being fixed random variables. 
As a consequence, we cannot expect minimizers to be unique in general. In fact, \cite{tornabene2024generalized} provided a counterexample with two positive terms and all the measures are discrete. Here, we show that non-uniqueness may arise even if all the measures are absolutely continuous (in sharp contrast with classical barycenters with positive coefficients, in which case a single absolutely continuous measure guarantees absolute continuity of the barycenter \cite{agueh2011barycenters}). The configuration we use is illustrated in Figure \ref{fig:nonunique} and is inspired by that used in \cite{tornabene2024generalized} (see also the examples \cite{bolbotowski2025bi} for  related constructions using discrete measures). This result is proved in Appendix \ref{app:counter} using the characterization of minimizers discussed in the next section.  Note that the same counterexample shows that the set of minimizers may not be convex with respect to linear interpolations as it is the case for classical Wasserstein barycenters.

\begin{remark} In \cite{jacobs2026signed} the authors derive a saddle point formulation of the signed barycenter problem (different from the one we derive below in Section \ref{sec:saddle}) and show that if a saddle point exists and one of the measures with positive coefficient is absolutely continuous, then the barycenter is unique. The example in Figure \ref{fig:nonunique} (see also Appendix \ref{app:counter}) implies that one cannot expect such saddle points to exist in general.    
\end{remark}

\begin{figure}
\begin{tikzpicture}[scale=.7] 
\definecolor{softblue}{RGB}{60,140,255}
\definecolor{softred}{RGB}{240,80,80}
\definecolor{softgray}{RGB}{60,180,90}

\foreach \x/\y in {0.5/0.5,0.5/-0.5,-0.5/-0.5,-0.5/0.5} {
    \fill[softblue, opacity=1] (\x,\y) circle (0.5);
}

\node[softblue] at (-1.07,1.07) {$\nu_0$};

\fill[softred, opacity=1] (3,0) circle (0.25);
\fill[softred, opacity=1] (-3,0) circle (0.25);
\fill[softgray, opacity=1] (0,3) circle (0.25);
\fill[softgray, opacity=1] (0,-3) circle (0.25);

\node[softred] at (2.48,0) {$\nu_1^1$};
\node[softgray] at (.6,-2.7) {$\nu_1^2$};

\end{tikzpicture}
\caption{We show that for
this configuration for $t\gg 1$ the solution to the signed barycenter with $\lambda_1^1=\lambda_1^2 = 1/2$ cannot not be invariant by rotations of $\pi/2$ and hence it is not unique.
}\label{fig:nonunique}
\end{figure}

\subsection{Necessary optimality conditions} The following proposition provides a characterization of signed barycenters in terms of the $K$-to-1 extrapolation problem. We will use this characterization to construct a counterexample to uniqueness in the case of absolutely continuous data (see Figure \ref{fig:nonunique} and Appendix \ref{app:counter}). Such a  characterization will also be useful in Section \ref{sec:signeddc} for the construction of an algorithm to solve problem \eqref{eq:KtoL} numerically. 

\begin{proposition}\label{prop:crit} Let $X^* \sim \mu$ solve \eqref{eq:KtoLX}. Then for all $Y^l \sim \nu_1^l$ with $l=1,\ldots,L$, such that $(X^*,Y^l)_\# P$ is optimal for the transport from $\nu_1^l$ to $\mu$, $\mu$ is the unique solution of problem \eqref{eq:Kto1} with 
\begin{equation}\label{eq:nu1}
\nu_1 \coloneqq \overline{Y}_\# P\,, \qquad \overline{Y} \coloneqq \sum_{l=1}^L \lambda_1^l Y^l
\end{equation}
\end{proposition}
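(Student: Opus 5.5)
The idea is to freeze the concave part of \eqref{eq:KtoLX} at the optimal couplings, so that we obtain a convex majorant that touches the objective at $X^*$. Denote by $J(X)$ the objective in \eqref{eq:KtoLX}. Fix $Y^l \sim \nu_1^l$ with $(X^*,Y^l)_\# P$ optimal. By definition \eqref{eq:Fnu},
\[
F_{\nu_1^l}(X) \ge \langle X, Y^l\rangle_{L^2} \quad \text{for all } X\in L^2,
\]
with equality at $X=X^*$ by the optimality of the coupling. Since $t\lambda_1^l>0$, we get $-t\sum_l \lambda_1^l F_{\nu_1^l}(X) \le -t\langle X,\overline{Y}\rangle_{L^2}$. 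Set
\[
G(X) \coloneqq \frac12\|X\|^2_{L^2} - t\langle X,\overline{Y}\rangle_{L^2} + (t-1)\sum_k \lambda_0^k F_{\nu_0^k}(X).
\]
Then $J\le G$ on all of $L^2$, and $J(X^*)=G(X^*)$.

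Next I compare minima. For any $X$ we have $G(X^*) = J(X^*) \le J(X)$, but this only gives $J(X)\le G(X)$, which points the wrong way. The correct chain is $G(X^*)=J(X^*)=\min J$, and I need $G(X)\ge G(X^*)$. A global majorant touching at a minimizer of $J$ does not make $X^*$ a minimizer of $G$, so this route fails as stated. I would instead use first-order conditions.

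Since $X^*$ minimizes $J = G - h$, where
\[
h(X)=t\sum_l\lambda_1^l F_{\nu_1^l}(X) - t\langle X,\overline Y\rangle_{L^2}
\]
is convex with $h\ge 0$ and $h(X^*)=0$, the standard DC optimality condition (equivalently, Toland's argument of Section~\ref{sec:toland}) gives $\partial h(X^*)\subset \partial G(X^*)$. It is easy to check that $0\in\partial h(X^*)$, because $X^*$ minimizes the nonnegative function $h$. Hence $0\in\partial G(X^*)$, and since $G$ is convex, $X^*$ minimizes $G$. To justify the inclusion cleanly I would argue directly. For $p\in\partial h(X^*)$ and any $X$,
\[
G(X)-G(X^*) \ge h(X)-h(X^*) \ge \langle p, X-X^*\rangle,
\]
so $p\in\partial G(X^*)$. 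Taking $p=0$ finishes this step.

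Finally, $G$ is exactly the functional \eqref{eq:Kto1X} with $Y$ replaced by $\overline{Y}$, which is a random variable with law $\nu_1$ as in \eqref{eq:nu1}. By Lemma~\ref{lem:Kto1existence}, its unique minimizer has law equal to the unique solution of \eqref{eq:Kto1} with this $\nu_1$. Since $X^*$ minimizes $G$, $\mu = X^*_\# P$ is that solution. The main subtlety is the subdifferential step. The point to check carefully is that $h(X)\ge h(X^*)$ holds for every $X$, which follows from $F_{\nu_1^l}(X)\ge\langle X,Y^l\rangle_{L^2}$, and that $J(X)\ge J(X^*)$ is valid globally, as it is for a minimizer.
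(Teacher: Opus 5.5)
Your final argument is correct and is essentially the paper's proof. With $p=0$, your inequality $G(X)-G(X^*)\ge h(X)-h(X^*)\ge 0$ is just $G(X)\ge J(X)\ge J(X^*)=G(X^*)$. This is exactly how the paper concludes, from $\sum_l\lambda_1^l F_{\nu_1^l}(X)\ge\langle X,\overline{Y}\rangle_{L^2}$ (with equality at $X^*$) and the minimality of $X^*$, before invoking Lemma~\ref{lem:Kto1existence}.

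Your claim that the majorant route ``fails'' is, however, wrong. You already had $G(X^*)=J(X^*)\le J(X)$ and $J(X)\le G(X)$, and chaining them gives $G(X^*)\le G(X)$: a majorant that touches $J$ at a global minimizer of $J$ is always minimized there. The subdifferential detour is therefore unnecessary.
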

\begin{proof} Let $Y^l$ and $\overline{Y}$ be defined as in the statement. In view of Lemma \ref{lem:Kto1existence}, we just need to show that $X^*$  minimizes
\[
G(X) \coloneqq \frac{1}{2} \|X\|^2_{L^2} -t \langle X, \overline{Y}\rangle_{L^2}  + (t-1) \sum_{k}\lambda_0^k F_{\nu_0^k}(X).
\]
This is a direct consequence of the definition of $F_{\nu_1^l}$, which implies that for all $X \in L^2$
\[
 \sum_{l=1}^L \lambda_1^l F_{\nu_1}^l(X) \geq  \sum_{l=1}^L \lambda_1^l \langle X, {Y}^l\rangle   = \langle X,\overline{Y}\rangle \,.
\]
Using the fact that $X^*$ is a solution, this inequality implies that $G(X) \geq G(X^*)$ for all $X\in L^2$, and we are done.

\end{proof}

\begin{remark} The definition of $\nu_1$ in \eqref{eq:nu1} and the optimality of the plans $(X^*,Y^l)_\# P$ are equivalent to saying that $\nu_1$ is a linearized Wasserstein barycenter of $\nu_1^1, \ldots, \nu_1^L$ with base $\mu$. Proposition \ref{prop:crit} says that any solution to \eqref{eq:KtoL} is then a composition of a linearized barycenter and a $K$-to-1 extrapolation.
\end{remark}

\subsection{A saddle point formulation} \label{sec:saddle} The result of the previous section can also be seen through the lenses of Toland duality. To see this, we first use Lemma \ref{lem:mcov} to write \eqref{eq:KtoLX} as follows: 
\[
\inf_X \inf_{Z_1^l} \left\{\frac{1}{2} \|X\|^2_{L^2}
-t \Big\langle X, \sum_l \lambda_1^l Z_1^l \Big\rangle_{L^2} + (t-1) \sum_k \lambda_0^k F_{\nu_0^k}(X) ~;~ (Z_1^l)_\# P \preceq_C \nu_1^l   \right\}\,.
\]
Swapping the two infima, and then proceeding as in Section \ref{sec:kto1}, we find that this is equivalent to the following saddle point problem:
\begin{equation}\label{eq:saddle2}
\inf_{Z_1^l} \sup_{Z_0^k} \left\{ - \Big\| t \sum_l \lambda_1^l Z_1^l - (t-1) \sum_k \lambda_0^k Z_0^k \Big \|^2_{L^2} ~;~ (Z_1^l)_\# P \preceq_C \nu_1^l \,, ~(Z_0^k)_\# P \preceq_C \nu_0^k  \right\}\,.
\end{equation}
This is precisely the Toland dual of problem \eqref{eq:KtoLX} (viewed as a different of convex functions).
Note that in this problem the objective is not convex in $Z_1^l$ so here we cannot swap the infimum and the supremum. 

Now, suppose now that $X^*$ is a solution and let $Y^l \sim \nu_1^l$ be such that 
\[
F_{\nu_1^l}(X^*) = \langle X^*, Y^l\rangle_{L^2} \,,
\]
or in other words $Y^l \in \partial F_{\nu_1^l}(X^*)$.
Then
\[
\begin{aligned}
\eqref{eq:KtoLX} &= \frac{1}{2} \|X^*\|^2_{L^2}
-t \Big\langle \sum_l \lambda_1^l Y^l,X^* \Big\rangle_{L^2} + (t-1) \sum_k \lambda_0^k F_{\nu_0^k}(X^*)
\\&\geq 
\inf_X \left\{\frac{1}{2} \|X\|^2_{L^2}
-t \Big\langle \sum_l \lambda_1^l Y^l,X \Big\rangle_{L^2} + (t-1) \sum_k \lambda_0^k F_{\nu_0^k}(X)\right\}
\\&\geq 
\sup_{Z_0^k} \left\{ - \Big\| t \sum_l \lambda_1^l Y^l - (t-1) \sum_k \lambda_0^k Z_0^k \Big \|^2_{L^2} ~; ~(Z_0^k)_\# P \preceq_C \nu_0^k  \right\} \geq \eqref{eq:saddle2}\,.
\end{aligned}
\]
But since \eqref{eq:KtoLX}=\eqref{eq:saddle2}, $Z_1^l = Y_1^l$ solves \eqref{eq:saddle2} and $X_\#^* P$ solves problem \eqref{eq:Kto1} with the data given by equation \eqref{eq:nu1}, so that we recover the result of Proposition \ref{prop:crit}.

\begin{remark}
Clearly, one can also write a saddle point formulation in terms of Brenier potentials, analogous to \eqref{eq:dualBreniermm}, by using Toland duality with respect to the linear structure on $\mc{P}_2(\mathbb{R}^d)$ as explained in Section \ref{sec:toland}. In fact, problem \eqref{eq:KtoL} is equivalent to minimizing the difference of the convex functions:
\[
\mu \mapsto \int \frac{|x|^2}{2}\ed \mu - t \sum_l \lambda^l_1 \operatorname{MCov}(\mu,\nu_1^l) \quad \text{and} \quad \mu \mapsto -(t-1)\sum_k \lambda_0^k \operatorname{MCov}(\mu,\nu_0^k).
\]
Formally, this is then equivalent to minimizing the difference of their Legendre transforms over convex functions $u:\mathbb{R}^d\rightarrow \mathbb{R}$, which are given by the inf convolution of the Legendre transform of each term:
\[
u \mapsto \sup \left\{ -t \sum_l \lambda_1^l\int v_l^* \ed \nu_1^l \;;\; t\sum_l \lambda_1^l v_l  + \frac{|x|^2}{2} =u \right\}
\]
and
\[
u \mapsto \sup \left\{ -(t-1) \sum_l \lambda_0^k \int u_k^* \ed \nu_0^k \;;\; t\sum_l \lambda_0^k u_k =u \right\}\,,
\] 
respectively. Note that in the $K$-to-1 case, we retrieve \eqref{eq:dualBreniermm} from this formulation by setting $w_k = u_k^*$ and then using equation \eqref{eq:ustarD}.
We do not detail this derivation here as such a dual formulation will not be used in the following. 
\end{remark}

\begin{remark}[Signed population barycenters] The results of Section \ref{sec:kto1} on the $K$-to-1 case and those in this section could be directly generalized to the case where the data of the problem, $(\lambda_0^k,\nu_0^k)_k$ and $(\lambda_1^l,\nu_1^l)_l$, is replaced by more general measures 
$\mathbb{P}_0,\mathbb{P}_1 \in \mc{P}_2(\mc{P}_2(\mathbb{R}^d))$. This leads to a population version of the signed barycenter problem:
\[
\inf_\mu \left\{ t \int \frac{W^2_2(\mu,\nu_1)}{2} \ed \mathbb{P}_1(\nu_1) - (t-1) \int \frac{W^2_2(\mu, \nu_0)}{2} \ed \mathbb{P}_0(\nu_0)\,\right\}.
\]
We focused on the discrete setting to simplify the exposition and because it is the most relevant to computations. 
\end{remark}

\section{Numerical approaches}\label{sec:numerical}
In this section we discuss the numerical computation of signed barycenters. All the schemes presented in this section are based on entropic regularization, which we briefly recall in Section \ref{sec:entropic}. While in standard OT this is usually employed with fixed grids, because of the structure of the signed barycenter problem, it is natural to take as main unknown the discrete support of the unknown barycenter. For the $K$-to-1 extrapolation problem,  this can be done without breaking the convexity of the problem. 

\subsection{Entropic regularization and debiasing} \label{sec:entropic}  Let $a = (a_1,\ldots,a_N) \in \mathbb{R}^N_{>0}$ and  $b = (b_1,\ldots,b_M) \in \mathbb{R}^M_{>0}$ two vector of weights such that $\sum_i a_i = \sum_j b_j =1$.
Given $X = (x_1,\ldots,x_N) \in \mathbb{R}^{dN}$ and $Y = (y_1,\ldots,y_M) \in \mathbb{R}^{dM}$, consider the discrete probability measures
\[
\mu = \sum_{i=1}^N a_i \delta_{x_i},
\qquad
\nu = \sum_{j=1}^M b_j \delta_{y_j}\,.
\]
The set of couplings between $\mu$ and $\nu$ can be identified with
\[
\Gamma(a,b)
\coloneqq 
\left\{
\gamma \in \mathbb{R}_{\geq 0}^{M \times N} :
\sum_{j=1}^M \gamma_{ij} = a_i,\;
\sum_{i=1}^N \gamma_{ij} = b_j
\right\}\,.
\]
The relative entropy of $\gamma$ with respect to the product measure $\mu \otimes \nu$ is given by
\[
H(\gamma|\mu \otimes \nu) \coloneqq \sum_{ij} \left[ \log\left(\frac{\gamma_{ij}}{a_i b_j}\right) -1\right] \gamma_{ij}\,.
\]

The entropic \(W_2\) transport problem is defined by adding the relative entropy as a regularization term. Specifically, for a given regularization parameter \(\varepsilon>0\), we set
\begin{equation}\label{eq:w2eps}
 {\mr{OT}}_{\varepsilon}(\mu,\nu) = \inf_{\gamma \in \Gamma(a,b)} \left\{ \sum_{ij} |x_i-y_j|^2 \gamma_{ij} + \varepsilon H(\gamma|\mu \otimes \nu)\right\} \,.
\end{equation}
The main advantage of adding the relative entropy is that this gives rise to a simple and efficient algorithm to compute solutions numerically, generally referred to as Sinkhorn algorithm. This can be seen as an alternate maximization scheme on the dual problem to \eqref{eq:w2eps}, which is given by an optimization problem over potentials $\varphi \in \mathbb{R}^M$, $\psi \in \mathbb{R}^N$,
\begin{equation}\label{eq:dualoteps}
 \frac{{\mr{OT}}_{\varepsilon}(\mu,\nu)}{2} = \sup_{\varphi,\psi}\left\{ \sum_{ij}\left[ - \varepsilon\exp\left( - \frac{|x_i -y_j|^2}{2\varepsilon} + \frac{\varphi_i +\psi_j}{\varepsilon} \right) + \varphi_i  + \psi_j \right]a_i b_j  \right\}\,.
\end{equation}

One of the main drawbacks of $\mr{OT}_{\varepsilon}$ is that it is not a squared distance, and in particular we have $\mr{OT}_{\varepsilon}(\mu,\mu)>0$. To fix this issue, \cite{genevay2018learning} introduced a debiased version which they named Sinkhorn divergence. This is defined as follows:
\[
\mr{S}_\varepsilon(\mu,\nu) \coloneqq  \mr{OT}_\varepsilon(\mu,\nu) - \frac{ 1}2\mr{OT}_\varepsilon(\mu,\mu)  - \frac{ 1}2 \mr{OT}_\varepsilon(\nu,\nu) \,.
\]
Clearly, $\mr{S}_\varepsilon(\mu,\mu) =0$. More precisely, $\mr{S}_\varepsilon$ is a positive-definite and jointly convex function on probability measures \cite{feydy2019interpolating}. Note, however, that here convexity refers to the linear structure on probability measures.

We now collect some further important properties of $\mr{OT}_\varepsilon$ and $\mr{S}_\varepsilon$ that will be useful in the following.
Let us define the weighted norm
\[
\|X\|_a^2 \coloneqq \sum_i|x_i|^2 a_i\,.
\]
The function 
\[
X \mapsto \frac{\|X\|^2_a}{2} -\frac{1}2 \mr{OT}_\varepsilon\left( \sum_i a_i \delta_{x_i}, \nu \right)
\]
is  convex, since by definition of $\operatorname{OT}_\varepsilon$, it can be expressed as a supremum of linear functions in $X$. This generalizes the fact that $F_\nu$ is convex to the entropic case. 
Clearly, to establish a similar result for the Sinkhorn divergence we need to determine whether the entropic self-bias $X\mapsto \mr{OT}_\varepsilon(\mu,\mu)$ is semi-convex (uniformly in $\varepsilon$). This is established in the following lemma.

\begin{lemma}\label{lem:convdebias}
The function
\[
X \mapsto \alpha \frac{\|X\|^2_a}{2} + \frac{1}{2}\mr{OT}_\varepsilon\left( \sum_i a_i \delta_{x_i},\sum_i a_i \delta_{x_i}\right)
\]
is convex for $\alpha\geq 4 e^{-3/2}$.
\end{lemma}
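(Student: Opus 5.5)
The plan is to treat the entropic self-transport as a supremum over dual potentials and to control its Hessian in the particle positions $X$ through the envelope theorem. By \eqref{eq:dualoteps} with $\nu=\mu=\sum_i a_i\delta_{x_i}$, the dual problem is symmetric and strictly concave up to constants. Its maximizer can therefore be taken symmetric: there is a single potential $f\in\mathbb{R}^N$ with
\[
\frac{\mr{OT}_\varepsilon(\mu,\mu)}{2} = \sup_{f}\, L(X,f),\qquad L(X,f)\coloneqq 2\sum_i f_i a_i - \varepsilon\sum_{ij} \exp\Big(\frac{f_i+f_j}{\varepsilon}-\frac{|x_i-x_j|^2}{2\varepsilon}\Big)a_ia_j .
\]
The optimal $f$ depends smoothly on $X$ by the implicit function theorem, since $\partial^2_{ff}L$ is negative definite on the relevant subspace. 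We then have
\[
D^2_X V = \partial^2_{XX}L - \partial_{Xf}L\,(\partial^2_{ff}L)^{-1}\partial_{fX}L,\qquad V(X)\coloneqq \sup_f L(X,f).
\]
Because $\partial^2_{ff}L\preceq 0$, the correction term is positive semidefinite. It therefore suffices to prove the lower bound $\partial^2_{XX}L(X,f^*)[h,h]\ge -\alpha\|h\|_a^2$ at the optimal symmetric potential. This reduction is also valid as a non-smooth statement: $V\ge L(\cdot,f^*(X_0))$ with equality at $X_0$, so a uniform quadratic lower bound on each $L(\cdot,f)$ near its maximizer gives semiconvexity of $V$.

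Next, compute $\partial^2_{XX}L$ explicitly. Set $d_{ij}=x_i-x_j$, and let $\gamma_{ij}=a_ia_j\exp\big((f_i+f_j)/\varepsilon-|d_{ij}|^2/(2\varepsilon)\big)$ be the optimal symmetric plan. Differentiating the Gaussian term twice gives
\[
\partial^2_{XX}L[h,h]=\frac12\sum_{ij}\gamma_{ij}\Big(|h_i-h_j|^2-\frac{\langle d_{ij},h_i-h_j\rangle^2}{\varepsilon}\Big)\ \ge\ -\frac{1}{2\varepsilon}\sum_{ij}\gamma_{ij}\,|d_{ij}|^2\,|h_i-h_j|^2 ,
\]
up to the precise normalisation factor, which I will track carefully. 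The positive part $\sum\gamma_{ij}|h_i-h_j|^2$ can be kept to partially absorb the negative part; this is presumably where the optimal constant comes from.

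The main obstacle is bounding $\gamma_{ij}|d_{ij}|^2/\varepsilon$ by a multiple of $a_ia_j$, uniformly in $\varepsilon$ and in the configuration. The plan is to use two facts about the symmetric Sinkhorn fixed point.
\begin{enumerate}
\item The marginal constraint $\sum_j\gamma_{ij}=a_i$ gives $\gamma_{ii}=a_i^2e^{2f_i/\varepsilon}\le a_i$, hence $e^{f_i/\varepsilon}\le a_i^{-1/2}$.
\item The fixed-point equation $e^{-f_i/\varepsilon}=\sum_j a_je^{f_j/\varepsilon}e^{-|d_{ij}|^2/(2\varepsilon)}$ yields the reverse-type control needed to replace $e^{(f_i+f_j)/\varepsilon}$ by a bounded quantity.
\end{enumerate}
Combining these, each product $\gamma_{ij}\,s_{ij}$ with $s_{ij}=|d_{ij}|^2/(2\varepsilon)$ is bounded by a weight times $\sup_{s\ge0}s^{p}e^{-qs}$. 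With $|h_i-h_j|^2\le 2(|h_i|^2+|h_j|^2)$ and $\sum_j\gamma_{ij}=a_i$, the negative part then becomes $\ge -\alpha\|h\|_a^2$.

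I expect that the threshold $4e^{-3/2}$ arises as such a supremum, for instance $\sup_s s\,e^{-s}$ evaluated after an exponent shift produced by the potential bound, and possibly with $s^{3/2}$ or a $\sqrt{\gamma_{ii}\gamma_{jj}}$ Cauchy–Schwarz step. I would first try the Cauchy–Schwarz route $\gamma_{ij}\le\sqrt{\gamma_{ii}\gamma_{jj}}\,e^{-s_{ij}}$, which holds by positive definiteness of the Gaussian kernel, optimize over $s$, and then adjust the splitting until the constant matches. Finally, I would add $\alpha\|X\|_a^2/2$, whose Hessian is $\alpha\|h\|_a^2$, to conclude convexity.
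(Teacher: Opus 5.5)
You correctly identify the missing step as the main obstacle, but it cannot be filled. Your reduction is sound. At the optimum $\partial^2_{ff}L=-\tfrac{2}{\varepsilon}(\mathrm{diag}(a)+\gamma)$ is negative definite, so $D^2_XV\succeq\partial^2_{XX}L(X,f^*)$. Writing $m_i\coloneqq a_ie^{f_i/\varepsilon}$, one has $\gamma_{ij}=m_im_je^{-s_{ij}}$, so your Cauchy--Schwarz inequality is an identity and adds nothing. One also has $\partial^2_{XX}L[h,h]\ge\sum_{ij}\gamma_{ij}(1-2s_{ij})|h_i-h_j|^2$.

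Since $\min_{s\ge0}e^{-s}(1-2s)=-2e^{-3/2}$, the constant $4e^{-3/2}$ would follow from $m_i\le a_i$, because then $\sum_{ij}m_im_j|h_i-h_j|^2\le 2\bigl(\sum_jm_j\bigr)\sum_im_i|h_i|^2\le 2\|h\|_a^2$. This is how the paper closes the argument. It does not use the envelope theorem; it writes the self-transport term as a supremum over $m_i\in[0,a_i]$ of functions that are $(-4e^{-3/2})$-convex for $\|\cdot\|_a$, after asserting that the optimal symmetric potential satisfies $\varphi_i\le 0$.

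That assertion, however, comes from a Sinkhorn fixed point written without the weights. With the weights of \eqref{eq:dualoteps} it reads $e^{-\varphi_i/\varepsilon}=\sum_j a_je^{(\varphi_j-|x_i-x_j|^2/2)/\varepsilon}$. Its $j=i$ term gives only your bound $m_i\le\sqrt{a_i}$, which is attained for well-separated particles. For two particles of weight $1/2$ at distance $p$, for instance, $e^{2\varphi_i/\varepsilon}=2/(1+e^{-p^2/(2\varepsilon)})>1$. With only $m_i\le\sqrt{a_i}$, the negative part is controlled by $\sum_{ij}\sqrt{a_ia_j}|h_i-h_j|^2$, which is not $\le C\|h\|_a^2$ uniformly (you lose a factor of order $N$ when $a_i=1/N$). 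Your item 2 never becomes an inequality that repairs this.

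In fact no splitting can repair it, because the inequality you are aiming at is false. Take $d=1$, two particles with weights $(\eta,1-\eta)$, $p=x_2-x_1>0$, and the normalization of \eqref{eq:dualoteps} (cost $|x-y|^2/2$). The optimal plan has off-diagonal entry $r$ with $r^2=(\eta-r)(1-\eta-r)e^{-p^2/\varepsilon}$. The self-transport term $V(p)$ satisfies $V'(p)=2rp$, and implicit differentiation gives
\[
V''(p)=2r-\frac{4p^2/\varepsilon}{2/r+1/(\eta-r)+1/(1-\eta-r)}\,.
\]

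For $\eta=1/2$ this equals $\sigma-2u\sigma(1-\sigma)$ with $u=p^2/(2\varepsilon)$ and $\sigma=(1+e^u)^{-1}$, which is about $-0.30$ at $u=2$. Along $h=(-1,1)$, where $\|h\|_a^2=1$, the function in the lemma then has second derivative $\alpha+4V''\approx\alpha-1.20<0$ for $\alpha=4e^{-3/2}\approx 0.89$.

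For small $\eta$, at the point where $r=\eta/2$, one finds $V''\approx\eta\bigl(1-\tfrac23\log\tfrac{2}{\eta}\bigr)$, while moving only the light particle gives $\|h\|_a^2=\eta$. Convexity therefore requires $\alpha\gtrsim\tfrac23\log\tfrac{2}{\eta}$; already $\alpha\ge 2.5$ is needed for $\eta=0.01$. This also rules out any rescue by the factor-$2$ discrepancy between \eqref{eq:w2eps} and \eqref{eq:dualoteps}. With uniform weights the same happens by placing $N-1$ particles at one point, which gives $\eta=1/N$.

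So the modulus must depend on the weights, and the statement as written cannot be proved by any route. What your bound $m_i\le\sqrt{a_i}$ does yield is obtained by splitting $\sum_j\gamma_{ij}s_{ij}$ according to $s_{ij}\le S$ or $s_{ij}>S$ with $S\sim\log(N/\min_i a_i)$. This gives convexity for $\alpha$ of order $\log(N/\min_i a_i)$, consistent with the lower bound above.
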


\begin{proof}
By symmetry the optimal potentials in the dual problem defining $\mr{OT}_\varepsilon$ coincide, i.e., $\varphi=\psi$. By the optimality conditions of the dual problem, the optimal $\varphi$ solves 
\[
{\varphi_i} = -\varepsilon \log\sum_j \exp\left( -\frac{|x_i-y_j|^2 -2\varphi_j}{2\varepsilon} \right)\,.
\]
By standard properties of the $\mr{LogSumExp}$ function, we obtain that 
\[
{\varphi_i} \leq \inf_j \left\{\frac{|x_i-x_j|^2 }{2}- \varphi_j \right\} \leq -{\varphi_i}\,,
\]
and hence $\varphi_i\leq 0$. Since $\varphi$ is non-positive, we can write $\mr{OT}_\varepsilon(\mu,\mu)/2$ as a supremum of functions of the form
\[
- \varepsilon \sum_{ij} \exp\left( - \frac{|x_i -x_j|^2}{2\varepsilon} \right) m_im_j  
\]
with $m_i \coloneqq \exp(\varphi_i/\varepsilon)a_i \in[0,a_i]$. We conclude by observing that the function $z\mapsto -\varepsilon \exp(-|z|^2/(2\varepsilon))$ is ($-2e^{-3/2}$)-convex.
\end{proof}

\begin{remark}
     Note that this implies semi-concavity (uniform in $\varepsilon$) of the Sinkhorn divergence in terms of the particle positions. A similar result was proven in \cite{carlier2024displacement} for displacement convexity, but requires a compact domain. 
\end{remark}

\subsection{Computation of $K$-to-1 signed barycenters}\label{sec:Kto1numerical} Let us consider the $K$-to-1 extrapolation problem \eqref{eq:Kto1} with data 
\[
\nu_0^k = \sum_{i=1}^{N^k_i} a_i^k \delta_{x_i^k} \,, \quad \nu_1 = \sum_{j=1}^M b_j \delta_{y_j}\,.
\]
By the second point of Theorem \ref{th:duality} (see also Remark \ref{rem:uniqueness}), the unique solution to the problem will necessarily be in the form 
\[
\mu = \sum_{j=1}^M b_j \delta_{z^j}\,.
\]
Hence we can define a discrete solution by finding $Z \in \mathbb{R}^{dM}$ solving
\begin{equation}\label{eq:Kto1eps}
\inf_Z \left\{ t \frac{\|Z-Y\|^2_b}{2} -  \frac{(t-1)}2 \sum_k \lambda_0^k \,\mr{OT}_\varepsilon \left(\sum_{j=1}^M b_j \delta_{z_j},\nu_0^k\right) \right\}\,.
\end{equation}
By the strong convexity of the objective function in $Z$,  this problem has a unique solution $Z^\varepsilon$ for all $\varepsilon>0$, and moreover
\[
\sum_j b_j \delta_{z_j^\varepsilon} \rightharpoonup \mu\,,
\]
where $\mu$ is the unique solution of \eqref{eq:Kto1}. 

Replacing $\mr{OT}_\varepsilon$ with its dual formulation in \eqref{eq:dualoteps}, we obtain the equivalent problem
\begin{equation}\label{eq:inff}
\inf_{Z, \bs{\varphi},\bs{\psi}} f(Z,\bs{\varphi},\bs{\psi})
\end{equation}
where $\bs{\varphi} = (\varphi^1,\ldots,\varphi^K) \in \mathbb{R}^{N^1} \times \ldots\times \mathbb{R}^{N^K}$, $\bs{\psi} = (\psi^1,\ldots,\psi^K) \in (\mathbb{R}^{M})^K$, and
\begin{multline}
f(Z, \bs{\varphi}, \bs{\psi}) \coloneqq  t \frac{\|Z-Y\|^2_b}{2}\\ -  (t-1) \sum_{ijk} \lambda_0^k\left[ - \varepsilon\exp\left( - \frac{|x^k_i -z_j|^2}{2\varepsilon}  + \frac{\varphi^k_i + \psi^k_j}{\varepsilon} \right) + \varphi_i^k  + \psi_j^k \right]a_i^k b_j \,.
\end{multline}

\begin{remark}[Convexity of the discrete problem] \label{rem:convexity} Note that problem \eqref{eq:inff} is a (jointly) convex optimization problem; see Proposition 6.1 in \cite{gallouet2025metric}. In fact, just as for the metric extrapolation problem, \eqref{eq:inff} can also be obtained directly as the dual of the entropy-regularized version of the corresponding barycentric problem \eqref{eq:wotmm}.
\end{remark}

\subsubsection{SISTA}\label{sec:sista} Problem \eqref{eq:inff} can be solved using the following iterative procedure: Given $\tau>0$, $\bs{\psi}^0$, $Z^0$, for $n\geq 0$
\[\begin{array}{l}
\bs{\varphi}^{n+1} = \displaystyle \mr{arg} \min_{\bs{\varphi}} f(Z^n,\bs{\varphi},\bs{\psi}^n)\,,\\
\bs{\psi}^{n+1} =\displaystyle \mr{arg} \min_{\bs{\psi}} f(Z^n,\bs{\varphi}^{n+1},\bs{\psi})\,,\\
Z^{n+1} = Z^n - \tau \nabla_Z f(Z^n,\bs{\varphi}^{n+1},\bs{\psi}^{n+1})\,.
\end{array}
\]
The updates of each component of $\bs{\varphi}$ and $\bs{\psi}$ are independent and correspond to one iteration of Sinkhorn's algorithm. They are followed by an explicit Euler step for the minimization over $Z$ with time step $\tau$ (exact minimization would be impractical here since the minimizer does not have an explicit form). The convergence of the algorithm was studied in \cite{gallouet2025metric} in the case $K=1$, in which case the algorithm converges with linear rate as long as $\tau \leq C \varepsilon$, where $C$ is an explicit constant independent of $\varepsilon$. Although we do not do it here for the sake of brevity, it is not difficult to generalize this analysis to the case $K>1$.

\subsubsection{Debiased $K$-to-1 extrapolation}\label{sec:debkto1}
We can define the debiased $K$-to-1 extrapolation in an analogous way, by using $\mr{S}_\varepsilon$ instead of $\mr{OT}_\varepsilon$. This yields the problem
\begin{equation}\label{eq:Kto1debiased}
\inf_Z \left\{ t \frac{\|Z-Y\|^2_b}{2} -  \frac{(t-1)}2 \sum_k \lambda_0^k \,\mr{S}_\varepsilon \left(\sum_{j=1}^M b_j \delta_{z_j},\nu_0^k\right) \right\}\,.
\end{equation}
By Lemma \ref{lem:convdebias}, this problem is convex as long as
\begin{equation}\label{eq:tstar}
1 - (t-1)\alpha > 0\quad \implies t < t^* \coloneqq \frac{1+\alpha}{\alpha}\,,
\end{equation}
with $\alpha \geq 4 e^{-3/2}$ as in Lemma \ref{lem:convdebias}. Problem \eqref{eq:Kto1debiased} can be solved numerically using the same strategy described above as long as $t< t^*$. While debiasing reduces the convexity of the problem, it improves the accuracy of the solution. This is demonstrated by the example below.

\begin{example} The metric extrapolation from $\mu$ to itself is always equal to $\mu$. Howeover, one can easily check that this is no more the case in the entropy regularized setting. We show here that at least in a very simple setting, using $\mr{S}_\varepsilon$ instead of $\mr{OT}_\varepsilon$ solves this issue. To show this, let us first compute $\mr{OT}_\varepsilon(\mu,\mu)$ in the case where $\mu = (\delta_{-p/2} + \delta_{p/2})/2$. By symmetry,  the density of the optimal plan with respect to $\mu\otimes \mu$ is given by
\[
\rho(x,y) = \frac{4}{Z} \exp \left(-\frac{|x-y|^2}{2\varepsilon}\right)
\]
where $Z = 2 \exp(-|p|^2/(2\varepsilon)) + 2$.
Then
\[
\mr{OT}_{\varepsilon}(\mu,\mu) = -\varepsilon \log(Z) - \varepsilon
\]
which is semi-convex in $p$ uniformly in $\varepsilon$.
Consider now the extrapolation problem form $\mu$ to $\mu$, regularized using the Sinkhorn divergence as in \eqref{eq:Kto1debiased}. By symmetry, the solution has the form $\mu =  (\delta_{-q/2} + \delta_{q/2})/2$, where $q$ minimizes
\[
\frac{|q-p|^2}{8(t-1)} + \frac{\varepsilon}{t}\log\left( \exp\left(-\frac{|q-p|^2}{8\varepsilon}\right) +  \exp\left(-\frac{|q+p|^2}{8\varepsilon}\right)\right) - \frac{\varepsilon}{2t} \log\left( \exp\left(-\frac{|q|^2}{2\varepsilon}\right) + 1\right)\,. 
\]
By the first order optimality conditions, one can verify that $q =p$ is a critical point for all $\varepsilon>0$ (it is the unique minimizer as long as $t$ is sufficiently small so that the problem stays convex, but it corresponds to a local maximum for $t$ large).
\end{example}

\subsubsection{A numerical example} In Figure \ref{fig:Kto1} we provide a numerical example for the solution of $K$-to-1 signed barycenters, obtained using the algorithm discussed in Section \ref{sec:sista}, without debiasing and for $K=2$. 
The measures $\nu_0^1$, $\nu_0^2$ and $\nu_1$ are obtained via uniform quantization of given shapes. The measure $\overline{\nu}_0$ corresponding to different values of $t$ and the parameters $\lambda_0^1$ and $\lambda_0^2$ is shown in Figure \ref{fig:linbary}. As discussed in Section \ref{sec:linearized}, as $t\downarrow 1$, $\overline{\nu}_0$ approaches a linearized barycenter of $\nu_0^1$ and $\nu_0^2$, with base $\nu_1$. Note that in this section we report only the results obtained with the biased version of the algorithm, which computes solutions to \eqref{eq:Kto1eps}, as debiasing has only a minor qualitative impact on these examples. The debiased approach of Section~\ref{sec:debkto1} will be needed in the application considered in the next section to reduce the numerical artifacts associated with  entropic regularization.

\begin{figure}[t]
    \centering
    \includegraphics[trim={0.5cm 0.5cm 0cm 0cm},clip,width=0.7\textwidth]{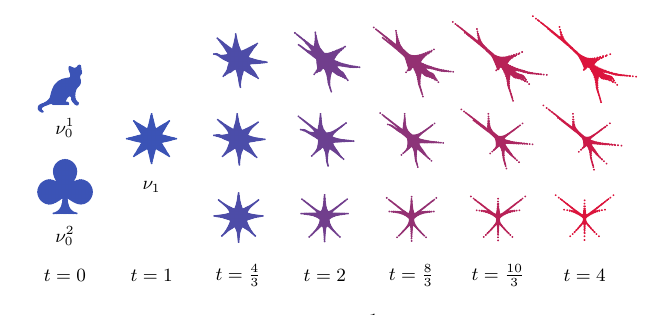}
	\caption{Example of $K$-to-$1$ signed barycenters obtained using the algorithm in Section \ref{sec:sista} with $\varepsilon=10^{-3}$. Top line: $\lambda_0^1=1, \lambda_0^2=0$; middle line:  $\lambda_0^1=\lambda_0^2=\frac{1}{2}$; bottom line: $\lambda_0^1=0, \lambda_0^2=1$.}
	\label{fig:Kto1}
\end{figure}

\begin{figure}[t]
    \includegraphics[trim={0cm 0cm 0cm 0cm},clip,width=0.8\textwidth]{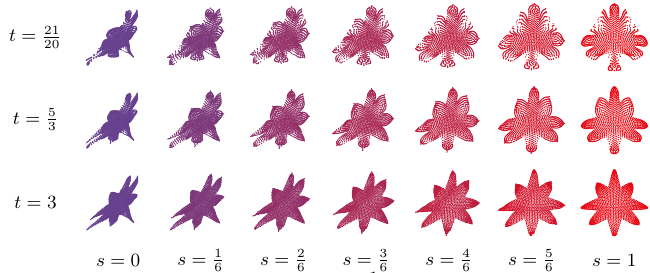}
	\caption{The measure $\overline{\nu}_0$ for $\nu_0^1,\nu_0^2,\nu_1$ as in Figure \ref{fig:Kto1}, for $\lambda_0^1=s$, $\lambda_0^2=(1-s)$ and different values of $t$. For $t\approx 1$, the curve $(\overline{\nu}_0)_{s\in[0,1]}$ approximates the linearized geodesic from $\nu_0^1$ to $\nu_0^2$ with base measure $\nu_1$.}\label{fig:linbary}
\end{figure}

\subsection{Toland duality for quantization and classical barycenters} \label{sec:classical}
The objective of this paragraph is to show how the metric extrapolation problem arises naturally also in the context of the computation of particle approximations of classical Wasserstein barycenters, i.e., when all the coefficients are positive.  
Specifically, we seek a discrete measure supported on $Y \in \mathbb{R}^{dM}$ with given masses $b \in \mathbb{R}^{M}_{>0}$ with $\sum_j b_j = 1$, and approximating the barycenter of $K$ given measures $\nu^1,\ldots,\nu^K$, i.e.,
\begin{equation}\label{eq:barycenter}
 \inf_{Y} \, \sum_{k=1}^K \lambda^k W^2_2\left( \sum_{j=1}^M b_j \delta_{y_j}, \nu^k \right) \,,
\end{equation}
where $\lambda^1,\ldots,\lambda^K>0$ and $\sum_k \lambda^k =1$. Note that when $K=1$, this is a variant of the optimal quantization problem \cite{merigot2021non}.

The connection between this problem and the methods discussed in this work relies on the following proposition which is another application of Toland duality: 

\begin{proposition} \label{prop:barycenterco} $Y$ solves \eqref{eq:barycenter} if and only if it solves
\begin{equation}\label{eq:barycenterco}
\inf \left\{ - \frac{\|Y\|^2_b}{2}~;\quad Y = \sum_k \lambda^k X^k\,, \quad \sum_j b_j \delta_{x_j^k} \preceq_C \nu^k\right\} \,.
\end{equation}
\end{proposition}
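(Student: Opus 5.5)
Expanding the squares, $W_2^2(\mu_Y,\nu^k) = \|Y\|_b^2 + M_2(\nu^k) - 2\,\mr{MCov}(\mu_Y,\nu^k)$, where $\mu_Y \coloneqq \sum_j b_j\delta_{y_j}$. Since $\sum_k\lambda^k=1$, problem \eqref{eq:barycenter} is equivalent, up to the additive constant $\sum_k\lambda^k M_2(\nu^k)$ and a factor $2$, to
\[
\inf_Y \Big\{ \frac{\|Y\|_b^2}{2} - \sum_k \lambda^k \mr{MCov}(\mu_Y,\nu^k)\Big\}.
\]
This is a difference of convex functions on the finite-dimensional space $\mathbb{R}^{dM}$ with inner product $\langle X,Y\rangle_b = \sum_j b_j\langle x_j,y_j\rangle$. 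The plan is to apply Toland duality (Section \ref{sec:toland}) in the form \eqref{eq:infinf}. We take $f(Y)=\|Y\|_b^2/2$ and $g(Y)=\sum_k\lambda^k G_k(Y)$ with $G_k(Y)\coloneqq \mr{MCov}(\mu_Y,\nu^k)$, and compute $g^*$ explicitly.

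The key step is the discrete analogue of Lemma \ref{lem:mcov}:
\[
G_k(Y) = \sup\Big\{ \langle Y, X\rangle_b ~;~ X\in\mathbb{R}^{dM},\ \textstyle\sum_j b_j\delta_{x_j}\preceq_C \nu^k\Big\}.
\]
For "$\geq$", note that by Lemma \ref{lem:mcov} $\mr{MCov}(\mu_Y,\nu^k)\ge \sup\{\int\langle x,y\rangle \ed\gamma;\ \gamma\in\Gamma(\mu_Y,\bar\nu),\ \bar\nu\preceq_C\nu^k\}$, and the coupling $\sum_j b_j\delta_{(y_j,x_j)}$ is admissible. For "$\leq$", take an optimal $\gamma\in\Gamma(\mu_Y,\nu^k)$ and set $x_j\coloneqq\mr{bary}(\gamma_{y_j})$, the conditional barycenter (when some $y_j$ coincide, disintegrate with respect to the index $j$ instead, i.e.\ work with the plan on indices). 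Then $\langle Y,X\rangle_b=\int\langle y,z\rangle\ed\gamma$, and $\sum_jb_j\delta_{x_j}\preceq_C\nu^k$ by Jensen's inequality (the easy direction of Lemma \ref{lem:strassen}). Consequently $G_k$ is the support function of the convex compact set $C_k\coloneqq\{X;\ \sum_jb_j\delta_{x_j}\preceq_C\nu^k\}$, which is convex by the same argument as in Section \ref{sec:kto1} and compact since the second moments are bounded. Hence $g$ is the support function of the Minkowski sum $\sum_k\lambda^kC_k$, and $g^*$ is the indicator of this (closed) set.

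Toland duality then gives
\[
\inf_Y\{f-g\}=\inf_{Z\in\sum_k\lambda^kC_k}\{-f^*(Z)\}=\inf\Big\{-\frac{\|Z\|_b^2}{2};\ Z=\sum_k\lambda^kX^k,\ X^k\in C_k\Big\},
\]
since $f^*=f$ for the weighted norm. This is exactly \eqref{eq:barycenterco}.

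It remains to match the minimizers, which I expect to be the main subtle point, since equality of values alone does not suffice. Write the joint problem \eqref{eq:infinf} as
\[
\inf_{Y}\ \inf_{Z\in\sum_k\lambda^k C_k} \Big\{ \frac{\|Y\|_b^2}{2} - \langle Y,Z\rangle_b \Big\} = \inf_Y\ \inf_Z\ \Big\{ \frac12\|Y-Z\|_b^2-\frac12\|Z\|_b^2 \Big\}.
\]
If $Y$ solves \eqref{eq:barycenter}, choose $Z$ attaining $g(Y)$ (possible by compactness). Then $(Y,Z)$ is a joint minimizer, so minimizing in $Y$ for this fixed $Z$ forces $Y=Z$, and $Z$ solves \eqref{eq:barycenterco}; thus $Y$ is feasible and optimal there. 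Conversely, if $Y$ solves \eqref{eq:barycenterco}, then $Y\in\sum_k\lambda^kC_k$, so $g(Y)\ge\|Y\|_b^2$ and $f(Y)-g(Y)\le-\|Y\|_b^2/2$, which equals the common optimal value. Hence $Y$ solves \eqref{eq:barycenter}.
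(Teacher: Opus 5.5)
Your proof is correct and follows the paper's argument: you rewrite each $\mathrm{MCov}$ term via Lemma \ref{lem:mcov} as a supremum over discrete $X^k$ dominated in convex order, turn the problem into a joint infimum, and eliminate $Y$; your Toland/support-function phrasing is the same computation as the paper's minimization in $Y$. Two steps the paper leaves implicit are made explicit in your version: the conditional-barycenter justification that the competitors $X^k$ can be taken discrete, and the check that the minimizers of the two problems coincide, not only their optimal values.
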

\begin{proof} From Lemma \ref{lem:mcov} we can deduce that
\[
F_{\nu^k}(Y) = \sup_{X^k} \left\{ \langle  X^k,Y\rangle_b\,;\, \sum_j b_j \delta_{x_j^k} \preceq_C \nu^k \right\}\,.
\]
Note in particular that we can take $X^k$ discrete (rather than a general random variable) since $Y$ is discrete. Then, up to constant terms, we can write \eqref{eq:barycenter} as follows:
\begin{multline}\label{eq:decomposition}
\inf_Y \left\{ \frac{\|Y\|^2_b}{2} - \sum_k \lambda^k F_{\nu^k}(Y) \right\} \\= \inf_Y \inf_{X^k} \left\{ \frac{\|Y\|^2_b}{2} - \sum_k \lambda^k \langle  X^k,Y\rangle_b\,;\, \sum_j b_j \delta_{x_j^k} \preceq_C \nu^k \right\}\,.
\end{multline}
Minimizing with respect to $Y$ gives the result.
\end{proof}

Let us formulate the proximal gradient method for problem \eqref{eq:barycenterco}. Given $Y^0$, for $n\geq 0$, 
\begin{equation}\label{eq:implicitquant}
Y^{n+1} = \mr{arg}\min_{Y} \left\{ \frac{\|Y- Y^n\|^2_b}{2\tau}- \frac{\|Y\|^2_b}{2}~;~ Y = \sum_k \lambda^k X^k\,,~ \sum_j b_j \delta_{x_j^k} \preceq_C \nu^k\right\}\,.
\end{equation}
The scheme is well-defined for $\tau<1$, in which case every step has a unique solution. It is also easy to see that the sequence $(Y^n)_n$ is bounded and any accumulation point $Y^*$ is a stationary point of \eqref{eq:barycenterco}, i.e., $Y^*$ satisfies the constraint and 
\begin{equation}\label{eq:opty}
\langle Y^*, Y\rangle_b \leq  \| Y^*\|^2_b \qquad \forall \,Y= \sum_k \lambda^k X^k \text{ such that } \sum_j b_j \delta_{x_j^k} \preceq_C \nu^k\,,
\end{equation}
(the proof follows the same lines as Proposition \ref{prop:dcconv}; see Appendix \ref{app:conv}).

Comparing this with problem \eqref{eq:Kto1proj}, we have the following interpretation of the scheme: 
\[
Y^{n+1} = Y^{n} + \frac{\tau}{1-\tau} (Y^n - Z^{n+1})\,,     
\]
where $Z^{n+1}$ is uniquely determined by the conditions
\[
\mu = \sum_j b_j \delta_{z_j^{n+1}}~~ \text{ solves \eqref{eq:Kto1} with } t = \tau^{-1}\,, ~\nu_0^k = \nu^k \text{ and } \nu_1 = \sum_{j} b_j \delta_{y^n_j}\,,
\]
and \[ 
\pi = \sum_j b_j \delta_{(y_j^{n},z_j^{n+1})} \in \Gamma^{\mr{opt}}(\nu_1,\mu).
\]
where $\Gamma^{\mr{opt}}(\mu,\nu)$ denotes the set of optimal couplings for the transport from $\mu$ to $\nu$.

Taking $\tau =1$ in \eqref{eq:implicitquant}, we recover existing fixed point strategies for Wasserstein barycenters studied in \cite{alvarez2016fixed, tanguy2024computing} (see Section \ref{sec:linearized}), or Lloyd's algorithm when $K=1$ \cite{merigot2021non}. The regularization induced by taking $\tau<1$ makes the algorithm easier to analyze for arbitrary data and bypasses the constructions of glued couplings as in \cite{tanguy2024computing}, but it does not affect substantially the numerical results; see Figure \ref{fig:bary}.  Note that for this test we use the debiased version of the algorithm descibed in Appendix \ref{sec:debbary}, which yields solutions to \eqref{eq:debbary}, known as debiased Sinkhorn barycenters \cite{janati2020debiased}. Without debiasing, the effects of entropy regularization are well known to induce significant mass concentration artifacts in the computed barycenters. Alternative regularization approaches can also alleviate this issue \cite{chizat2025doubly}, although they are less straightforward to incorporate into our framework.

\begin{figure}[t]
    \centering
    \begin{subfigure}[c]{0.35\textwidth}
        \centering
        \includegraphics[width=.85\textwidth]{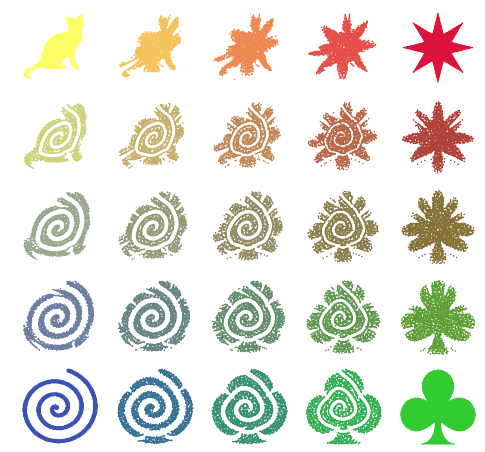}
    \end{subfigure}%
    \begin{subfigure}[c]{.2\textwidth}
        \centering
        \includegraphics[width=\textwidth]{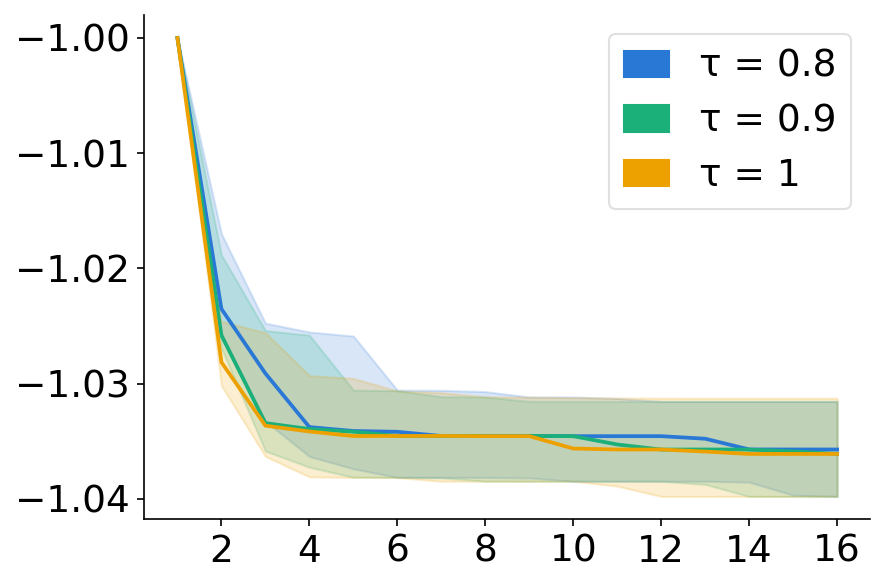}\\[0.3cm]
        \includegraphics[width=\textwidth]{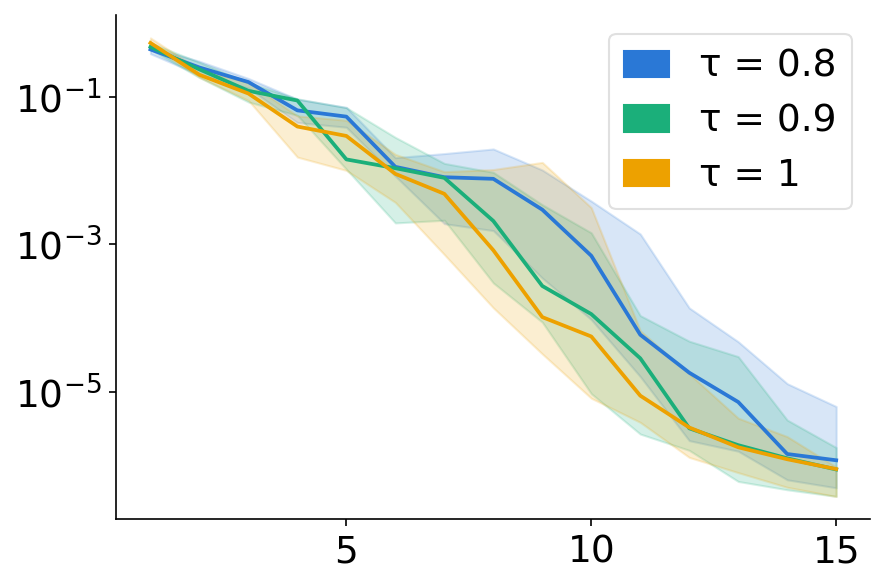}
    \end{subfigure}
    \caption{On the left, barycenters computed with the entropically regularized and debiased version of algorithm \eqref{eq:implicitquant} (see Appendix \ref{sec:debbary}) for $\tau=0.85$ and $\varepsilon=10^{-3}$: the four corner images are interpolated with $(\lambda_i)_{i=1}^4=( (1-s)(1-t), s(1-t), (1-s)t, st )$ for $s,t\in\{0,\frac{1}{4},\frac{1}{2},\frac{3}{4},1\}$. On the right, the convergence results for a different test where $(\nu_0^k)_{k=1}^4$ are drawn from the same normal distribution in dimension $d=2$, and $M=N=30$; the test is repeated $15$ times and the median curve is represented, with the shaded region spanning the 33-66$\%$ quantiles. On the top, comparison in the decrease of the error on the iterate; on the bottom, comparison in the relative decrease of the objective function, $f_i/|f_0|$.}
    \label{fig:bary}
\end{figure}

\subsection{Computation of signed barycenters via a DC method}\label{sec:signeddc}
Let $\nu_0^k, \in \mathcal{P}_2(\mathbb{R}^d)$, $\lambda_0^k>0$ for $k=1,\ldots,K$, and $\nu_1^l \in \mathcal{P}_2(\mathbb{R}^d)$, $\lambda_1^l>0$ for $l=1,\ldots,L$, such that $\sum_l \lambda_1^l =1$, $\sum_l \lambda_1^l =1$ and $t>1$. We seek to compute particle approximations of the signed barycenter problem, defined by the following problem:
\[\inf_{Z} \, \left\{ t \sum_{l=1}^L \frac{\lambda_1^l}{2}  W^2_2\left(\sum_{j=1}^M b_j \delta_{z_j}, \nu_1^l \right) - (t-1)\sum_{k=1}^K \frac{\lambda_0^k}{2} W^2_2\left(\sum_{j=1}^M b_j \delta_{z_j}, \nu_0^k \right)\right\}\,.
\]

Up to constant terms, the problem above can be written as follows:
\begin{equation}\label{eq:dc0}
\inf_{Z} \left\{ \frac{\|Z\|_b^2}{2t} + F_0(Z) - F_1(Z) \right\}
\end{equation}
where
\[
 F_0(Z) \coloneqq \frac{t-1}{t} \sum_{k=1}^K \lambda_0^k \mathrm{MCov}\left( \sum_{j=1}^M b_j \delta_{z_j}, \nu_0^k \right)\,, \quad F_1(Z) \coloneqq \sum_{l=1}^L \lambda_1^l \mathrm{MCov}\left( \sum_{j=1}^M b_j \delta_{z_j}, \nu_1^l \right) \,.
\]

As equation \eqref{eq:dc0}  consists in the minimization of a difference of convex functions, it can be solved using the the following classical alternate optimization approach for DC programming:  Given $Z^0$, for $n\geq 1$,
\begin{equation}\label{eq:dc}
Y^n \in \partial F_1(Z^{n-1}) \,,\quad Z^n \in \partial \tilde{F}_0^*(Y^n) \,, \quad \tilde{F}_0(\cdot) = \frac{\|\cdot\|_b^2}{2} + F_0(\cdot) 
\end{equation}
where $\partial$ denotes the subdifferential with respect to the weighted inner product $\langle X,Y\rangle_b = \sum_i \langle x_i,y_i\rangle b_i$. More explicitly, we have 
\begin{equation} \label{eq:partialF1}
\partial F_1(Z) =  \left\{ \sum_{l=1}^L \lambda_l^+ B^l(Z)\,; ~ B^l(Z)_j =  \int x \mathrm{d} \pi^l_{z_j}(x), ~ \pi^l \in \Gamma^{\mr{opt}}\left( \nu_1^l, \sum_j b_j \delta_{z_j} \right) \right\}
\end{equation}
where $\mathrm{d}\pi^l(y,z) = \sum_j\delta_{z_j} \otimes \mathrm{d} \pi^l_{z_j}(y)$. As for the subdifferential of $\tilde{F}^*$, we first observe that 
\[
\tilde{F}^*_0(Y) = \frac{\|Y\|^2_b}{2} + \sup_Z \left\{-\frac{\|Z -Y\|^2_b}{2} +\frac{t-1}{2t} \sum_{k=1}^K \lambda_0^k W^2_2\left( \sum_j b_j \delta_{z_j} , \nu_0^k\right) \right\}\,.
\]
From this fact and the analysis in Section \ref{sec:kto1} one can deduce the following characterization:

\begin{lemma} $\tilde{F}^*_0 \in C^1$ and $\nabla \tilde{F}^*(Y) = Z^*$ is uniquely determined by the conditions
\[
\mu = \sum_j b_j \delta_{z_j^{*}}~~ \text{ solves \eqref{eq:Kto1} with } \nu_0^k = \nu^k \text{ and } \nu_1 = \sum_{j} b_j \delta_{y_j}\,,
\]
and \[ 
\pi = \sum_j b_j \delta_{(y_j,z_j^*)} \in \Gamma^{\mr{opt}}\left(\nu_1,\mu\right).
\]
\end{lemma}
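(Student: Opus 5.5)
The plan is to identify $\tilde F_0^*$ with the value of a strongly convex minimization problem, and then to recognize that problem as the discrete version of \eqref{eq:Kto1X}. By definition, and since $\mathrm{MCov}$ with the $b$-weighted particle measure is exactly $F_{\nu_0^k}$ evaluated on the step random variable $\omega\mapsto z_j$, we have
\[
\tilde F_0^*(Y) = -\inf_Z\Big\{ \frac{\|Z\|_b^2}{2} - \langle Z,Y\rangle_b + \frac{t-1}{t}\sum_k\lambda_0^k F_{\nu_0^k}(Z)\Big\}.
\]
Each $F_{\nu_0^k}$ is convex and finite on $\mathbb{R}^{dM}$ (supremum of linear functions, Section \ref{sec:mcov}), so the function $\tilde F_0$ is $1$-strongly convex and continuous with respect to $\|\cdot\|_b$. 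The standard fact that the conjugate of a $1$-strongly convex proper l.s.c.\ function is $C^1$ with $1$-Lipschitz gradient then gives $\tilde F_0^*\in C^1$. Moreover, $\nabla\tilde F_0^*(Y)$ is the unique maximizer $Z^*$ in the supremum, since $\partial \tilde F_0^*(Y)=\operatorname{argmax}_Z\{\langle Z,Y\rangle_b-\tilde F_0(Z)\}$ by Fenchel–Young. This settles the first claim.

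To characterize $Z^*$, I will expand the squares as in the displayed formula preceding the lemma. Up to the factor $t$ and additive constants, $Z^*$ is then the unique minimizer over $Z\in\mathbb{R}^{dM}$ of
\[
t\frac{\|Z-Y\|_b^2}{2} - \frac{t-1}{2}\sum_k\lambda_0^k W_2^2\Big(\sum_j b_j\delta_{z_j},\nu_0^k\Big).
\]
This is precisely \eqref{eq:Kto1X}, with the time parameter adjusted as in the displayed formula, restricted to random variables that are constant on the cells of a partition of $\Omega$ into sets of mass $b_j$, with $Y$ the corresponding step variable of law $\nu_1=\sum_j b_j\delta_{y_j}$. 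Since $\|Z-Y\|_b^2\ge W_2^2(\mu,\nu_1)$, the objective on such $Z$ dominates the \eqref{eq:Kto1} functional at $\mu=\sum_j b_j\delta_{z_j}$, and the two coincide iff the index coupling $\sum_j b_j\delta_{(y_j,z_j)}$ is optimal.

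The key step, which I expect to be the main obstacle, is showing that the unconstrained minimizer of \eqref{eq:Kto1X} over all of $L^2$ already lies in this finite-dimensional subspace. I will use Theorem \ref{th:duality}(2): taking $(w_k)_k$ optimal for \eqref{eq:dualBreniermm}, the solution of \eqref{eq:Kto1} is $\mu=\nabla\bar u_\#\nu_1$, with $\bar u$ the smooth convex potential \eqref{eq:optimpot}. The random variable $X=\nabla\bar u(Y)$ is a function of $Y$, hence a step variable with values $z^*_j=\nabla\bar u(y_j)$; this assignment is consistent even when some $y_j$ coincide. Because $\nabla\bar u$ is the gradient of a convex function, $(Y,X)$ is an optimal coupling. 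It follows that $X$ attains the value of \eqref{eq:Kto1}, which by Lemma \ref{lem:Kto1existence} equals the infimum of \eqref{eq:Kto1X}, so $X$ is its unique minimizer. Therefore the restricted minimizer $Z^*$ equals this $X$.

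Consequently $\mu=\sum_j b_j\delta_{z^*_j}$ solves \eqref{eq:Kto1} and $\pi=\sum_j b_j\delta_{(y_j,z_j^*)}$ is optimal. Conversely, any $Z$ satisfying both conditions attains the minimal value, so by strict convexity $Z=Z^*$. This gives the uniqueness in the characterization.
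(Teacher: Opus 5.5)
Your proof is correct in substance and follows the paper's route. The paper only rewrites $\tilde F_0^*$ as the displayed supremum and appeals to Section~\ref{sec:kto1}: by Theorem~\ref{th:duality}(2) and Remark~\ref{rem:uniqueness}, the solution of \eqref{eq:Kto1} is $\nabla\bar u_\#\nu_1$ and hence has the form $\sum_j b_j\delta_{z_j}$. Your step-variable embedding, the identification $X=\nabla\bar u(Y)$, and the strict-convexity argument for uniqueness make this precise.

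There is one normalization slip to fix. Your first display uses $\tilde F_0=\|\cdot\|_b^2/2+F_0$, as literally written in \eqref{eq:dc}. For that function, the expansion into $t\|Z-Y\|_b^2/2-\frac{t-1}{2}\sum_k\lambda_0^kW_2^2$ (up to the factor $t$) is off by an extra term $\frac{t-1}{2t}\|Z\|_b^2$. With that function you would actually get \eqref{eq:Kto1} with $t$ replaced by $(2t-1)/t$ and $\nu_1$ rescaled by the factor $t/(2t-1)$. That is what your phrase ``time parameter adjusted'' glosses over, and it contradicts the statement.

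The displayed formula, and the lemma with the same $t$, correspond instead to $\tilde F_0=\|\cdot\|_b^2/(2t)+F_0$, which is consistent with \eqref{eq:dc0}. This function is $1/t$-strongly convex, so your $C^1$ argument still goes through, now with a $t$-Lipschitz gradient. With this normalization no adjustment of $t$ is needed, and the rest of your argument applies verbatim.
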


Hence,  algorithm \eqref{eq:dc} computes alternatively a linearized barycenter (with positive coefficients) of the measures $\nu_1^l$ with base the current estimate of the solution, and then extrapolates from the measures $\nu_0^k$ to such linearized barycenter to compute a new estimate of the solution. For $L=1$, in particular, we recover the simple $K$-to-1 extrapolation problem. For $L>1$, the algorithm can be viewed as a fixed point method on the optimality conditions given in Proposition \ref{prop:crit}. 

A classical convergence result for algorithm \eqref{eq:dc} guarantees that accumulation points of the iterates are stationary points. We recall this result in the following proposition, whose proof is provided in the appendix for completeness. 

\begin{proposition} \label{prop:dcconv} The sequence $(Z^n)_n$ defined by \eqref{eq:dc} is bounded and any accumulation point is a stationary point in the sense of Proposition \ref{prop:crit}.
\end{proposition}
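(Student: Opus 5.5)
We follow the standard DCA (difference-of-convex algorithm) argument on the objective $E(Z) \coloneqq \tilde{F}_0(Z) - F_1(Z)$. Up to constants, this is the objective of \eqref{eq:dc0}: one has $\tilde{F}_0 = \frac{\|\cdot\|_b^2}{2} + F_0$, and the quadratic term is arranged to match that objective. The plan has three steps: prove monotone descent of $E$ along the iterates, deduce boundedness from coercivity of $E$, and then pass to the limit in the two subdifferential inclusions.

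\textbf{Descent.} Since $Y^n \in \partial F_1(Z^{n-1})$, convexity of $F_1$ gives
\[
F_1(Z) \geq F_1(Z^{n-1}) + \langle Y^n, Z - Z^{n-1}\rangle_b \qquad \text{for all } Z.
\]
Since $Z^n = \nabla \tilde{F}_0^*(Y^n)$, the point $Z^n$ minimizes $\tilde{F}_0(Z) - \langle Y^n, Z\rangle_b$. Moreover, $\tilde{F}_0$ is $1$-strongly convex, because $F_0$ is convex as a positive combination of the convex functions $\mathrm{MCov}(\cdot,\nu_0^k)$. Hence
\[
\tilde{F}_0(Z^{n-1}) - \langle Y^n, Z^{n-1}\rangle_b \geq \tilde{F}_0(Z^n) - \langle Y^n, Z^n\rangle_b + \tfrac12\|Z^n - Z^{n-1}\|_b^2 .
\]
Adding this to the subgradient inequality evaluated at $Z = Z^n$ yields
\[
E(Z^n) + \tfrac12 \|Z^n - Z^{n-1}\|^2_b \leq E(Z^{n-1}).
\]
So $E(Z^n)$ is nonincreasing. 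Provided $E$ is bounded below, this also gives $\sum_n \|Z^n - Z^{n-1}\|_b^2 < \infty$.

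\textbf{Boundedness.} We use the growth bound
\[
\mathrm{MCov}\Bigl(\textstyle\sum_j b_j\delta_{z_j},\nu\Bigr) \leq \|Z\|_b \, M_2(\nu)^{1/2},
\]
which follows from Cauchy--Schwarz. This makes $F_0$ and $F_1$ grow at most linearly in $\|Z\|_b$, so that
\[
E(Z) \geq \frac{\|Z\|_b^2}{2t} - C\bigl(1 + \|Z\|_b\bigr).
\]
Thus $E$ is coercive and bounded below. The sublevel set $\{E \leq E(Z^0)\}$ is then bounded and contains every iterate, so $(Z^n)_n$ is bounded. Since $\partial F_1$ is contained in a bounded set by \eqref{eq:partialF1} (barycenters of conditionals of $\nu_1^l$ have bounded $\|\cdot\|_b$ norm), $(Y^n)_n$ is bounded as well.

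\textbf{Limit and stationarity.} Take a subsequence $Z^{n_j} \to Z^*$. By the summability above, $Z^{n_j - 1} \to Z^*$ too. Passing to a further subsequence we may assume $Y^{n_j} \to Y^*$. The subdifferential graph of a finite convex function on $\mathbb{R}^{dM}$ is closed, so $Y^* \in \partial F_1(Z^*)$. By continuity of $\nabla \tilde{F}_0^*$ from the preceding lemma, $Z^* = \nabla \tilde{F}_0^*(Y^*)$.

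It remains to translate these two inclusions into the statement of Proposition \ref{prop:crit}. By \eqref{eq:partialF1}, $Y^* = \sum_l \lambda_1^l B^l(Z^*)$ for some optimal plans $\pi^l \in \Gamma^{\mathrm{opt}}(\nu_1^l, \mu^*)$, where $\mu^* = \sum_j b_j \delta_{z^*_j}$. By the preceding lemma, $\mu^*$ solves \eqref{eq:Kto1} with $\nu_1 = \sum_j b_j \delta_{y^*_j}$, and $(y^*_j, z^*_j)$ is an optimal coupling. This is precisely the fixed-point condition of Proposition \ref{prop:crit}, stated with the barycentric projections of the plans $\pi^l$ in place of random variables $Y^l$.

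\textbf{Main obstacle.} This last identification is the delicate point. When the plans $\pi^l$ split the mass of a particle $z_j^*$, we must check that Proposition \ref{prop:crit} is interpreted with $Y^l$ replaced by these barycentric projections. Such a replacement is legitimate because only $\langle X, Y^l\rangle$ enters the proof of that proposition, and conditional expectation preserves this pairing for $X$ measurable with respect to the particle index. We therefore read ``stationary'' as: $\mu^*$ solves \eqref{eq:Kto1} with $\nu_1$ equal to the law of $\sum_l \lambda_1^l B^l(Z^*)$.
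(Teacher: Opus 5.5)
Your proof is correct and follows essentially the same route as the paper's: a descent inequality for $\tilde F_0 - F_1$ from the strong convexity of $\tilde F_0$ and the subgradient inequality for $F_1$, then boundedness of $(Z^n)_n$ and $(Y^n)_n$, then a closed-graph passage to the limit in $Y^*\in\partial F_1(Z^*)$ and $Z^*=\nabla\tilde F_0^*(Y^*)$. Along the way you add an explicit coercivity bound and make precise, via barycentric projections, the discrete reading of ``stationary'' that the paper leaves implicit. One cosmetic point: for $\tilde F_0-F_1$ to match the objective in \eqref{eq:dc0} and the lemma characterizing $\nabla\tilde F_0^*$, the quadratic term in $\tilde F_0$ must be $\|\cdot\|_b^2/(2t)$, so the strong convexity modulus is $1/t$ rather than $1$; this does not affect the argument.
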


An illustrative example obtained using this scheme, for the same type of data as in the previous sections, is shown in Figure \ref{fig:KtoL}.

\begin{figure}[t]
    \centering
    \includegraphics[trim={0cm 0cm 0cm 0cm},clip,width=0.7\textwidth]{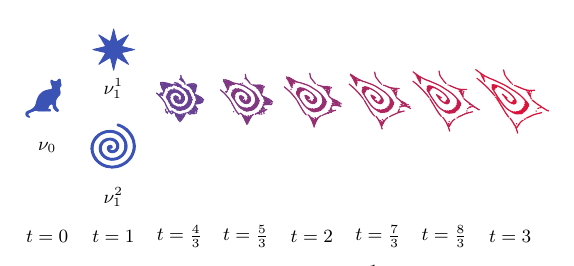}
    \raisebox{1.5cm}{\includegraphics[trim={0cm 0cm 0cm 0cm},clip,width=0.2\textwidth]{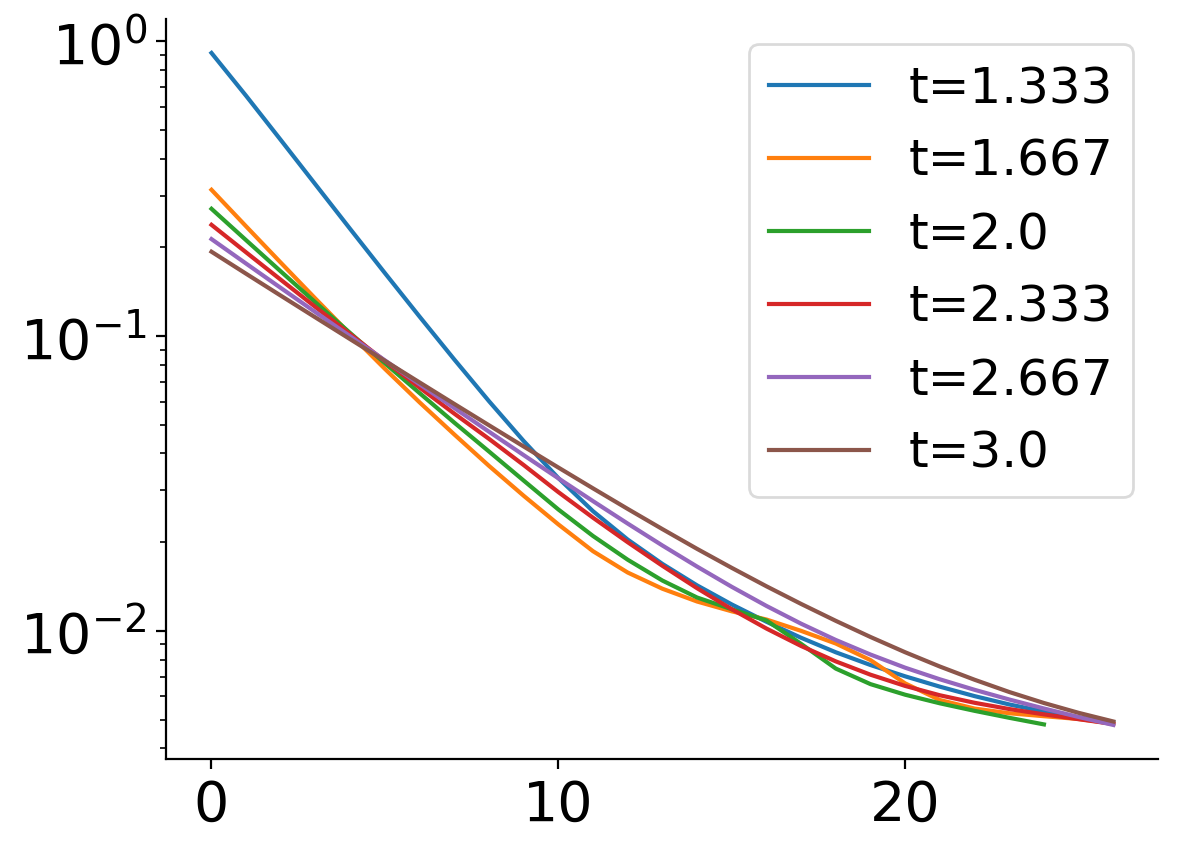}}
	\caption{Example of signed Wasserstein barycenters computed with the entropy-regularized version of algorithm \eqref{eq:dc} ($K=1$, $L=2$, $\lambda_1^1=\lambda_1^2=\frac{1}{2}$) with debiasing (see Appendix \ref{sec:debsignedbary}) and $\varepsilon = 10^{-3}$. On the right, convergence profile for the objective function in \eqref{eq:signed} along the iterations.}
	\label{fig:KtoL}
\end{figure}

\subsection{Global Fréchet regression} Following \cite{petersen2019frechet}, one can use signed barycenters to generalize linear regression to models whose responses take values in metric spaces.  Let $(X,Y)\sim \pi \in \mc{P}(E \times M)$, with $E$ a Euclidean space and $(M,d)$ a metric space. Given i.i.d.\ samples $(X_i,Y_i) \sim \pi$ for $i=1,\ldots,n$,  denote by  $\bar{X} = n^{-1} \sum_{i=1}^n X_i$ and $\hat{\Sigma} = n^{-1}\sum_{i=1}^n(X_i-\bar{X})(X_i -\bar{X})^T$ the sample mean and sample covariance of the predictor, respectively. We define an estimator for the regression function as follows:
\[
\hat{m}(x)=\operatorname{arg}\min_y\frac1n\sum_{i=1}^n\hat{s}(x,X_i)d^2(y,Y_i),
\qquad
\hat{s}(x,z)=1+(x-\bar{X})^T\hat{\Sigma}^{-1}(z-\bar{X}).
\]
provided that the minimizer exists and is unique. When $M$ is a Euclidean space, this reduces to the standard linear regression estimator.  

We consider here the setting where $E=\mathbb{R}$ and $M$ is $\mc{P}_2(\mathbb{R}^d)$ equipped with the $W_2$ distance. A numerical example for this setting is shown in Figure \ref{fig:regression}. Here, the data obtained to compute $\hat{m}(x)$ is obtained by sampling uniformly in time a given time evolution of a discrete measure.  For a given time value $x$, $\hat{m}(x)$ is a signed barycenter, which we computed using the entropy-regularized version of algorithm   \eqref{eq:dc} with debiasing (see Appendix \ref{sec:debsignedbary}) and $\varepsilon = 10^{-3}$. Note that, as one might expect, the predicted solution is qualitatively closer to a Wasserstein geodesic than the true evolution.

We remark that in this general case the lack of uniqueness is an obstacle to apply the theory developed in \cite{petersen2019frechet}. (Sufficient requirements in this direction for the Gaussian case have been recently provided in \cite{nguyen2026fr}). Moreover, the numerical approach we use is only guaranteed to converge to a stationary point; see Proposition \ref{prop:dcconv}. Despite these limitations, the numerical experiment illustrates that the proposed method can still recover an evolution with the expected geometric features. 

Note that other models have been proposed to generalize linear regression to the Wasserstein space. These deal with the issue of finite time existence of geodesics without relying on signed barycenters, e.g., by linearization of the Wasserstein space itself \cite{wang2013linear,seguy2015principal} or by explicit parameterization of the geodesics \cite{vesseron2026wasserstein}. 

\begin{figure}[t]
	\setlength{\tabcolsep}{0pt}
	\scriptsize
	\centering
	\begin{tabular}{C{0.05\textwidth}C{0.22\textwidth}C{0.22\textwidth}C{0.22\textwidth}C{0.24\textwidth}C{0.05\textwidth}}
		& \includegraphics[trim={.5cm .5cm .5cm .5cm},clip,width=0.22\textwidth]{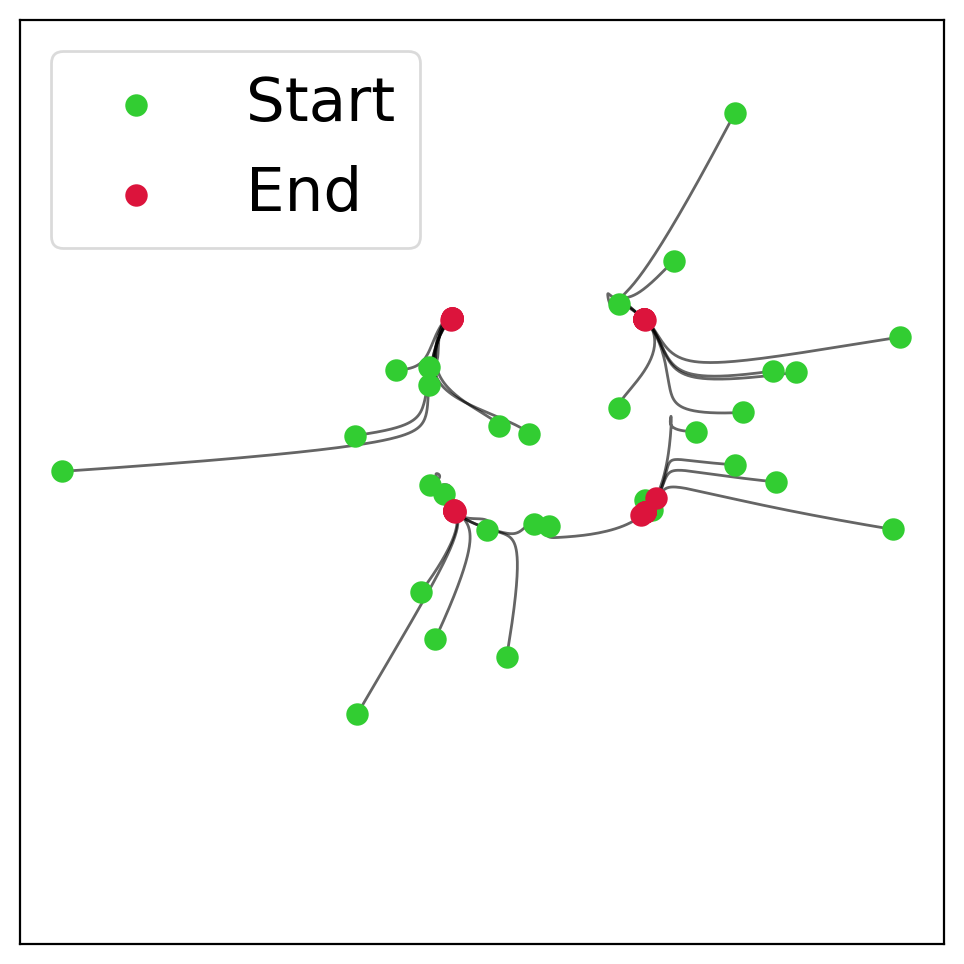} &
		\includegraphics[trim={.5cm .5cm .5cm .5cm},clip,width=0.22\textwidth]{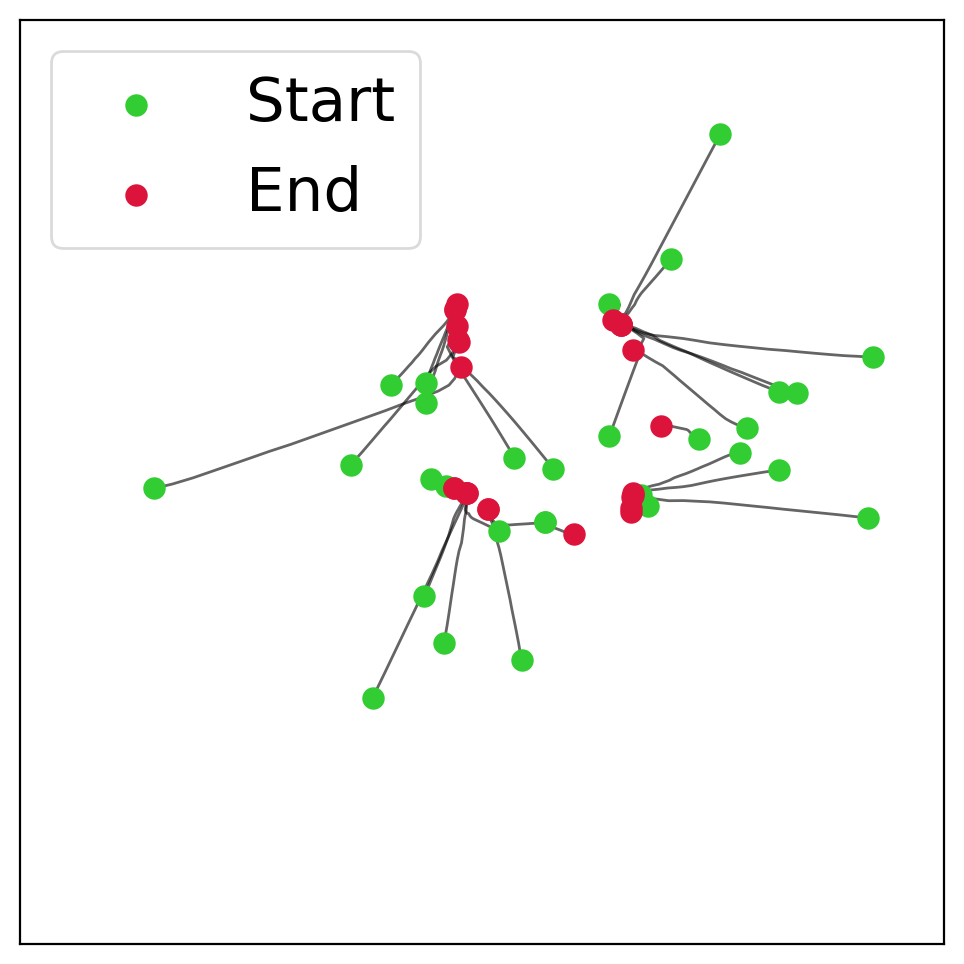} & \includegraphics[trim={.5cm .5cm .5cm .5cm},clip,width=0.22\textwidth]{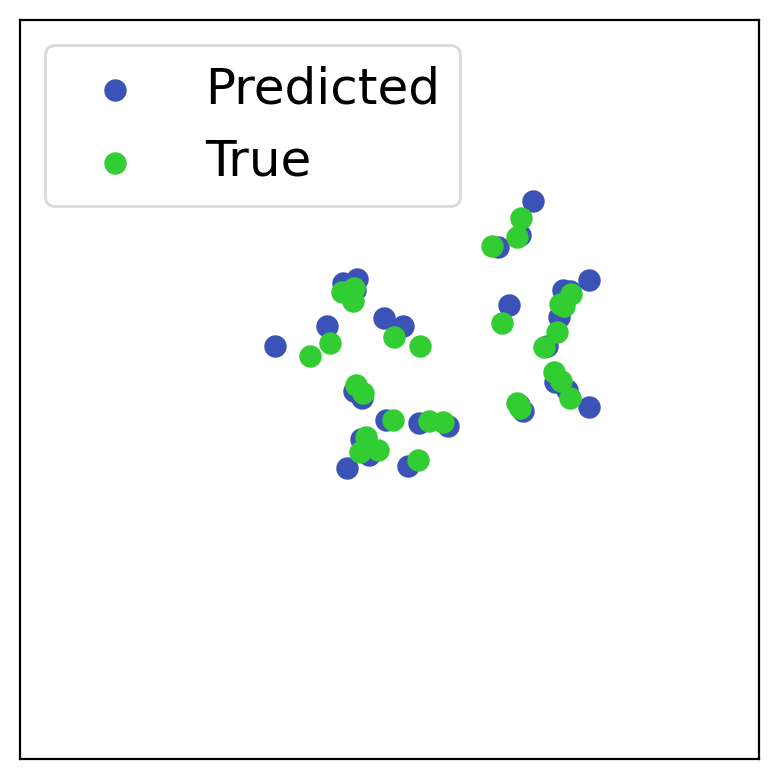} &
	    \raisebox{.8cm}{\includegraphics[trim={0cm 0cm 0cm 0cm},clip,width=0.2\textwidth]{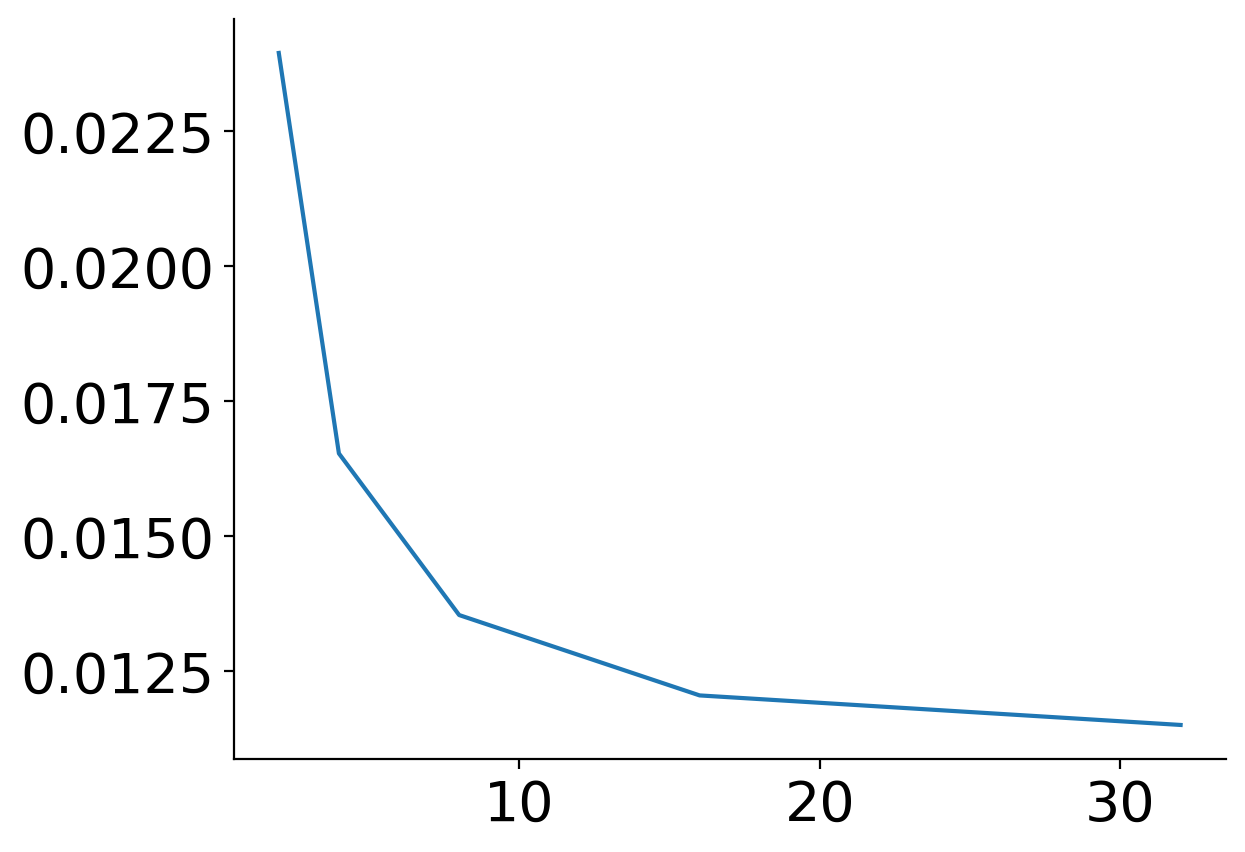}}
	    &
	\end{tabular}\vspace{-2em}
	\caption{Example of regression for a Wasserstein gradient flow of the energy $\mathcal{E}(\rho)=\iint W(x-y)\mathrm{d}\rho(x)\mathrm{d}\rho(y)+\int V(x) \mathrm{d}\rho(x)$, for $W(x)=\frac{1}{4}(|x|^2-1)^2$ and $V(x)=x_1^4+x_2^4+x_1^2x_2^2$. From left to right: true trajectories obtained from a discrete initial condition; reconstructed trajectories using $16$ equally spaced in time snapshots of the flow; true and predicted solutions at half time, using again $16$ snapshots; distance between true and predicted solution with increasing number of snapshots, measured with the Sinkhorn divergence $\mr{S}_\varepsilon$ at the initial, half and final time.} \label{fig:regression}
\end{figure}

\section*{Acknowledgements} 
 AN acknowledges funding by the Agence Nationale de la Recherche (ANR), project ANR-25-CE40-3242-01.  GT was supported by the European Union via the ERC AdG 101054420 EYAWKAJKOS project.

\appendix

\section{Debiased algorithms}

\subsection{Debiased quantization and classical barycenters}\label{sec:debbary}
We seek to compute particle approximation of the debiased barycenter problem:
\begin{equation}\label{eq:debbary}
\inf_{Y} \, \frac{\beta}{2} \sum_k \lambda^k {\mr{S}}_{\varepsilon}\left( \sum_{j} b_j \delta_{y_j}, \nu^k \right) = \inf_Y \left\{ \frac{\|Y\|^2_b}{2} - F(Y) \right\} \end{equation}
where 
\begin{equation}\label{eq:Fbeta}F(Y) = \frac{\|Y\|^2_b}{2}-\frac{\beta}{2} \sum_k \lambda^k {\mr{S}}_{\varepsilon}\left( \sum_{j} b_j \delta_{y_j}, \nu^k \right) \,.
\end{equation}
Here $\mr{S}_\varepsilon$ is the Sinkhorn divergence and $\beta$ is chosen so that $F(Y)$ is convex. By Toland's duality, this is equivalent to 

$$
\inf_Y \left\{ F^*(Y) - \frac{\|Y\|^2_b}{2}\right\}\,.
$$

The implicit Euler scheme with time step $\tilde{\tau} <1$ can be formulated as follows: given $Y^n$, $Y^{n+1}$ solves
\[
\inf_{Y} \left\{ \frac{\|Y - Y^n\|^2_b}{2\tilde{\tau}} - \frac{\|Y\|^2_b}{2}  + F^*(Y) \right\}\,.
\]
By convex duality, this is equivalent (up to a constant) to
\[
 \sup_Z \left\{ -\frac{\|Z - Y^n\|^2_b}{2(1-\tilde{\tau})} + \frac{\beta}{2} \sum_k \lambda^k \mr{S}_{\varepsilon}\left( \sum_{j} b_j \delta_{z_j}, \nu^k \right)  \right\}\,.
\]
Since $F$ defined in \eqref{eq:Fbeta} is convex, this is a strongly concave maximization problem for all $\tilde{\tau}<1$. Moreover, note that this problem is again a $K$-to-1 extrapolation with $t= (1-(1-\tilde{\tau})\beta)^{-1}$, but debiased. The algorithm has exactly the same structure as in the case discussed in Section \ref{sec:classical}.
In particular, the solutions $Y^{n+1}$ and $Z^{n+1}$ of the two problems are still related by
\[
Y^{n+1} = Y^{n} + \frac{\tilde{\tau}}{1-\tilde{\tau}} (Y^n - Z^{n+1})\,.
\]

\subsection{Debiased signed barycenters}\label{sec:debsignedbary}
We consider now the regularized signed barycenter problem where all Wasserstein distances are replaced by Sinkhorn: divergences
$$
\inf_{\mu} \, \left\{ t \sum_{l=1}^L \lambda_1^l \frac{ \mr{S}_\varepsilon\left(\mu, \nu_1^l \right)}{2} - (t-1)\sum_{k=1}^K\lambda_0^k \frac{\mr{S}_\varepsilon \left( \mu, \nu_0^k \right)}{2}\right\}
$$
Note that this can be seen as a direct generalization of debiased Sinkhorn barycenters \cite{janati2020debiased} to the signed setting.

As before we look for $\mu$ in the form $\mu = \sum_{j=1}^M b_j \delta_{z_j}$. In this case, we can still formulate the problem as a difference of convex functions. This time we write for some $\theta>0$
\[
\inf_{Z} \left\{ \tilde{F}_1(Z) - \tilde{F}_0(Z)\right\}
\]
where
\[
\tilde{F}_0(Z) =  \frac{\|Z\|_b^2}{2}  - \frac{t-1}{t+\theta} \sum_{k=1}^K \lambda_0^k \mr{S}_\varepsilon\left( \sum_{j=1}^M b_j \delta_{z_j}, \nu_0^k \right)\,, \]
\[
\tilde{F}_1(Z) = \frac{\|Z\|_b^2}{2} - \frac{t}{t+\theta}\sum_{l=1}^L \lambda_1^l \mr{S}_\varepsilon \left( \sum_{j=1}^M b_j \delta_{z_j}, \nu_1^l \right)\,.
\]

Let $\alpha$ be the modulus of semi-convexity of the self-transport term as in Lemma \ref{lem:convdebias}. Note that $\tilde{F}_1$ is convex as long as
\[
1 - \frac{t}{t+\theta}\left(1+\frac\alpha2\right) \geq 0 \implies \theta \geq t\frac{\alpha}{2}\,.
\]
This also guarantees that $\tilde{F}_0$ is strongly convex, since for $\theta \geq t\alpha/2$ we have that  
$$
1 - \frac{t-1}{t+\theta}\left(1+\frac\alpha2\right) \geq \frac{1}{t+\theta} \left(1+\frac\alpha2\right) > 0\,.
$$

To implement the entropic version of \eqref{eq:dc}, we only need to compute the (sub)gradients of $\tilde{F}^*_0$ and $\tilde{F}_1$.
Let us start by observing that 
$$
s = \frac{t+\theta}{1+\theta} \implies \frac{s-1}{s} = \frac{t-1}{t+\theta}\,.
$$
Therefore, 
\begin{equation}\label{eq:f0y}
    \tilde{F}^*_0(Y) = \frac{\|Y\|^2_b}{2} + \sup_Z \left\{-\frac{\|Z -Y\|^2_b}{2} +\frac{s-1}{2s} \sum_{q} \lambda_0^k \mr{S}_\varepsilon \left( \sum_j b_j \delta_{z_j} , \nu_0^q\right) \right\}
\end{equation}
Hence $\nabla \tilde{F}^*_0(Y)= Z^*$ is the unique solution to \eqref{eq:f0y}. Note that for $\theta \geq t \alpha/2$, we automatically have $s \leq t^*$, as in equation \eqref{eq:tstar}.

As for the subdifferential of $\tilde{F}_1$, we have
$$
\partial \tilde{F}_1(Z) = Z  - \frac{t}{t+\theta} \left( -\sum_{l=1}^L \lambda_1^l B^l(Z) + B^{\mathrm{self}}(Z)\right) $$
where $B^l(Z)$ is defined as in equation \eqref{eq:partialF1} replacing OT plans with entropic OT plans, and  $B^{\mathrm{self}}(Z)$ corresponds to the self-transport term.

\section{Proof of Proposition \ref{prop:dcconv}} \label{app:conv}

Using Fenchel duality and the strong convexity of $\tilde{F}_0$, we have
\[
\begin{aligned}
\tilde{F}_0(Z^n) -F_1(Z^n) &=  \langle Z^n,Y^n\rangle_b -\tilde{F}^*_0(Y^n) - F_1(Z^n) \\&\leq \langle Z^n,Y^n\rangle_b -\tilde{F}^*_0(Y^n) - F_1(Z^{n-1}) - \langle Y^n, Z^n - Z^{n-1}\rangle_b \\ &=  -\tilde{F}_0^*(Y^n) - F_1(Z^{n-1}) + \langle Y^n, Z^{n-1}\rangle_b \\ 
& \leq \tilde{F}_0(Z^{n-1})- F_1(Z^{n-1}) - \frac{t}{2}\|Z^n-Z^{n-1}\|^2_b\,.
\end{aligned}
\]
From this and the lower bound in the proof of Lemma \ref{lem:KtoLexistence} we deduces that the iterates $Z^n$ are bounded and $Z^n -Z^{n+1} \rightarrow 0$. By \eqref{eq:partialF1}, $(Y^n)_n$ is also bounded. For any convergent subsquence $(Z^{n_k})_k$, up to the extraction of a further subsequence we can assume that $(Y^{n_k+1})_k$ converges as well. The limit $Y^*$ and $Z^*$ satisfy
\[
Y^* \in \partial F_1(Z^*) \,, \quad Z^* \in \partial \tilde{F}_0^*(Y^*)\,,
\]
which are the first-order optimality conditions of the problem. In particular, $\mu = \sum_j b_j \delta_{z_j^*}$ satisfies the necessary optimality conditions in Proposition \ref{prop:crit}.

\section{Counterexample to uniqueness}\label{app:counter}

Let us consider the following setting
\[
\frac{\ed\nu_0}{\ed x} \sim \sum_{i=1}^4 \mc{X}_{B(p_i,1)}, \quad
\frac{\ed\nu_1^l}{\ed x} \sim \sum_{i=1}^2 \mc{X}_{B(q_i^l,\varepsilon)}
\]
where $p_1 =(1,1)$, $p_2=(1,-1)$, $p_3 = (-1,-1)$, $p_4=(-1,1)$, 
$q_1^1 = (d,0)$, $q_2^1=(-d,0)$, $q_1^2 = (0,d)$, $q_2^2 =(0,-d)$, 
with $d>1$ and $\varepsilon\in(0,1/2)$; see Figure \ref{fig:nonunique} for an illustration.

Suppose that $\mu$ is the unique solution of the problem. By symmetry, this needs 
to be invariant by rotations of $\pi/2$ around the origin. Furthermore, by 
Proposition \ref{prop:crit}, $\mu$ is also the solution of problem 
\eqref{eq:Kto1} with $\nu_1=\overline{\nu}_1$ being a linearized barycenter of 
$\nu_1^1$ and $\nu_1^2$ with base $\mu$. This means that the support of 
$\overline{\nu}_1$ is contained in the Minkowski average of the supports of 
$\nu_1^1$ and $\nu_1^2$. Again by symmetry, we deduce that 
$\overline{\nu}_1$ is necessarily of the form
\[
\overline{\nu}_1 = \frac{1}{4} \left( \overline{\nu} 
+ R^{\pi/2}_\# \overline{\nu}
+ R^{\pi}_\# \overline{\nu}
+ R^{3\pi/2}_\# \overline{\nu} \right)
\]
with
\[
\mr{supp}(\overline{\nu}) \subseteq 
\overline{B\left( \frac{q_1^1+q_2^2}{2},\varepsilon\right)}.
\]
From this and the properties of the metric extrapolation problem 
\cite{gallouet2025metric}, for $t$ sufficiently large and $\varepsilon$ sufficiently small, one can deduce that the 
extrapolation from $\nu_0$ to $\overline{\nu}_1$ must take the form
\[
\mu = \frac{1}{4}(\delta_{(a,a)} + \delta_{(a,-a)}
+\delta_{(-a,-a)}+\delta_{(-a,a)}) .
\]
We compute
\[
W_2^2(\mu,\nu_0)=2a^2-4a+\frac52,
\]
and
\[
W_2^2(\mu,\nu_1^1)
=
W_2^2(\mu,\nu_1^2)
=
2a^2
-2a\left(d+\frac{4\varepsilon}{3\pi}\right)
+d^2+\frac{\varepsilon^2}{2}.
\]
Therefore after minimization in $a$,
\begin{multline*}
t(W_2^2(\mu,\nu_1^1)+W_2^2(\mu,\nu_1^2))
-(t-1)W_2^2(\mu,\nu_0)
\\
=
2td^2+t\varepsilon^2-\frac52(t-1)
-\frac{
\left[
-4t\left(d+\frac{4\varepsilon}{3\pi}\right)
+4(t-1)
\right]^2
}{8(t+1)} .
\end{multline*}
To show that $\mu$ is not a minimizer, and hence that the minimizers of the 
signed barycenter problem may not be unique, we consider the competitor
\[
\overline{\mu}=\frac12(\delta_{(a,a)}+\delta_{(-a,-a)}).
\]
We compute
\[
W_2^2(\overline{\mu},\nu_0)
=
2a^2-2a-\frac{4a\sqrt2}{3\pi}+\frac52,
\]
and
\[
W_2^2(\overline{\mu},\nu_1^1)
=
W_2^2(\overline{\mu},\nu_1^2)
=
2a^2-2ad+d^2+\frac{\varepsilon^2}{2}.
\]
Therefore after minimizing over $a$,
\begin{multline*}
t(W_2^2(\overline{\mu},\nu_1^1)
+W_2^2(\overline{\mu},\nu_1^2))
-(t-1)W_2^2(\overline{\mu},\nu_0)
\\
=
2td^2+t\varepsilon^2-\frac52(t-1)
-\frac{
\left[
-4td+
2(t-1)\left(1+\frac{2\sqrt2}{3\pi}\right)
\right]^2
}{8(t+1)} .
\end{multline*}

The energy associated with $\mu$ is larger than that associated with 
$\overline{\mu}$ if and only if
\[
\frac{
\left[
-4t\left(d+\frac{4\varepsilon}{3\pi}\right)
+4(t-1)
\right]^2
}{8(t+1)}
\leq
\frac{
\left[
-4td+
2(t-1)\left(1+\frac{2\sqrt2}{3\pi}\right)
\right]^2
}{8(t+1)}.
\]

Since $d>1$ and supposing $t>1$ sufficiently large, this is equivalent to
\[
4t\left(d+\frac{4\varepsilon}{3\pi}\right)-4(t-1)
\leq
4td
-
2(t-1)\left(1+\frac{2\sqrt2}{3\pi}\right),
\]
or equivalently
\[
t\geq
\frac{3\pi-2\sqrt2}
{3\pi-2\sqrt2-8\varepsilon}.
\]

Therefore, for $t$ and $d$ sufficiently large and $\varepsilon$ sufficiently small, signed barycenters are not invariant by 
rotation by $\pi/2$ and hence they cannot be unique.
Note that this also shows that the minimizers do not form a convex set. In fact, if this was the case, we could always construct from any minimizer $\mu$ a rotation-invariant one by taking $(\mu + R^{\pi/2}_\# \mu+ R^{\pi}_\# \mu + R^{3\pi/2}_\# \mu)/4$.
\bibliographystyle{plain}
\bibliography{refs}   

\end{document}